\newtheorem{lemma}{LEMMA}[section]
\newtheorem{theorem}[lemma]{THEOREM}
\newtheorem{corollary}[lemma]{COROLLARY}
\newtheorem{remark}[lemma]{REMARK}
 \numberwithin{equation}{section}
\begin{document}

\bgroup


\newcommand{\rd}{\color{red}}

\newcommand{\rc}{\color{red}}

\newcommand{\RD}{\color{red}}

\newcommand{\bl}{\color{blue}}
\newcommand{\cn}{\color{cyan}}
\renewcommand{\cn}{}

\newcommand{\mg}{\color{magenta}}
\renewcommand{\mg}{}
\newcommand{\bn}{\color[rgb]{0.5,0.25,0.0}}

\newcommand{\gr}{\color[rgb]{0.0,0.45,0.0}}
\renewcommand{\gr}{}

\newcommand{\A}{{\cal A}}

\newcommand{\D}{{\cal D}}

\newcommand{\E}{{\cal E}}

\newcommand{\F}{{\cal F}}

\newcommand{\G}{{\cal G}}

\newcommand{\J}{{\cal J}}

\renewcommand{\L}{{\cal L}}

\newcommand{\Q}{{\cal Q}}

\newcommand{\C}{{\mathbb C}}
\newcommand{\R}{{\mathbb R}}
\newcommand{\N}{{\mathbb N}}

\newcommand{\T}{{\mathbb T}}
\newcommand{\Z}{\mathbb{Z}}

\newcommand{\Sc}{{\mathcal S}}

\newcommand{\sZ}{\sum_{\xi\in\Z^n}}
\renewcommand{\div}{{\rm div}}

\newcommand{\bs}{\boldsymbol}

\newcommand{\uxi}{\allmodesymb{\greeksym}{x}}

\newcommand{\mycomment}[1]{}

\allowdisplaybreaks

\title{Stationary Anisotropic Stokes, Oseen, and Navier-Stokes Systems: Periodic Solutions in $\R^n$}




\author{Sergey E. Mikhailov\footnote{
e-mail: {\sf sergey.mikhailov@brunel.ac.uk} 
}\\
Department of Mathematics,
Brunel University London,
UK}

\maketitle

\abstract{First, the solution uniqueness, existence and regularity for stationary anisotropic (linear) Stokes and generalised Oseen systems with constant viscosity coefficients in a compressible framework  are  analysed in a range of periodic Sobolev (Bessel-potential) spaces in $\mathbb R^n$. By the Galerkin algorithm and the Brower fixed point theorem, the existence of solution to the stationary anisotropic (non-linear) Navier-Stokes incompressible system is shown in a periodic Sobolev space for any $n\ge 2$. Then the solution uniqueness and regularity results for stationary anisotropic periodic Navier-Stokes system are established for  $n\in\{2,3,4\}$.
}

\paragraph{MSC-class:}	35J57, 35Q30, 46E35, 76D, 76M
\paragraph{Keywords:} anisotropic Stokes, Oseen, and Navier-Stokes equations; relaxed ellipticity; periodic Sobolev spaces; existence, uniqueness, and regularity;  higher dimensions

\section{Introduction}\label{sec:mik1}
Analysis of Stokes and Navier-Stokes equations is an established and active field of research in the applied mathematical analysis, see, e.g., \cite{Constantin-Foias1988, Galdi2011, RRS2016, Seregin2015, Sohr2001, Temam1995, Temam2001} and references therein.
In \cite{KMW2020, KMW-DCDS2021, KMW-LP2021, KMW-transv2021,KMW2022b} this field has been extended to the transmission and boundary-value problems for stationary Stokes and Navier-Stokes equations of anisotropic fluids, particularly, with relaxed ellipticity condition on the viscosity tensor. 

In this paper, we present some further results in this direction considering space-periodic solutions in $\R^n$, $n\ge 2$, to the stationary Stokes, generalised Oseen and Navier-Stokes equations of anisotropic fluids, with an emphasis on solution regularity.  
The periodic setting is interesting on its own, modelling fluid flow in periodic composite structures, and is also a common element of homogenisation theories for inhomogeneous fluids.
First, the solution uniqueness, existence and regularity for stationary anisotropic (linear) Stokes and generalised Oseen systems with constant viscosity coefficients in a compressible framework  are  analysed in a range of periodic Sobolev (Bessel-potential) spaces on $n$-dimensional flat torus. 
By the Galerkin algorithm, the linear results are employed to show existence of solution to the stationary anisotropic (non-linear) Navier-Stokes incompressible system  on torus in a periodic Sobolev space for any $n\ge 2$.
Then the solution uniqueness and regularity results for stationary anisotropic Navier-Stokes system on torus are established for  $n\in\{2,3,4\}$.
This paper, particularly, extends to the Oseen system and to the Navier-Stokes system for $n>3$ the results obtained in our paper \cite{Mikhailov2022}.

\section{Anisotropic Stokes, Navier-Stokes and Oseen systems}\label{sec:mik2}
Let 
$\boldsymbol{\mathfrak L}$ denote the second-order differential operator represented in the component-wise divergence form as
\begin{align}\label{lllE}
&(\boldsymbol{\mathfrak L}{\mathbf u})_k:=
\partial _\alpha\big(a_{kj}^{\alpha \beta }E_{j\beta }({\mathbf u})\big),\ \ k=1,\ldots ,n,
\end{align}
were ${\mathbf u}\!=\!(u_1,\ldots ,u_n)^\top$, $E_{j\beta }({\mathbf u})\!:=\!\frac{1}{2}(\partial _ju_\beta +\partial _\beta u_j)$ are the entries of the symmetric part ${\mathbb E}({\mathbf u})$ of $\nabla {\mathbf u}$ (the gradient of ${\mathbf u}$),
and $a_{kj}^{\alpha \beta }$ are constant components of the  viscosity coefficient tensor
${\mathbb A}:=\!\left({a_{kj}^{\alpha \beta }}\right)_{1\leq i,j,\alpha ,\beta \leq n}$, cf. \cite{Duffy1978}.
{\mg We also denoted $\partial_j:=\dfrac{\partial}{\partial x_j}$.}
Here and further on, the Einstein summation convention in repeated indices from $1$ to $n$ is used unless stated otherwise.

The following symmetry conditions are assumed (see \cite[(3.1),(3.3)]{Oleinik1992}),
\begin{align}
\label{eq:mik1}
a_{kj}^{\alpha \beta }=a_{\alpha j}^{k\beta }=a_{k\beta }^{\alpha j}.
\end{align}

In addition, we require that tensor ${\mathbb A}$ satisfies the (relaxed) ellipticity condition  in terms of all {\it symmetric} matrices in ${\mathbb R}^{n\times n}$ with {\it zero matrix trace}, see \cite{KMW-DCDS2021,KMW-LP2021}. 
That is, we assume that there exists a constant $C_{\mathbb A} >0$ such that, 
\begin{align}
\label{eq:mik2}
{\mg C_{\mathbb A} a_{kj}^{\alpha \beta }\zeta _{k\alpha }\zeta _{j\beta }\geq |\bs\zeta|^2}\,,
\ \ &\forall\ \bs\zeta =(\zeta _{k\alpha })_{k,\alpha =1,\ldots ,n}\in {\mathbb R}^{n\times n}\nonumber\\
&\mbox{ such that }\, \bs\zeta=\bs\zeta^\top \mbox{ and }
\sum_{k=1}^n\zeta _{kk}=0,
\end{align}
where $|\bs\zeta |^2=\zeta _{k\alpha }\zeta _{k\alpha }$, while the superscript $\top $ denotes the transpose of a matrix.

The tensor ${\mathbb A}$  is endowed with the norm
\begin{align}\label{TensNorm}
\|{\mathbb A}\|:=\max\left\{|a_{kj}^{\alpha \beta }|:k,j,\alpha ,\beta =1\ldots ,n\right\}\,.
\end{align}
Symmetry conditions \eqref{eq:mik1} lead to the following equivalent form of the operator $\boldsymbol{\mathfrak L}$
\begin{equation}
\label{eq:mik3}
\begin{array}{lll}
(\boldsymbol{\mathfrak L}{\mathbf u})_k=\partial _\alpha\big(a_{kj}^{\alpha \beta }\partial _\beta u_j\big),\ \ k=1,\ldots ,n.
\end{array}
\end{equation}

Let ${\mathbf u}$ be an unknown vector field, $p $ be an unknown scalar field, ${\mathbf f}$ be a given vector field and $g$ be a given scalar field.
Then the linear equations
\begin{equation}
\label{eq:mik5}
\begin{array}{lll}
-\boldsymbol{\mathfrak L}{\mathbf u}+\nabla p={\mathbf f},\ {\rm{div}}\ {\mathbf u}=g 
\end{array}
\end{equation}
determine the {\it anisotropic stationary Stokes system with viscosity tensor coefficient ${\mathbb A}=\left(A^{\alpha \beta }\right)_{1\leq \alpha ,\beta \leq n}$ in a compressible framework}.

The nonlinear system
\begin{align}
\label{eq:mik6}
-\boldsymbol{\mathfrak L}{\mathbf u}+\nabla p+{({\mathbf u}\cdot \nabla ){\mathbf u}}={\mathbf f}\,, \ \ {\rm{div}} \, {\mathbf u}=g  
\end{align}
is called the {\it anisotropic stationary Navier-Stokes system with viscosity tensor coefficient ${\mathbb A}\!=\!\left(A^{\alpha \beta }\right)_{1\leq \alpha ,\beta \leq n}$ in a compressible framework}.

In addition, we will also consider a linearised version of the anisotropic stationary Navier-Stokes system \eqref{eq:mik6},
\begin{align}
\label{eq:mik6T}
-\boldsymbol{\mathfrak L}{\mathbf u}+\nabla p+{({\mathbf U}\cdot \nabla ){\mathbf u}}={\mathbf f}\,, \ \ {\rm{div}} \, {\mathbf u}=g,
\end{align}
where $\mathbf U$ is a given function.
We will call \eqref{eq:mik6T} the stationary {\mg anisotropic} Oseen system.
If ${\mathbf U}\equiv 0$, then \eqref{eq:mik6T} reduces to the Stokes system  \eqref{eq:mik6}.

If $g=0$ in \eqref{eq:mik5}, \eqref{eq:mik6}, and \eqref{eq:mik6T}, then these equations are reduced, respectively, to the {\it incompressible anisotropic stationary Stokes, Navier-Stokes, and Oseen systems}.

In the {\it isotropic case}, the tensor ${\mathbb A}$ reduces to
\begin{align}
\label{eq:mik7}
a_{kj}^{\alpha \beta}=\lambda \delta _{k\alpha }\delta _{j\beta }+\mu \left(\delta_{\alpha j}\delta _{\beta k}+\delta_{\alpha \beta }\delta _{kj}\right),\ 1\leq i,j, \alpha ,\beta \leq n\,,
\end{align}
where $\lambda$ and $\mu$ are real constant parameters with
$\mu>0$ (cf., e.g., Appendix III, Part I, Section 1 in \cite{Temam2001})
{\mg and $\delta _{k\alpha }$ is the Kronecker symbol.
Then} \eqref{eq:mik3} becomes
\begin{equation}
\label{eq:mik8}
\boldsymbol{\mathfrak L}{\mathbf u}
=(\lambda +\mu)\nabla{\rm div}\,\mathbf u+\mu\Delta \mathbf u.
\end{equation}
Then it is immediate that condition \eqref{eq:mik2} is fulfilled (cf. \cite{KMW-LP2021}) and thus our results apply also to the Stokes, Navier-Stokes, and Oseen systems in the {\it isotropic case}. 
Assuming $\lambda=0$, $\mu=1$ we arrive at the classical mathematical formulations of isotropic Stokes, Navier-Stokes, and Oseen systems.

\section{Some periodic function spaces}\label{sec:mik3}

Let us introduce some function spaces on torus and periodic function spaces (see, e.g., 
\cite[p.26]{Agmon1965},  \cite{Agranovich2015}, \cite{McLean1991}, \cite[Chapter 3]{RT-book2010}, \cite[Section 1.7.1]{RRS2016}  
\cite[Chapter 2]{Temam1995}, 
for more details).

Let $n\ge 1$  be an integer and $\T$ be the $n$-dimensional flat torus that can be parametrized as the semi-open cube $\T= [0,1)^n\subset\R^n$, cf.  \cite[p. 312]{Zygmund2002}.
In what follows, ${\mathcal D}(\T)=\mathcal C^\infty(\T)$ denotes the space of infinitely smooth real or complex functions on the torus.
As usual, $\N$ denotes the set of natural numbers,  $\N_0$ the set of natural numbers augmented by 0, and $\mathbb{Z}$ the set of integers.

Let   $\bs\xi \in \mathbb{Z}^n$ denote the $n$-dimensional vector with integer components. 
We will further need also the set 
$$\dot\Z^n:=\Z^n\setminus\{\mathbf 0\}.$$
Extending the torus parametrisation to $\R^n$, it is often useful to identify $\T$ with the quotient space $\R^n\setminus \Z^n$. 
Then the space of functions $\mathcal C^\infty(\T)$ on the torus can be identified with the space of $\T$-periodic (1-periodic) functions 
$\mathcal C^\infty_\#=\mathcal C^\infty_\#(\R^n)$ that consists of functions $\phi\in \mathcal C^\infty(\R^n)$ such that
\begin{align}
\mg\label{E3.1}
\phi(\mathbf x+\bs\xi)=\phi(\mathbf x)\quad \forall\,  \bs\xi \in \mathbb{Z}^n,\ \forall\,\mathbf x\in\R^n.
\end{align}
Similarly, the Lebesgue space on the torus $L_{p}(\T)$, $1\le p\le\infty$,  can be identified with the periodic Lebesgue space $L_{p\#}=L_{p\#}(\R^n)$ that consists of functions $\phi\in L_{p,\rm loc}(\R^n)$, which satisfy the periodicity {\mg condition \eqref{E3.1}} (for a.e.~$\mathbf x\in\R^n$).

The space dual  to $\mathcal D(\T)$, i.e.,  the space of linear bounded functionals on $\mathcal D(\T)$,  called the space of torus distributions, is denoted by $\mathcal D'(\T)$ and can be identified with the space of periodic distributions $\mathcal D'_\#$ acting on $\mathcal C^\infty_\#$.

The toroidal/periodic Fourier transform 
mapping a  function $g\in \mathcal C_\#^\infty$ to a set of its Fourier coefficients $\hat g$ is defined as (see, e.g., \cite[Definition 3.1.8]{RT-book2010})  
\begin{align*}
 \hat g(\bs\xi)=[\F_{\T} g](\bs\xi):=\int_{\T}e^{-2\pi i \mathbf x\cdot\bs\xi}g(\mathbf x)d\mathbf x,\quad \bs\xi\in\Z^n.
\end{align*}
and can be generalised to  the Fourier transform acting on a distribution $g\in\mathcal D'_\#$.

For any $\bs\xi\in\Z^n$, let $|\bs\xi|:=(\sum_{j=1}^n \xi_j^2)^{1/2}$ be the Euclidean norm in $\Z^n$ and let us denote $$\rho(\bs\xi):=(1+|\bs\xi|^2)^{1/2}.$$
Evidently,
\begin{align}\label{eq:mik9}
\frac{1}{2}\rho(\bs\xi)^2\le |\bs\xi|^2\le \rho(\bs\xi)^2\quad\forall\,\bs\xi\in \dot\Z^n.
\end{align}

Similar to \cite[Definition 3.2.2]{RT-book2010}, for $s\in\R$ we define the {\em periodic/toroidal Sobolev (Bessel-potential) spaces} $H_\#^s:=H_\#^s(\R^n):=H^s(\T)$, 
which consist of the torus distributions $ g\in\mathcal D'(\T)$, for which the norm
\begin{align}\label{eq:mik10}
\| g\|_{H_\#^s}:=\| \rho^s\widehat g\|_{\ell_2}:=\left(\sZ\rho(\bs\xi)^{2s}|\widehat g(\bs\xi)|^2\right)^{1/2}
\end{align}
is finite, i.e., the series in \eqref{eq:mik10} converges.
Here $\| \cdot\|_{\ell_2}$ is the standard norm in the space of square summable sequences.
By \cite[Proposition 3.2.6]{RT-book2010}, $H_\#^s$ are Hilbert spaces.

The dual product between $g\in H_\#^s$ and $f\in H_\#^{-s}$, $s\in\R$, is defined (cf. \cite[Definition 3.2.8]{RT-book2010}) as
\begin{align}\label{E3.3}
\langle g,f\rangle_{\T}:=\langle \hat g,\hat f\rangle_{\Z^n}:=\sum_{\bs\xi\in\Z^n}\hat g(\bs\xi)\hat f(-\bs\xi).
\end{align}
If $s=0$, i.e., $g,f\in L_{2\#}$, then \eqref{E3.3} reduces to
$$
\langle g,f\rangle_{\T}=\int_{\T}g(\mathbf x)f(\mathbf x)d\mathbf x.
$$
Hence for any $s\in\R$, the space  $H_\#^{-s}$ is adjoint (dual) to  $H_\#^{s}$, i.e., $H_\#^{-s}=(H_\#^{s})^*$.
Similar to, e.g., \cite[p.76]{McLean2000} one can show that 
\begin{align}\label{eq:mik10f}
\| g\|_{H_\#^s}=\sup_{f\in H_\#^{-s}, f\ne 0}\frac{|\langle g,f\rangle_{\T}|}{\|f\|_{H_\#^{-s}}}.
\end{align}

For $g\in H_\#^s$, $s\in\R$, and  $m\in\N_0$, let us consider the partial sums 
$$g_m(\mathbf x)=\sum_{\bs\xi\in\Z^n, |\bs\xi|\le m}\hat g(\bs\xi)e^{2\pi i \mathbf x\cdot\bs\xi}.$$ 
 Evidently, $g_m\in \mathcal C_\#^\infty$, $\hat g_m(\bs\xi)=\hat g(\bs\xi)$ if $|\bs\xi|\le m$ and  $\hat g_m(\bs\xi)=0$ if $|\bs\xi|> m$.
This implies that $\|g-g_m\|_{H_\#^s}\to 0$ as $m\to\infty$ and hence we can write 
\begin{align}\label{eq:mik11}
g(\mathbf x)=\sum_{\bs\xi\in\Z^n}\hat g(\bs\xi)e^{2\pi i \mathbf x\cdot\bs\xi},
\end{align}
where the Fourier series converges in the sense of norm \eqref{eq:mik10}.
Moreover, since $g$ is an arbitrary distribution from $H_\#^s$, this also implies that the space ${\mathcal C}^\infty_\#$ is dense in $H_\#^s$ for any $s\in\R$ (cf. \cite[Exercise 3.2.9]{RT-book2010}).

There holds the compact embedding $H_\#^t\hookrightarrow H_\#^s$ if $t>s$,  embeddings $H_\#^s\subset \mathcal C_\#^m$ if $m\in\N_0$, $s>m+n/2$, and moreover, $\bigcap_{s\in\R}H_\#^s={\mathcal C}^\infty_\#$ (cf. \cite[Exercises 3.2.10, 3.2.10, and Corollary 3.2.11]{RT-book2010}). Note also that the {\cn periodic} norms on $H_\#^s$ are equivalent to the corresponding standard (non-periodic) Bessel potential norms on $\T$ as a cubic domain, see, e.g., \cite[Section 13.8.1]{Agranovich2015}.

By \eqref{eq:mik10}, 
$\| g\|^2_{H_\#^s}=|\widehat g(\mathbf 0)|^2 +| g|^2_{H_\#^s},$ 
where
\begin{align*}
| g|_{H_\#^s}:=\| \rho^s\widehat g\|_{\dot\ell_2}:=\left(\sum_{\bs\xi\in\dot\Z^n}\rho(\bs\xi)^{2s}|\widehat g(\bs\xi)|^2\right)^{1/2}
\end{align*}
is the seminorm in $H_\#^s$.

For any $s\in\R$, let us also introduce  the space
\begin{align}\label{E3.6}
\dot H_\#^s:=\{g\in H_\#^s: \langle g,1\rangle_{\T}=0\}.
\end{align}
The definition implies that if $g\in \dot H_\#^s$, then $\widehat g(\mathbf 0)=0$ and 
\begin{align}\label{eq:mik12}
\| g\|_{\dot H_\#^s}=\| g\|_{H_\#^s}=| g|_{H_\#^s}=\| \rho^s\widehat g\|_{\dot\ell_2}\ .
\end{align}
Denoting 
$
\dot {\mathcal C}^\infty_\#:=\{g\in {\mathcal C}^\infty_\#: \langle g,1\rangle_{\T}=0\}
$,
then $\bigcap_{s\in\R}\dot H_\#^s=\dot {\mathcal C}^\infty_\#$.

Definition \eqref{E3.6} also implies that the space adjoint to $\dot H_\#^s$ can be expressed as the quotient space,
$$
(\dot H_\#^s)^*=(H_\#^s)^*/\R=H_\#^{-s}/\R.
$$
Identifying the quotient space $H_\#^{-s}/\R$ with the space $\dot H_\#^{-s}$, we then identify the space $\dot H_\#^{-s}$ with the space dual to $\dot H_\#^s$.

The corresponding spaces of $n$-component vector functions/{\mg distributions} are denoted as $\mathbf L_{q\#}:=(L_{q\#})^n$, $\mathbf H_\#^s:=(H_\#^s)^n$, etc.

Note that  the norm $\|\nabla (\cdot )\|_{{\mathbf H}_\#^{s-1}}$
is an equivalent norm in $\dot H_\#^s$. 
Indeed, by \eqref{eq:mik11}
\begin{align*}
\nabla g(\mathbf x)=2\pi i\sum_{\bs\xi\in\dot\Z^n}\bs\xi e^{2\pi i \mathbf x\cdot\bs\xi}\hat g(\bs\xi),\quad
\widehat{\nabla g}(\bs\xi)=2\pi i\bs\xi \hat g(\bs\xi)\quad \forall\,g\in \dot H_\#^s,
\end{align*}
and 
then \eqref{eq:mik9} and \eqref{eq:mik12} imply that
\begin{multline}\label{eq:mik13}
2\pi^2\|g\|^2_{H_\#^s}=2\pi^2\|g\|^2_{\dot H_\#^s}=2\pi^2|g|^2_{H_\#^s}\le \| \nabla g\|^2_{{\mathbf H}_\#^{s-1}}\\
\le 4\pi^2|g|^2_{H_\#^s}=4\pi^2\|g\|^2_{\dot H_\#^s}=4\pi^2\| g\|^2_{H_\#^s} \quad \forall\,g\in \dot H_\#^s.
\end{multline}
The vector counterpart of \eqref{eq:mik13} takes the form
\begin{multline}\label{eq:mik14}
2\pi^2\| \mathbf v\|^2_{{\mathbf H}_\#^s}=2\pi^2\| \mathbf v\|^2_{\dot{\mathbf H}_\#^s}
\le \| \nabla \mathbf v\|^2_{(H_\#^{s-1})^{n\times n}}\\
\le 4\pi^2\| \mathbf v\|^2_{\dot{\mathbf H}_\#^s}=4\pi^2\| \mathbf v\|^2_{{\mathbf H}_\#^s} \quad \forall\,\mathbf v\in \dot {\mathbf H}_\#^s.
\end{multline}

We will further need the first Korn inequality
\begin{align}
\label{eq:mik15}
\|\nabla {\bf v}\|^2_{(L_{2\#})^{n\times n}}\leq 2\|\mathbb E ({\bf v})\|^2_{(L_{2\#})^{n\times n}}\quad\forall\, \mathbf v\in {\mathbf H}_\#^1 
\end{align}
that can be easily proved by adapting, e.g., the proof in \cite[Theorem 10.1]{McLean2000} to the periodic Sobolev space; cf. also \cite[Theorem 2.8]{Oleinik1992}.

Let us also define the Sobolev spaces of divergence-free functions and distributions,
\begin{align}\label{E3.8}
\dot{\mathbf H}_{\#\sigma}^{s}
&:=\left\{{\bf w}\in\dot{\mathbf H}_\#^{s}:{\div}\, {\bf w}=0\right\},\quad s\in\R,
\end{align}
endowed with the same norm \eqref{eq:mik10}.
Similarly, ${\mathbf C}^\infty_{\#\sigma}$ and $\mathbf L_{q\#\sigma}$ denote the subspaces of divergence-free vector-functions from 
${\mathbf C}^\infty_{\#}$ and $\mathbf L_{q\#}$, respectively, etc.

\section{Stationary anisotropic periodic Stokes system}\label{sec:mik4}
Let $n\ge 2$. In this section, we generalise to the isotropic and anisotropic (linear) Stokes systems in compressible framework and to a range of Sobolev spaces the analysis available in \cite[Section 2.2]{Temam1995}.

For the unknowns $({\mathbf u},p )\in \dot{\mathbf H}_\#^s\times \dot H_\#^{s-1}$ and the given data
$({\mathbf f},g)\in\dot{\mathbf H}_\#^{s-2}\times \dot H_\#^{s-1}$, $s\in\R$, let us consider the Stokes system
\begin{align}
\label{eq:mik16}
-\bs{\mathfrak L}{\mathbf u}+\nabla p &=\mathbf{f},\\
\label{eq:mik17}
{\rm{div}}\, {\mathbf u}&=g,
\end{align}
that should be understood in the sense of distributions, i.e.,
\begin{align}
\label{eq:mik18}
\langle-\bs{\mathfrak L}{\mathbf u}+\nabla p, \bs\phi\rangle_{\T}&=\langle\mathbf{f}, \bs\phi\rangle_{\T}\quad\forall\,\bs\phi\in {\mg{\mathbf C}^\infty_\#},\\
\label{eq:mik19}
\langle{\rm{div}}\, {\mathbf u}, \phi\rangle_{\T}&=\langle g, \phi\rangle_{\T}\quad\forall\,\phi\in {\mathcal C}^\infty_\#.
\end{align}
For $\bs\xi\in \dot\Z^n$, let us employ  $\bar e_{\bs\xi}(\mathbf x)=e^{-2\pi i x\cdot\bs\xi}$ as $\phi$ in \eqref{eq:mik19} and $\bar e_{\bs\xi}(\mathbf x)$, multiplied by the {\mg unit} coordinate vector, as $\bs\phi$ in \eqref{eq:mik18}. 
Then recalling \eqref{eq:mik3}, we arrive for each $\bs\xi\in \dot\Z^n$ at the following algebraic system for the Fourier coefficients, $\hat u_j(\bs\xi)$, $k=1,2,\dots,n$, and $\hat p(\bs\xi)$.
\begin{align}
\label{eq:mik20}
\hspace{-1.5em}4\pi^2\xi_\alpha a_{kj}^{\alpha \beta}\xi_\beta \hat u_j(\bs\xi) + 2\pi i\xi_k\hat p(\bs\xi)
&=\hat{f}_k(\bs\xi) \quad\forall\,\bs\xi\in \dot\Z^n,\  k=1,2,\dots,n\\
\label{eq:mik21}
2\pi i\xi_j\hat{u}_j(\bs\xi)&=\hat g(\bs\xi) \quad\ \forall\,\bs\xi\in \dot\Z^n.
\end{align}

System \eqref{eq:mik20}-\eqref{eq:mik21} can be written in the form
\begin{align}\label{E4.7}
\mathfrak S(\bs\xi)\left(\widehat{\mathbf u}(\bs\xi) \atop \hat p(\bs\xi)\right)=\left(\widehat{\mathbf f}(\bs\xi) \atop \hat g(\bs\xi)\right) \quad\forall\,\bs\xi\in \dot\Z^n,
\end{align}
where $\mathfrak S(\bs\xi)$ is the $(n+1) \times (n+1)$ matrix with entries 
\begin{align}
\label{ADN-elliptic}
\mathfrak S_{\ell j}(\boldsymbol\xi )=\begin{cases}
4\pi^2\xi _\alpha a_{\ell j}^{\alpha \beta }\xi _\beta,  &  \ell, j=1,\ldots ,n;\\
-2\pi i\xi _\ell, & \ell= 1,\ldots ,n,\ j=n+1;\\
-2\pi i\xi _j,  & \ell=n+1,\ j=1,\ldots ,n;\\
0,& \ell =j=n+1.
\end{cases}
\end{align}
Here $i^2=-1$, and $\boldsymbol\xi =(\xi _1,\ldots,\xi _n)$.
The matrix $\mathfrak S(\bs\xi)$,  is in fact the toroidal/periodic symbol (cf. \cite[Section 4.1.1]{RT-book2010}) of the anisotropic Stokes system \eqref{eq:mik16}-\eqref{eq:mik17}.

\begin{lemma}
\label{ADN-system-ext}
Let $n\ge 2$ and condition \eqref{eq:mik2} hold. Then  the matrix $\mathfrak S(\bs\xi)$ is non-singular for any $\bs\xi\in \dot\Z^n$ and hence the solution of the algebraic system \eqref{eq:mik20}-\eqref{eq:mik21} can be represented in terms of the inverse matrix $\mathfrak S^{-1}(\bs\xi)$ as 
\begin{align}\label{eq:mik22}
\left(\widehat{\mathbf u}(\bs\xi) \atop \hat p(\bs\xi)\right)=\mathfrak S^{-1}(\bs\xi)\left(\widehat{\mathbf f}(\bs\xi) \atop \hat g(\bs\xi)\right) \quad\forall\,\bs\xi\in \dot\Z^n.
\end{align}
Moreover,  the following estimates hold,
\begin{align}
\label{eq:mik23}
&|\widehat{\mathbf u}(\bs\xi)|\le \widehat C_{uf}\frac{|\widehat{\mathbf f}(\bs\xi)|}{|2\pi\bs\xi|^2}
+\widehat C_{ug}\frac{|\hat{g}(\bs\xi)|}{2\pi|\bs\xi|},\\
\label{eq:mik24}
&
|\hat p(\bs\xi)|\le \widehat C_{pf}\frac{|\widehat{\mathbf f}(\bs\xi)|}{2\pi|\bs\xi|}
+\widehat C_{pg}|\hat{g}(\bs\xi)|\quad\forall\,\bs\xi\in \dot\Z^n,
\end{align}
where
$$
 \widehat C_{uf}=2C_{\mathbb A},\ 
\widehat C_{ug}=\widehat C_{pf}=1+2C_{\mathbb A}\|\mathbb A\|,\ 
\widehat C_{pg}=\|\mathbb A\|(1+2C_{\mathbb A}\|\mathbb A\|).
$$
\end{lemma}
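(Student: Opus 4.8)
The plan is to exploit the saddle--point (divergence) structure of \eqref{eq:mik20}--\eqref{eq:mik21} together with the relaxed ellipticity \eqref{eq:mik2}. I write the upper--left $n\times n$ block of $\mathfrak S(\bs\xi)$ as the matrix $M(\bs\xi)$ with entries $M_{kj}(\bs\xi):=4\pi^2\xi_\alpha a_{kj}^{\alpha\beta}\xi_\beta$, so that \eqref{eq:mik20}--\eqref{eq:mik21} become $M(\bs\xi)\widehat{\mathbf u}+2\pi i\bs\xi\,\hat p=\widehat{\mathbf f}$ and $2\pi i\,\bs\xi\cdot\widehat{\mathbf u}=\hat g$. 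The guiding observation is that $\overline{\mathbf v}\cdot M(\bs\xi)\mathbf v=4\pi^2 a_{kj}^{\alpha\beta}\xi_\alpha\xi_\beta v_j\overline{v_k}$ is a contraction of $\mathbb A$ against the rank--one matrix $\mathbf v\otimes\bs\xi$, and by the symmetries \eqref{eq:mik1} only its symmetric part $\mathrm{sym}(\mathbf v\otimes\bs\xi)$ enters; the trace of that part equals $\bs\xi\cdot\mathbf v$, which is exactly what the incompressibility constraint controls. This is what will make \eqref{eq:mik2}, valid only on \emph{trace-free symmetric} matrices, applicable.

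\textbf{Non-singularity.} I would prove that the homogeneous system ($\widehat{\mathbf f}=\mathbf 0$, $\hat g=0$) has only the trivial solution. Taking the Hermitian product of the momentum equation with $\widehat{\mathbf u}$ and using $\bs\xi\cdot\widehat{\mathbf u}=0$ kills the pressure term $2\pi i\hat p\,\overline{\bs\xi\cdot\widehat{\mathbf u}}$, leaving $a_{kj}^{\alpha\beta}\xi_\alpha\xi_\beta\widehat u_j\overline{\widehat u_k}=0$. Splitting $\widehat{\mathbf u}=\mathbf a+i\mathbf b$, the real part is $a_{kj}^{\alpha\beta}\xi_\alpha\xi_\beta(a_ja_k+b_jb_k)$, and each summand equals the quadratic form \eqref{eq:mik2} evaluated at $\mathrm{sym}(\mathbf a\otimes\bs\xi)$ and $\mathrm{sym}(\mathbf b\otimes\bs\xi)$, which are symmetric and (since $\bs\xi\cdot\mathbf a=\bs\xi\cdot\mathbf b=0$) trace-free. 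By \eqref{eq:mik2} each summand is nonnegative, so both vanish, forcing $\mathrm{sym}(\mathbf a\otimes\bs\xi)=\mathrm{sym}(\mathbf b\otimes\bs\xi)=\mathbf 0$; since $\bs\xi\neq\mathbf 0$ this yields $\mathbf a=\mathbf b=\mathbf 0$, i.e.\ $\widehat{\mathbf u}=\mathbf 0$, and then the momentum equation gives $2\pi i\bs\xi\,\hat p=\mathbf 0$, hence $\hat p=0$. Thus $\mathfrak S(\bs\xi)$ is invertible and \eqref{eq:mik22} holds.

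\textbf{The estimates.} For the bounds I decompose $\widehat{\mathbf u}=\widehat{\mathbf u}^{\parallel}+\widehat{\mathbf u}^{\perp}$ with $\widehat{\mathbf u}^{\parallel}=\tfrac{\bs\xi\cdot\widehat{\mathbf u}}{|\bs\xi|^2}\bs\xi$. The divergence equation gives $\bs\xi\cdot\widehat{\mathbf u}=\hat g/(2\pi i)$, so $|\widehat{\mathbf u}^{\parallel}|=\tfrac{|\hat g|}{2\pi|\bs\xi|}$, which is precisely the coefficient--one part of \eqref{eq:mik23}. Testing the momentum equation with $\widehat{\mathbf u}^{\perp}$ eliminates the pressure ($\bs\xi\cdot\overline{\widehat{\mathbf u}^{\perp}}=0$) and leaves $\overline{\widehat{\mathbf u}^{\perp}}\cdot M(\bs\xi)\widehat{\mathbf u}^{\perp}=\widehat{\mathbf f}\cdot\overline{\widehat{\mathbf u}^{\perp}}-\overline{\widehat{\mathbf u}^{\perp}}\cdot M(\bs\xi)\widehat{\mathbf u}^{\parallel}$. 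Using $|\mathrm{sym}(\mathbf c\otimes\bs\xi)|^2=\tfrac12|\mathbf c|^2|\bs\xi|^2$ for $\mathbf c\perp\bs\xi$ together with \eqref{eq:mik2} gives the coercive lower bound
\begin{equation*}
\mathrm{Re}\big(\overline{\widehat{\mathbf u}^{\perp}}\cdot M(\bs\xi)\widehat{\mathbf u}^{\perp}\big)\ \ge\ \frac{2\pi^2|\bs\xi|^2}{C_{\mathbb A}}\,|\widehat{\mathbf u}^{\perp}|^2 .
\end{equation*}
Bounding the right-hand side by $|\widehat{\mathbf f}|\,|\widehat{\mathbf u}^{\perp}|+|M(\bs\xi)\widehat{\mathbf u}^{\parallel}|\,|\widehat{\mathbf u}^{\perp}|$ and using $|M(\bs\xi)\mathbf v|\le 4\pi^2\|\mathbb A\||\bs\xi|^2|\mathbf v|$ from \eqref{TensNorm}, then dividing by $|\widehat{\mathbf u}^{\perp}|$ and inserting $|\widehat{\mathbf u}^{\parallel}|$, produces the perpendicular estimate with factors $2C_{\mathbb A}$ (for $\widehat{\mathbf f}$) and $2C_{\mathbb A}\|\mathbb A\|$ (for $\hat g$); adding $|\widehat{\mathbf u}^{\parallel}|$ by the triangle inequality gives \eqref{eq:mik23} with the stated $\widehat C_{uf},\widehat C_{ug}$. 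For \eqref{eq:mik24} I recover the pressure from the $\bs\xi$-component of the momentum equation, $2\pi i|\bs\xi|^2\hat p=\bs\xi\cdot(\widehat{\mathbf f}-M(\bs\xi)\widehat{\mathbf u})$, whence $|\hat p|\le\tfrac{1}{2\pi|\bs\xi|}(|\widehat{\mathbf f}|+|M(\bs\xi)\widehat{\mathbf u}|)$, and substitute \eqref{eq:mik23} together with the same symbol bound.

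\textbf{Main obstacle.} The delicate points are two. First, the relaxed ellipticity \eqref{eq:mik2} is coercive only on the trace-free cone, so one must reduce the quadratic form to arguments of the form $\mathrm{sym}(\mathbf c\otimes\bs\xi)$ with $\mathbf c\perp\bs\xi$ and pass from the complex $\widehat{\mathbf u}$ to its real and imaginary parts; this is exactly what forces the split along and orthogonal to $\bs\xi$ once $g\neq0$. Second, and harder, is controlling the coupling term $M(\bs\xi)\widehat{\mathbf u}^{\parallel}$, whose argument $\mathbf u^{\parallel}\otimes\bs\xi$ is \emph{not} trace-free and hence lies partly outside the coercive cone; it must be estimated through the symmetries \eqref{eq:mik1} and the tensor norm $\|\mathbb A\|$, and it is the careful bookkeeping of this term (and of the symbol bound in the pressure step) that yields the explicit constants $\widehat C_{ug},\widehat C_{pf},\widehat C_{pg}$.
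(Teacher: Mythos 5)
Your proof is correct, and it reaches the stated constants by a genuinely different route from the paper. The paper first passes to a real system via the substitution $\widehat{\mathbf w}=2\pi\widehat{\mathbf u}$, $\hat q=i\hat p$ (see \eqref{E4.7e}--\eqref{ADN-elliptic-1}) and then invokes the abstract Babu\v{s}ka--Brezzi theorem (Theorem \ref{B-B}) for the bilinear forms \eqref{a-0}--\eqref{b-0}: coercivity of $a_0$ on the subspace $V_{\bs\xi}$ of \eqref{V-0} (with constant $\tfrac12 C_{\mathbb A}^{-1}|\bs\xi|^2$) together with the inf-sup property \eqref{inf-sup-b0} of $b_0$ (with constant $|\bs\xi|$) yields invertibility, and the generic estimates \eqref{mixed-Cu}--\eqref{mixed-Cp}, applied to the real and imaginary parts of the system and combined, produce \eqref{eq:mik23}--\eqref{eq:mik24}. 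You instead argue directly at the linear-algebra level: injectivity of $\mathfrak S(\bs\xi)$ by testing the homogeneous system against $\widehat{\mathbf u}$, and the estimates via the explicit orthogonal splitting $\widehat{\mathbf u}=\widehat{\mathbf u}^{\parallel}+\widehat{\mathbf u}^{\perp}$, where the divergence constraint fixes $|\widehat{\mathbf u}^{\parallel}|=|\hat g|/(2\pi|\bs\xi|)$ exactly and coercivity on the orthogonal complement (both real and imaginary parts of $\widehat{\mathbf u}^{\perp}$ being trace-free directions) controls $\widehat{\mathbf u}^{\perp}$. This is a hands-on re-derivation of what the Babu\v{s}ka--Brezzi machinery gives abstractly --- your parallel/perpendicular split plays precisely the roles of the inf-sup condition and of $V_{\bs\xi}$ --- and your bookkeeping reproduces exactly $\widehat C_{uf}=2C_{\mathbb A}$, $\widehat C_{ug}=\widehat C_{pf}=1+2C_{\mathbb A}\|\mathbb A\|$, $\widehat C_{pg}=\|\mathbb A\|(1+2C_{\mathbb A}\|\mathbb A\|)$. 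What the paper's route buys is reuse of an abstract theorem needed anyway (for the Oseen well-posedness later) and no estimate algebra to redo; what yours buys is self-containedness and an exact identity for the compressible part of $\widehat{\mathbf u}$. One shared caveat: your symbol bound $|M(\bs\xi)\mathbf v|\le 4\pi^2\|\mathbb A\|\,|\bs\xi|^2|\mathbf v|$ is, with the entrywise norm \eqref{TensNorm}, true only up to dimension-dependent factors (up to $n^2$ in the worst case); but the paper's own boundedness claim $|a_0(\widehat{\mathbf w},\widehat{\mathbf v})|\le\|\mathbb A\|\,|\bs\xi|^2\,|\widehat{\mathbf w}|\,|\widehat{\mathbf v}|$ has exactly the same looseness, so your constants match the paper's at the same level of precision.
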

\begin{proof}
Introducing the new variables $\widehat{\mathbf w}=2\pi\widehat{\mathbf u}$, $\hat q=i\hat p$, the algebraic system \eqref{E4.7} can be represented in the equivalent form
\begin{align}\label{E4.7e}
\widetilde{\mathfrak S}(\bs\xi)\left(\widehat{\mathbf w}(\bs\xi) \atop \hat q(\bs\xi)\right)=\left(\widehat{\mathbf f}(\bs\xi)/(2\pi) \atop 
-i\hat g(\bs\xi)\right) \quad\forall\,\bs\xi\in \dot\Z^n,
\end{align}
where $\widetilde{\mathfrak S}(\bs\xi)$ is a real matrix with the entries
\begin{align}
\label{ADN-elliptic-1}
\widetilde{\mathfrak S}_{\ell j}(\boldsymbol\xi )=
\begin{cases}
\xi _\alpha a_{\ell j}^{\alpha \beta }\xi _\beta,  &  \ell, j=1,\ldots ,n;\\
\xi _\ell, & \ell= 1,\ldots ,n,\ j=n+1;\\
\xi _j,  & \ell=n+1,\ j=1,\ldots ,n;\\
0,& \ell =j=n+1.
\end{cases}
\end{align}
In order to show that
$\widetilde{\mathfrak S}_{\ell j}(\boldsymbol\xi )$ is non-singular for any $\boldsymbol\xi \in \dot\Z^n$, we
use Theorem \ref{B-B}.
To this end, for a fixed $\boldsymbol\xi \in \dot\Z^n$, we consider the bilinear forms $a_0:{\mathbb R}^n\times {\mathbb R}^n\to {\mathbb R}$ and $b_0:{\mathbb R}^n\times {\mathbb R}\to {\mathbb R}$,
\begin{align}
\label{a-0}
&a_0(\widehat{\mathbf w},\widehat{\mathbf v}):=\widehat{w}_\ell \xi_\alpha a_{\ell j}^{\alpha \beta }\xi _\beta \hat{v}_j\quad \ \forall \, \widehat{\mathbf w},\widehat{\mathbf v}\in {\mathbb R}^n\,,\\
\label{b-0}
&b_0(\widehat{\mathbf v},\hat{q}):=\xi_j\hat{v}_j\hat{q}\hspace{5.5em} \forall \, \widehat{\mathbf v}\in {\mathbb R}^n,\, \hat{q}\in {\mathbb R},
\end{align}
as well as the closed subspace $V_{\boldsymbol\xi}$ of ${\mathbb R}^n$ given by
\begin{align}
\label{V-0}
V_{\boldsymbol\xi}:=\left\{\widehat{\mathbf v}\in {\mathbb R}^n:b_0(\widehat{\mathbf v},\hat{q})=0,\ \forall \, \hat{q}\in {\mathbb R}\right\}=\left\{\widehat{\mathbf v}\in {\mathbb R}^n:\xi_j\hat{v}_j=0\right\}.
\end{align}
It is immediate that these bilinear forms are bounded, as they satisfy the estimates:
\begin{align*}
|a_0(\widehat{\mathbf w},\widehat{\mathbf v})|\leq \|{\mathbb A}\|
|\boldsymbol{\xi }|^2|\widehat{\mathbf w}|\, |\widehat{\mathbf v}|,\ \ |b_0(\widehat{\mathbf v},\hat{q})|\leq |\boldsymbol{\xi }|\, |\widehat{\mathbf v}|\, |\hat{q}| \quad \forall \, \widehat{\mathbf w},\widehat{\mathbf v}\in {\mathbb R}^n,\ \forall \, \hat{q}\in {\mathbb R}.
\end{align*}

The symmetry conditions \eqref{eq:mik1} allow us to write the bilinear form $a_0$ as
\begin{align}
\label{a-0-1}
a_0(\widehat{\mathbf w},\widehat{\mathbf v})
=a_{\ell j}^{\alpha \beta } (\widehat{\mathbf w}\otimes \boldsymbol\xi )_{\ell \alpha }^s(\widehat{\mathbf v}\otimes \boldsymbol\xi )^s_{\beta j}\,,
\end{align}
where $(\widehat{\mathbf w}\otimes \boldsymbol\xi )^s$ is the symmetric part of the matrix $\widehat{\mathbf w}\otimes \boldsymbol\xi$, i.e.,
\begin{align}
(\widehat{\mathbf w}\otimes \boldsymbol\xi )^s_{\ell \alpha}
:=\frac{1}{2}\left(\widehat{w}_\ell \xi_\alpha +\widehat{w}_\alpha \xi_\ell\right),\ \ \ell ,\alpha =1,\ldots ,n\,.
\end{align}
Due to \eqref{a-0-1} and the ellipticity condition \eqref{eq:mik2} we obtain that $a_0$ satisfies the estimate
\begin{align}
a_0(\widehat{\mathbf v},\widehat{\mathbf v})\geq c_{{\mathbb A}}^{-1}|(\widehat{\mathbf v}\otimes {\boldsymbol\xi })^s|^2
=\frac{1}{2}c_{{\mathbb A}}^{-1}|\widehat{\mathbf v}|^2|{\boldsymbol\xi }|^2
\quad  \forall \, \widehat{\mathbf v}\in {\mathbb R}^n \mbox{ such that } \widehat{\mathbf v}\cdot \boldsymbol\xi =0\,,
\end{align}
where $\widehat{\mathbf v}\cdot \boldsymbol\xi =\sum _{\ell =1}^n(\widehat{\mathbf v}\otimes \boldsymbol\xi )_{\ell \ell }^s$ is the trace of the symmetric matrix $(\widehat{\mathbf v}\otimes \boldsymbol\xi )^s$. Therefore, the bounded bilinear form $a_0:V_{\boldsymbol \xi}\times V_{\boldsymbol \xi}\to {\mathbb R}$ is coercive when $\boldsymbol\xi \not= \bf 0$.

In addition, an elementary computation shows that
\begin{align}
\label{inf-sup-b0}
\inf_{\hat{q}\in {\mathbb R}\setminus \{0\}}\sup _{\widehat{\mathbf v}\in {\mathbb R}^n\setminus \{{\bf 0}\}}\frac{b_0(\widehat{\mathbf v},\hat{q})}{|\widehat{\mathbf v}|\, |\hat{q}|}\geq |\boldsymbol\xi |\,,
\end{align}
and accordingly that the bilinear form $b_0$ satisfies the inf-sup condition with the inf-sup constant $|\boldsymbol\xi |$.

By applying Theorem \ref{B-B}, we conclude that the modified symbol matrix $\widetilde{\mathfrak S} (\boldsymbol\xi )$ given by \eqref{ADN-elliptic-1} is invertible for any $\boldsymbol\xi \in \dot\Z^n$, and hence that the symbol matrix ${\mathfrak S} (\boldsymbol\xi )$ given by \eqref{ADN-elliptic} has the same property and the formula \eqref{eq:mik22} is well defined. 
Moreover, estimates \eqref{mixed-Cu} and \eqref{mixed-Cp} are applicable to the real and imaginary parts of system \eqref{E4.7e} and after combining them, we get
\begin{align*}
&|\widehat{\mathbf w}(\bs\xi)|\le \widehat C_{uf}\frac{|\widehat{\mathbf f}(\bs\xi)|}{2\pi|\bs\xi|^2}
+\widehat C_{ug}\frac{|\hat{g}(\bs\xi)|}{|\bs\xi|},\\
&
|\hat q(\bs\xi)|\le \widehat C_{pf}\frac{|\widehat{\mathbf f}(\bs\xi)|}{2\pi|\bs\xi|}
+\widehat C_{pg}|\hat{g}(\bs\xi)|\quad\forall\,\bs\xi\in \dot\Z^n.
\end{align*}
Recalling that $\widehat{\mathbf w}=2\pi\widehat{\mathbf u}$ and $\hat q=i\hat p$, we arrive at estimates \eqref{eq:mik23} and \eqref{eq:mik24}.
\end{proof}

Note that the proof of Lemma \ref{ADN-system-ext} is essentially similar to the proof  of \cite[Lemma 15]{KMW-LP2021} given for the non-periodic case, where the ellipticity of the anisotropic Stokes system in the sense of Agmon–Douglis– Nirenberg was given,  
but augments  it with estimates \eqref{eq:mik23}, \eqref{eq:mik24}.

\begin{remark}\label{rem:mik1}
For the isotropic case \eqref{eq:mik7}, 
due to \eqref{eq:mik8},
system \eqref{eq:mik20}-\eqref{eq:mik21} reduces to
\begin{align}
\label{eq:mik25}
&4\pi^2\left[(\lambda+\mu) \bs\xi(\bs\xi\cdot \widehat{\mathbf u}(\bs\xi))+\mu|\bs\xi|^2\widehat{\mathbf u}(\bs\xi)\right]
+2\pi i\bs\xi\hat p(\bs\xi)=\widehat{\mathbf f}(\bs\xi), \quad\forall\,\bs\xi\in \dot\Z^n,\\
\label{eq:mik26}
&2\pi i\bs\xi\cdot \widehat{\mathbf u}(\bs\xi)=\hat g(\bs\xi) 
\quad\forall\,\bs\xi\in \dot\Z^n.
\end{align}
Taking scalar product of equation \eqref{eq:mik25} with $\bs\xi$ and employing \eqref{eq:mik26}, we obtain
\begin{align}
\label{eq:mik27}
&\hat p(\bs\xi)=\frac{\bs\xi\cdot\widehat{\mathbf f}(\bs\xi)}{2\pi i|\bs\xi|^2} +(\lambda+2\mu) {\hat g(\bs\xi)}, \quad\forall\,\bs\xi\in \dot\Z^n,
\end{align}
and substituting this back to \eqref{eq:mik25}, we get
\begin{align}
\label{eq:mik28}
& 
\widehat{\mathbf u}(\bs\xi)
=\frac{1}{4\pi^2\mu|\bs\xi|^2}\left[\widehat{\mathbf f}(\bs\xi) 
-\bs\xi\frac{\bs\xi\cdot\widehat{\mathbf f}(\bs\xi)}{|\bs\xi|^2}\right]
+ \bs\xi\frac{\hat g(\bs\xi)}{2\pi i|\bs\xi|^2},
\quad\forall\,\bs\xi\in \dot\Z^n
\end{align}
(cf. \cite[Section 2.2]{Temam1995} for the case $s=1$, $g=0$, $\lambda=0$, and $\mu=1$).
Expressions  \eqref{eq:mik28}, \eqref{eq:mik27}  evidently satisfy estimates \eqref{eq:mik23}, \eqref{eq:mik24}.
\hfill$\square$
\end{remark}

The anisotropic Stokes system \eqref{eq:mik16}-\eqref{eq:mik17} can be re-written as  
\begin{align*}
&S\left({\mathbf u}\atop p\right)=\left({\mathbf f} \atop g\right),
\end{align*}
where
\begin{align*}
&S\left({\mathbf u}\atop p\right):=\left(-\bs{\mathfrak L}{\mathbf u}+\nabla p \atop {\rm{div}}\, {\mathbf u}\right),
\end{align*}
and for any $s\in\R$,
\begin{align}\label{eq:mik29}
{S}:\dot{\mathbf H}_\#^s\times \dot H_\#^{s-1} \to \dot{\mathbf H}_\#^{s-2}\times \dot H_\#^{s-1} 
\end{align}
is a linear continuous operator.

Now we are in the position to prove the following assertion.
\begin{theorem}\label{th:mik1}
Let $n\ge 2$ and condition \eqref{eq:mik2} hold.

(i)
For any
$({\mathbf f},g)\in\dot{\mathbf H}_\#^{s-2}\times \dot H_\#^{s-1}$, $s\in\R$,
the anisotropic Stokes system \eqref{eq:mik16}-\eqref{eq:mik17} in torus $\T$ has a unique solution
$({\mathbf u},p )\in \dot{\mathbf H}_\#^s\times \dot H_\#^{s-1}$, where
\begin{align}\label{eq:mik30}
{\mathbf u}(\mathbf x)=\sum_{\bs\xi\in\dot\Z^n}e^{2\pi i x\cdot\bs\xi}\widehat {\mathbf u}(\bs\xi),\quad
p(\mathbf x)=\sum_{\bs\xi\in\dot\Z^n}e^{2\pi i x\cdot\bs\xi}\hat p(\bs\xi)
\end{align}
with $\widehat {\mathbf u}(\bs\xi)$ and $\hat p(\bs\xi)$ given by \eqref{eq:mik22}. 
In addition, 
\begin{align}
\label{eq:mik31u}
&\|{\mathbf u}\|_{\dot{\mathbf H}_\#^{s}}
\leq C_{uf}\|\mathbf f \|_{\dot{\mathbf H}_\#^{s-2}}+C_{ug}\|g\|_{\dot H_\#^{s-1} },\\
\label{eq:mik31p}
&\|p \|_{\dot H_\#^{s-1} }
\leq C_{pf}\|\mathbf f \|_{\dot{\mathbf H}_\#^{s-2}}+C_{pg}\|g\|_{\dot H_\#^{s-1} },
\end{align}
where
\begin{align*}
C_{uf}=\frac{1}{\pi^2}C_{\mathbb A},\ 
C_{ug}=C_{pf}=\frac{1}{\sqrt{2}\pi}(1+2C_{\mathbb A}\|\mathbb A\|),\ 
C_{pg}=\|\mathbb A\|(1+2C_{\mathbb A}\|\mathbb A\|),
\end{align*}
and operator \eqref{eq:mik29} is an isomorphism.

(ii) Moreover, if $({\mathbf f},g)\in\dot{\mathbf C}^\infty_{\#}\times \dot {\mathcal C}^\infty_\#$
then $({\mathbf u},p)\in\dot{\mathbf C}^\infty_{\#}\times \dot {\mathcal C}^\infty_\#$.
\end{theorem}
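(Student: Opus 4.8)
The plan is to solve the system mode by mode in Fourier space, relying entirely on Lemma~\ref{ADN-system-ext}, and then to transfer the per-mode bounds into Sobolev-norm bounds by Minkowski's inequality. First I would \emph{define} the candidate solution by prescribing its Fourier coefficients through \eqref{eq:mik22}, that is $(\widehat{\mathbf u}(\bs\xi),\hat p(\bs\xi))^\top:=\mathfrak S^{-1}(\bs\xi)(\widehat{\mathbf f}(\bs\xi),\hat g(\bs\xi))^\top$ for $\bs\xi\in\dot\Z^n$, which is legitimate because Lemma~\ref{ADN-system-ext} guarantees that $\mathfrak S(\bs\xi)$ is invertible there; the zeroth coefficients vanish since the data lie in the dotted spaces. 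It then remains to verify that the series \eqref{eq:mik30} actually represent elements of $\dot{\mathbf H}_\#^s$ and $\dot H_\#^{s-1}$, and this is precisely where the coefficient estimates \eqref{eq:mik23}, \eqref{eq:mik24} enter.

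For the norm bound I would multiply \eqref{eq:mik23} by $\rho(\bs\xi)^s$ and take the $\dot\ell_2$-norm over $\bs\xi\in\dot\Z^n$. Minkowski's inequality splits the right-hand side into the $\widehat{\mathbf f}$- and $\hat g$-contributions; the prefactors $\rho(\bs\xi)^s|2\pi\bs\xi|^{-2}$ and $\rho(\bs\xi)^s(2\pi|\bs\xi|)^{-1}$ are then dominated by $\tfrac{1}{2\pi^2}\rho(\bs\xi)^{s-2}$ and $\tfrac{1}{\sqrt2\,\pi}\rho(\bs\xi)^{s-1}$ respectively, by the two-sided bound \eqref{eq:mik9} relating $|\bs\xi|$ and $\rho(\bs\xi)$. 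Recognising the resulting sums as $\|\mathbf f\|_{\dot{\mathbf H}_\#^{s-2}}$ and $\|g\|_{\dot H_\#^{s-1}}$ and inserting the explicit values of $\widehat C_{uf},\widehat C_{ug}$ from Lemma~\ref{ADN-system-ext} yields \eqref{eq:mik31u} with exactly the stated constants; the identical computation applied to \eqref{eq:mik24} gives \eqref{eq:mik31p}. Finiteness of these norms simultaneously proves convergence of \eqref{eq:mik30} in the respective spaces, hence existence.

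Uniqueness I would obtain by setting $\mathbf f=\mathbf 0$, $g=0$: any solution in $\dot{\mathbf H}_\#^s\times\dot H_\#^{s-1}$ has Fourier coefficients satisfying the homogeneous algebraic system \eqref{E4.7}, and invertibility of $\mathfrak S(\bs\xi)$ forces $\widehat{\mathbf u}(\bs\xi)=\mathbf 0$, $\hat p(\bs\xi)=0$ for every $\bs\xi\in\dot\Z^n$, while the zeroth modes vanish by membership in the dotted spaces; thus $(\mathbf u,p)=(\mathbf 0,0)$. Combined with continuity of $S$ from \eqref{eq:mik29} and the boundedness of the inverse encoded in \eqref{eq:mik31u}--\eqref{eq:mik31p}, this shows that $S$ in \eqref{eq:mik29} is an isomorphism, completing (i).

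For part (ii) I would argue by scale invariance of the unique solution. Since $\dot{\mathbf C}^\infty_{\#}\times\dot{\mathcal C}^\infty_\#\subset\dot{\mathbf H}_\#^{s-2}\times\dot H_\#^{s-1}$ for \emph{every} $s\in\R$, part (i) produces for each $s$ a solution $(\mathbf u_s,p_s)$; by the embeddings $H_\#^t\hookrightarrow H_\#^s$ for $t>s$ together with uniqueness at the lower level, all these coincide with a single pair $(\mathbf u,p)$ independent of $s$, whence $(\mathbf u,p)\in\bigcap_{s\in\R}\big(\dot{\mathbf H}_\#^s\times\dot H_\#^{s-1}\big)=\dot{\mathbf C}^\infty_{\#}\times\dot{\mathcal C}^\infty_\#$. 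As for difficulty, there is no deep obstacle once Lemma~\ref{ADN-system-ext} is available, since symbol invertibility and the decisive per-mode estimates are already established; the only delicate point is the passage from the pointwise-in-$\bs\xi$ bounds to the Sobolev estimates, namely checking that Minkowski's inequality together with \eqref{eq:mik9} reproduces the precise constants $C_{uf},C_{ug},C_{pf},C_{pg}$, which is bookkeeping rather than a conceptual hurdle.
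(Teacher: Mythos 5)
Your proposal is correct and follows essentially the same route as the paper's own proof: solve mode-by-mode via the symbol inverse from Lemma~\ref{ADN-system-ext}, convert the pointwise estimates \eqref{eq:mik23}--\eqref{eq:mik24} into Sobolev bounds using \eqref{eq:mik9} (recovering the same constants), deduce uniqueness and the isomorphism property from symbol invertibility plus the a priori bounds, and obtain (ii) from the inclusion of the data in every $\dot{\mathbf H}_\#^{s-2}\times \dot H_\#^{s-1}$ together with $\bigcap_{s\in\R}\dot H_\#^s=\dot{\mathcal C}^\infty_\#$. The only cosmetic differences are your use of the name ``Minkowski's inequality'' for the $\ell_2$ triangle inequality and the phrase ``scale invariance'' for what is really just uniqueness combined with the embeddings $H_\#^t\hookrightarrow H_\#^s$, $t>s$.
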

\begin{proof}
(i) Expressions \eqref{eq:mik22} supplemented by the relations $\widehat{\mathbf u}(\mathbf 0)=\mathbf 0$, $\hat p(\mathbf 0)=0$,
following from the inclusions $({\mathbf u},p )\in \dot{\mathbf H}_\#^s\times \dot H_\#^{s-1}$, imply the uniqueness. 
From estimates \eqref{eq:mik23} 
and \eqref{eq:mik24} 
we obtain the estimates
\begin{align*}
\| {\mathbf u}\|_{\dot{\mathbf H}_\#^s}&=\left(\sum_{\bs\xi\in\dot\Z^n}\rho(\bs\xi)^{2s}|\widehat {\mathbf u}(\bs\xi)|^2\right)^{1/2}\nonumber\\
&\le \frac{\widehat C_{uf}}{4\pi^2}\left(\sum_{\bs\xi\in\dot\Z^n}\rho(\bs\xi)^{2s}
\frac{|\widehat{\mathbf f}(\bs\xi)|^2}{|\bs\xi|^4}\right)^{1/2}
+\frac{\widehat C_{ug}}{2\pi}\left(\sum_{\bs\xi\in\dot\Z^n} \rho(\bs\xi)^{2s} \frac{|\hat{g}(\bs\xi)|^2}{|\bs\xi|^2}\right)^{1/2}\nonumber\\
&= \frac{\widehat C_{uf}}{4\pi^2}\left(\sum_{\bs\xi\in\dot\Z^n}\rho(\bs\xi)^{2(s-2)}|\widehat{\mathbf f}(\bs\xi)|^2
\frac{\rho(\bs\xi)^4}{|\bs\xi|^4}\right)^{1/2}\nonumber\\
&\hspace{10.5em}+\frac{\widehat C_{ug}}{2\pi}\left(\sum_{\bs\xi\in\dot\Z^n}\rho(\bs\xi)^{2(s-1)}|\hat{g}(\bs\xi)|^2
\frac{\rho(\bs\xi)^2}{|\bs\xi|^2}\right)^{1/2}\nonumber\\
&\le \frac{\widehat C_{uf}}{2\pi^2}\|\mathbf f\|_{\dot{\mathbf H}_\#^{s-2}}
+\frac{\widehat C_{ug}}{\sqrt{2}\pi}\|g\|_{\dot H_\#^{s-1} },
\\
\|p \|_{\dot H_\#^{s-1}}&=\left(\sum_{\bs\xi\in\dot\Z^n}\rho(\bs\xi)^{2s-2}|\hat p(\bs\xi)|^2\right)^{1/2}\nonumber\\
&\le \frac{\widehat C_{pf}}{2\pi}\left(\sum_{\bs\xi\in\dot\Z^n}\rho(\bs\xi)^{2s-2}
\frac{|\widehat{\mathbf f}(\bs\xi)|^2}{|\bs\xi|^2}\right)^{1/2}
+{\widehat C_{pg}}\left(\sum_{\bs\xi\in\dot\Z^n} \rho(\bs\xi)^{2s-2} {|\hat{g}(\bs\xi)|^2}\right)^{1/2}\nonumber\\
&= \frac{\widehat C_{pf}}{2\pi}\left(\sum_{\bs\xi\in\dot\Z^n}\rho(\bs\xi)^{2(s-2)}|\widehat{\mathbf f}(\bs\xi)|^2
\frac{\rho(\bs\xi)^2}{|\bs\xi|^2}\right)^{1/2}\nonumber\\
&\hspace{15em}+{\widehat C_{pg}}\left(\sum_{\bs\xi\in\dot\Z^n}\rho(\bs\xi)^{2(s-1)}|\hat{g}(\bs\xi)|^2\right)^{1/2}\nonumber\\
&\le \frac{\widehat C_{pf}}{\sqrt{2}\pi}\|\mathbf f\|_{\dot{\mathbf H}_\#^{s-2}}
+{\widehat C_{pg}}\|g\|_{\dot H_\#^{s-1} }.
\end{align*}
These estimates imply \eqref{eq:mik31u}-\eqref{eq:mik31p} and hence inclusions in the corresponding spaces and that operator \eqref{eq:mik29} is an isomorphism.

(ii) The inclusion $({\mathbf f},g)\in\dot{\mathbf C}^\infty_{\#}\times \dot {\mathcal C}^\infty_\#$ implies that $({\mathbf f},g)\in\dot{\mathbf H}_\#^{s-2}\times \dot H_\#^{s-1}$ for any $s\in\R$.
Then  by item (i) $({\mathbf u},p )\in \dot{\mathbf H}_\#^s\times \dot H_\#^{s-1} $ for any $s\in\R$  and hence
$({\mathbf u},p)\in\dot{\mathbf C}^\infty_{\#}\times \dot {\mathcal C}^\infty_\#$.
\mg
\end{proof}

If $g=0$ in \eqref{eq:mik17}, we can re-formulate the Stokes system \eqref{eq:mik16}-\eqref{eq:mik17} as the vector equation
\begin{align}
\label{eq:mik32}
&-\bs{\mathfrak L}{\mathbf u}+\nabla p=\mathbf{f}
\end{align}
for the unknowns $({\mathbf u},p )\in \dot{\mathbf H}_{\#\sigma}^{s}\times \dot H_\#^{s-1}$ and the given data
${\mathbf f}\in\dot{\mathbf H}_\#^{s-2}$, $s\in\R$.
Then Theorem \ref{th:mik1} implies the following assertion.
\begin{corollary}\label{cor:mik1}
Let $n\ge 2$ and condition \eqref{eq:mik2} hold.

(i)
For any
${\mathbf f}\in\dot{\mathbf H}_\#^{s-2}$, $s\in\R$,
the anisotropic Stokes equation \eqref{eq:mik32} in torus $\T$ has a unique incompressible  solution
$({\mathbf u},p )\in \dot{\mathbf H}_{\#\sigma}^{s}\times \dot H_\#^{s-1} $, 
with $\widehat {\mathbf u}(\bs\xi)$ and $\hat p(\bs\xi)$ given by \eqref{eq:mik22}, \eqref{eq:mik30} (and particularly by \eqref{eq:mik28}, \eqref{eq:mik27}, \eqref{eq:mik30} for the isotropic case \eqref{eq:mik7})  with $g=0$.
In addition, 
\begin{align}
\label{eq:mik31u0}
&\|{\mathbf u}\|_{\dot{\mathbf H}_\#^{s}}
\leq C_{uf}\|\mathbf f \|_{\dot{\mathbf H}_\#^{s-2}},\\
\label{eq:mik31p0}
&\|p \|_{\dot H_\#^{s-1} }
\leq C_{pf}\|\mathbf f \|_{\dot{\mathbf H}_\#^{s-2}},
\end{align}
where
\begin{align*}
C_{uf}=\frac{1}{\pi^2}C_{\mathbb A},\ 
C_{pf}=\frac{1}{\sqrt{2}\pi}(1+2C_{\mathbb A}\|\mathbb A\|),
\end{align*}
and the operator 
$$\bs{\mathcal L}: \dot{\mathbf H}_{\#\sigma}^{s}\times \dot H_\#^{s-1} \to \dot{\mathbf H}_\#^{s-2},$$
where
\begin{align}
\label{eq:mik4}
\bs{\mathcal L}({\mathbf u},p ):=\boldsymbol{\mathfrak L}{\mathbf u}-\nabla p,
\end{align}
 is an isomorphism.

(ii) Moreover, if ${\mathbf f}\in\dot{\mathbf C}^\infty_{\#}$
then $({\mathbf u},p)\in\dot{\mathbf C}^\infty_{\#\sigma}\times \dot {\mathcal C}^\infty_\#$.
\end{corollary}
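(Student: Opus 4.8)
The plan is to obtain Corollary \ref{cor:mik1} as the specialisation of Theorem \ref{th:mik1} to the homogeneous divergence datum $g=0$. First I would note that requiring $\mathbf u\in\dot{\mathbf H}_{\#\sigma}^{s}$ is exactly imposing $\div\,\mathbf u=0$, i.e.\ equation \eqref{eq:mik17} with $g=0$, so that \eqref{eq:mik32} for an incompressible $\mathbf u$ coincides with the full system \eqref{eq:mik16}--\eqref{eq:mik17} for the data $(\mathbf f,0)\in\dot{\mathbf H}_\#^{s-2}\times\dot H_\#^{s-1}$. Applying Theorem \ref{th:mik1}(i) to this data yields a unique $(\mathbf u,p)\in\dot{\mathbf H}_\#^{s}\times\dot H_\#^{s-1}$ with Fourier coefficients \eqref{eq:mik22} (and \eqref{eq:mik27}--\eqref{eq:mik28} in the isotropic case \eqref{eq:mik7}), and since the second equation now reads $\div\,\mathbf u=0$, in fact $\mathbf u\in\dot{\mathbf H}_{\#\sigma}^{s}$. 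Setting $g=0$ in the bounds \eqref{eq:mik31u}--\eqref{eq:mik31p} (equivalently dropping the $\hat g$-terms in \eqref{eq:mik23}--\eqref{eq:mik24} before summing against $\rho(\bs\xi)^{2s}$) immediately produces \eqref{eq:mik31u0}--\eqref{eq:mik31p0} with the asserted constants $C_{uf}$ and $C_{pf}$.

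For the isomorphism statement I would verify the defining properties of $\bs{\mathcal L}$ one by one. Linearity and continuity of $\bs{\mathcal L}:\dot{\mathbf H}_{\#\sigma}^{s}\times\dot H_\#^{s-1}\to\dot{\mathbf H}_\#^{s-2}$ are clear from \eqref{eq:mik4}, since $\boldsymbol{\mathfrak L}\mathbf u\in\dot{\mathbf H}_\#^{s-2}$ and $\nabla p\in\dot{\mathbf H}_\#^{s-2}$ whenever $p\in\dot H_\#^{s-1}$. Because \eqref{eq:mik32} is precisely $-\bs{\mathcal L}(\mathbf u,p)=\mathbf f$, surjectivity of $\bs{\mathcal L}$ is the existence part applied to the datum $-\mathbf f$, and injectivity is the uniqueness part: if $\bs{\mathcal L}(\mathbf u,p)=\mathbf 0$ then $(\mathbf u,p)$ solves the homogeneous system with $\mathbf f=\mathbf 0$, $g=0$, whence $\mathbf u=\mathbf 0$ and $p=0$. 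Continuity of the inverse is exactly the pair of a priori estimates \eqref{eq:mik31u0}--\eqref{eq:mik31p0}. Thus $\bs{\mathcal L}$ is a bounded linear bijection with bounded inverse, i.e.\ an isomorphism.

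Part (ii) then follows verbatim as in Theorem \ref{th:mik1}(ii): if $\mathbf f\in\dot{\mathbf C}^\infty_{\#}$ then $\mathbf f\in\dot{\mathbf H}_\#^{s-2}$ for every $s\in\R$, so by item (i) $(\mathbf u,p)\in\dot{\mathbf H}_{\#\sigma}^{s}\times\dot H_\#^{s-1}$ for every $s$, and the identity $\bigcap_{s\in\R}\dot H_\#^{s}=\dot{\mathcal C}^\infty_\#$ gives $(\mathbf u,p)\in\dot{\mathbf C}^\infty_{\#\sigma}\times\dot{\mathcal C}^\infty_\#$.

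There is essentially no obstacle of genuine mathematical depth, as all the analytic work resides in Lemma \ref{ADN-system-ext} and Theorem \ref{th:mik1}. The only points demanding a little care are the sign bookkeeping in $-\bs{\mathcal L}(\mathbf u,p)=\mathbf f$ and the observation that restricting the Stokes operator to divergence-free $\mathbf u$ (i.e.\ to $g=0$) does not spoil surjectivity onto the \emph{full} target $\dot{\mathbf H}_\#^{s-2}$ --- which it does not, precisely because $\mathbf f$ and $g$ enter Theorem \ref{th:mik1} as independent data.
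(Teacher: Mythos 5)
Your proposal is correct and follows exactly the paper's route: the paper offers no separate argument for Corollary \ref{cor:mik1}, deducing it directly from Theorem \ref{th:mik1} by taking $g=0$, which is precisely what you do. Your additional care with the sign in $\bs{\mathcal L}(\mathbf u,p)=-\mathbf f$ and the explicit verification of bijectivity of the restricted operator are sound elaborations of what the paper leaves implicit.
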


\section{Stationary anisotropic periodic Oseen system}\label{sec:mik4t}


For the unknowns $({\mathbf u},p )\in \dot{\mathbf H}_\#^s\times \dot H_\#^{s-1}$, 
let us consider the Oseen system
\begin{align}
\label{Oseen-problem}
-\bs{\mathfrak L}{\mathbf u}+\nabla p+({\mathbf U}\cdot \nabla ){\mathbf u}&=\mathbf{f}, 
\\
\label{Oseen-problem-div}
{\rm{div}}\, {\bf u}&=g
\end{align}
with given data
$({\mathbf f},g)\in\dot{\mathbf H}_\#^{s-2}\times \dot H_\#^{s-1}$, $s\ge 1$, and a given function ${\mathbf U}$.

The anisotropic Oseen system \eqref{Oseen-problem}-\eqref{Oseen-problem-div} can be re-written as  
\begin{align*}
&S_U\left({\mathbf u}\atop p\right)=\left({\mathbf f} \atop g\right),
\end{align*}
where
\begin{align}\label{E5.2}
&S_U\left({\mathbf u}\atop p\right):
=\left(-\bs{\mathfrak L}{\mathbf u}+\nabla p+({\mathbf U}\cdot \nabla ){\mathbf u}\atop {\rm{div}}\, {\mathbf u}\right)
=S\left({\mathbf u}\atop p\right) + \left(({\mathbf U}\cdot \nabla ){\mathbf u} \atop 0\right).
\end{align}

\subsection{Weak solution to the stationary \mg periodic anisotropic Oseen system}

For a fixed ${\mathbf U}$, 
and the bilinear forms 
\begin{align}
\label{a-v}
&a_{\T;\mathbf U}({\bf u},{\bf v})
:=
\left\langle a_{ij}^{\alpha \beta }E_{j\beta }({\bf u}),E_{i\alpha }({\bf v})\right\rangle _{\T}
+\left\langle ({\mathbf U}\cdot \nabla ){\mathbf u},{\bf v}\right\rangle _{\T}\,,\
\forall \ {\bf u}, {\bf v}\in \dot{\mathbf H}_\#^1\,,\\
\label{b-v}
&b_{\T}({\bf v},q):=-\langle {\rm{div}}\, {\bf v},q\rangle _{\T}\,,\quad \forall \ {\bf v}\in \dot{\mathbf H}_\#^1\,, \ 
\forall \, q\in \dot L_{2\#}\,,
\end{align}
let us consider the following mixed variational problem:\\
{\em Find $({\bf u},p )\in {\dot{\mathbf H}_\#^1}\times \dot L_{2\#}$ such that for given ${\bf f} \in \dot{\mathbf H}_\#^{-1}$ and $g \in \dot L_{2\#}$,
}
\begin{align}
\label{transmission-S-variational-dl-3-equiv-0-2}
\left\{\begin{array}{ll}
a_{\T;\mathbf U}({\bf u},{\bf v})+b_{\T}({\bf v},p )
=-\langle{\bf f},{\bf v}\rangle_{\T}\quad \forall \, {\bf v}\in \dot{\mathbf H}_\#^1,\\
b_{\T}({\bf u},q)=-\langle g ,q\rangle _{\T}\quad \forall \, q\in \dot L_{2\#}.
\end{array}
\right.
\end{align}

Let us note that the subspace $\dot{\mathbf H}_{\#\sigma}^1$ of $\dot{\mathbf H}_\#^1$, see \eqref{E3.8}, has also the characterization
\begin{align}
\label{E3.4}
\dot{\mathbf H}_{\#\sigma}^1=\left\{{\bf w}\in \dot{\mathbf H}_\#^1: b_{\T}({\bf w},q)=0\quad \forall \, q\in \dot L_{2\#} \right\}.
\end{align}

Let us now prove the well-posedness result for problem \eqref{transmission-S-variational-dl-3-equiv-0-2} (cf.  Lemma 3.1 in \cite{KMW2020}, and Lemma 5  in \cite{KMW-LP2021}, where similar results were proved for the Stokes system in non-periodic anisotropic settings).
\begin{theorem}
\label{Oseen-problemTh}
Let $n\ge 2$, condition \eqref{eq:mik2} hold, and ${\mathbf U}\in {\mathbf L}_{\theta\#\sigma}$, where $\theta\in(2,\infty)$ if $n=2$, while  $\theta\in[n,\infty)$ if $n\ge 3$.

(i) Then for all given data ${\bf f} \in \dot{\mathbf H}_\#^{-1}$ and $g \in \dot L_{2\#}$, the variational problem \eqref{transmission-S-variational-dl-3-equiv-0-2}
has a unique solution $({\bf u},p )\in {\dot{\mathbf H}_\#^1}\times \dot L_{2\#}$ and 
\begin{align}
\label{estimate-1-wp-S-2u}
&\|{\bf u}\|_{\dot{\mathbf H}_\#^1}
\leq C_{uf}\|{\bf f} \|_{\dot{\mathbf H}_\#^{-1}}+C_{ug;U}\|g \|_{\dot L_{2\#}},\\
\label{estimate-1-wp-S-2p}
&\|p \|_{\dot L_{2\#}}
\leq C_{pf;U}\|{\bf f} \|_{\dot{\mathbf H}_\#^{-1}}+C_{pg;U}\|g \|_{\dot L_{2\#}},
\end{align}
where 
\begin{align}\label{Cuf}
C_{uf}=\pi^{-2}C_{\mathbb A},
\end{align}
while the constants
$C_{ug;U}=C_{pf;U}$ and 
$C_{pg;U}$ depend only on $C_{\mathbb A}$, $\|\mathbb A\|$, $n$, and $\mathbf U$.

(ii) Moreover, the anisotropic Oseen system \eqref{Oseen-problem}-\eqref{Oseen-problem-div}
is well-posed in $\dot{\mathbf H}_\#^1\times  \dot L_{2\#}$ and its unique solution $({\bf u},p)\in {\dot{\mathbf H}_\#^1}\times \dot L_{2\#}$ is provided by  the solution of the variational problem from item (i).

(iii) The operator 
\begin{align}\label{eq:mik29U1}
{S}_U:\dot{\mathbf H}_\#^1\times \dot L_{2\#} \to \dot{\mathbf H}_\#^{-1}\times \dot L_{2\#}, 
\end{align}
where $S_U$ is defined by \eqref{E5.2}, is an isomorphism.
\end{theorem}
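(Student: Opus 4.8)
The plan is to cast the variational problem \eqref{transmission-S-variational-dl-3-equiv-0-2} as a saddle-point (mixed) problem on the Hilbert spaces $\dot{\mathbf H}_\#^1$ and $\dot L_{2\#}$ and apply the Babu\v{s}ka--Brezzi theorem (Theorem \ref{B-B}), now in the infinite-dimensional setting that parallels the finite-dimensional use in Lemma \ref{ADN-system-ext}. First I would check that both forms are bounded. For $b_\T$ this is immediate, since $\mathrm{div}$ is first order. For $a_{\T;\mathbf U}$ the elastic part is bounded by $\|\mathbb A\|$ times the product of the $\mathbb E$-seminorms, hence by the $\dot{\mathbf H}_\#^1$-norms; the only genuinely new term is the convective one $\langle(\mathbf U\cdot\nabla)\mathbf u,\mathbf v\rangle_\T$. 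I would estimate it by H\"older's inequality as $\|\mathbf U\|_{\mathbf L_{\theta\#}}\,\|\nabla\mathbf u\|_{\mathbf L_{2\#}}\,\|\mathbf v\|_{\mathbf L_{r\#}}$ with $1/r=1/2-1/\theta$, and then invoke the Sobolev embedding $H_\#^1\hookrightarrow L_{r\#}$, which holds precisely under the stated restrictions on $\theta$ (any finite $r$ is admissible for $n=2$ once $\theta>2$, while the requirement $r\le 2n/(n-2)$ forces $\theta\ge n$ for $n\ge3$). This yields boundedness with an operator norm $\|a\|$ depending on $C_{\mathbb A}$, $\|\mathbb A\|$, $n$, and $\mathbf U$.

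The decisive step is coercivity of $a_{\T;\mathbf U}$ on the kernel $\dot{\mathbf H}_{\#\sigma}^1$ characterised in \eqref{E3.4}. Here I would use that $\mathbf U$ is divergence-free: periodic integration by parts gives $\langle(\mathbf U\cdot\nabla)\mathbf v,\mathbf v\rangle_\T=\tfrac12\langle\mathbf U,\nabla(|\mathbf v|^2)\rangle_\T=-\tfrac12\langle\mathrm{div}\,\mathbf U,|\mathbf v|^2\rangle_\T=0$, so the convective term drops out of $a_{\T;\mathbf U}(\mathbf v,\mathbf v)$ entirely. For $\mathbf v\in\dot{\mathbf H}_{\#\sigma}^1$ the matrix $\mathbb E(\mathbf v)$ is symmetric with zero trace (as $\mathrm{tr}\,\mathbb E(\mathbf v)=\mathrm{div}\,\mathbf v=0$), so the relaxed ellipticity \eqref{eq:mik2} applies pointwise and gives $a_{\T;\mathbf U}(\mathbf v,\mathbf v)\ge C_{\mathbb A}^{-1}\|\mathbb E(\mathbf v)\|^2$; combining the Korn inequality \eqref{eq:mik15} with the norm equivalence \eqref{eq:mik14} yields $a_{\T;\mathbf U}(\mathbf v,\mathbf v)\ge \pi^2C_{\mathbb A}^{-1}\|\mathbf v\|_{\dot{\mathbf H}_\#^1}^2$. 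Crucially the coercivity constant $\alpha=\pi^2C_{\mathbb A}^{-1}$ is unaffected by $\mathbf U$, which is why $C_{uf}=1/\alpha=\pi^{-2}C_{\mathbb A}$ in \eqref{Cuf} coincides with the Stokes constant. Since $a_{\T;\mathbf U}$ is not symmetric, coercivity must be used to verify the inf-sup conditions for $a$ on the kernel in both arguments, but these follow at once from $\sup_{\mathbf w}a(\mathbf v,\mathbf w)/\|\mathbf w\|\ge a(\mathbf v,\mathbf v)/\|\mathbf v\|\ge\alpha\|\mathbf v\|$.

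It remains to verify the inf-sup (LBB) condition for $b_\T$. I would construct the test field explicitly in Fourier variables: given $q\in\dot L_{2\#}$, set $\widehat{\mathbf v}(\bs\xi)=i\bs\xi\,\hat q(\bs\xi)/(2\pi|\bs\xi|^2)$, so that $\mathrm{div}\,\mathbf v=-q$ and $b_\T(\mathbf v,q)=\|q\|_{\dot L_{2\#}}^2$, while \eqref{eq:mik9} gives $\|\mathbf v\|_{\dot{\mathbf H}_\#^1}\le(\sqrt2\,\pi)^{-1}\|q\|_{\dot L_{2\#}}$; hence the inf-sup constant is at least $\sqrt2\,\pi$. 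With boundedness, coercivity on the kernel, and the inf-sup condition in hand, Theorem \ref{B-B} yields item (i): existence, uniqueness, and the stability estimates \eqref{estimate-1-wp-S-2u}--\eqref{estimate-1-wp-S-2p}, where the Brezzi formulas express $C_{uf},C_{ug;U},C_{pf;U},C_{pg;U}$ through $\alpha$, the inf-sup constant, and $\|a\|$; the $\mathbf U$-dependence enters only through $\|a\|$, matching the stated dependence of the constants.

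For item (ii) I would establish the equivalence of the weak formulation \eqref{transmission-S-variational-dl-3-equiv-0-2} with the distributional Oseen system \eqref{Oseen-problem}--\eqref{Oseen-problem-div}: testing against $\mathbf v\in\dot{\mathbf H}_{\#\sigma}^1$ and integrating by parts periodically show that the residual $-\bs{\mathfrak L}\mathbf u+(\mathbf U\cdot\nabla)\mathbf u-\mathbf f$ annihilates $\dot{\mathbf H}_{\#\sigma}^1$, whence by the periodic de Rham argument (equivalently the surjectivity of $\nabla$ onto the annihilator, already encoded in the inf-sup of $b_\T$) it equals $\nabla p$ for a unique $p\in\dot L_{2\#}$, recovering \eqref{eq:mik16}; the second equation is just the constraint. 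Item (iii) then follows because $S_U\colon\dot{\mathbf H}_\#^1\times\dot L_{2\#}\to\dot{\mathbf H}_\#^{-1}\times\dot L_{2\#}$ is a bounded linear bijection (existence plus uniqueness from item (i)) whose inverse is bounded by \eqref{estimate-1-wp-S-2u}--\eqref{estimate-1-wp-S-2p}; alternatively, since the convective term maps $\dot{\mathbf H}_\#^1$ compactly into $\dot{\mathbf H}_\#^{-1}$, $S_U$ is a compact perturbation of the isomorphism $S$ of Theorem \ref{th:mik1}, hence Fredholm of index zero, and injectivity upgrades it to an isomorphism. The main obstacle I anticipate is the boundedness of the convective trilinear form: pinning down the admissible Sobolev exponent $r$, and thereby justifying the precise thresholds on $\theta$ and the split between $n=2$ and $n\ge3$, is where the hypotheses are actually consumed, whereas the vanishing of the convective term on the diagonal makes the coercivity---and thus the $\mathbf U$-independence of $C_{uf}$---essentially free.
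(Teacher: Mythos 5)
Your proposal is correct and follows essentially the same route as the paper: it casts \eqref{transmission-S-variational-dl-3-equiv-0-2} as a mixed problem and applies Theorem \ref{B-B} with $X=\dot{\mathbf H}_\#^1$, ${\mathcal M}=\dot L_{2\#}$, $V=\dot{\mathbf H}_{\#\sigma}^1$, proving boundedness of the convective form by H\"older's inequality and the Sobolev embedding \eqref{E7.9}, coercivity on the divergence-free kernel from Korn's inequality \eqref{eq:mik15}, the ellipticity condition \eqref{eq:mik2} and the vanishing of $\langle(\mathbf U\cdot\nabla)\mathbf w,\mathbf w\rangle_\T$ for solenoidal $\mathbf U$, the inf-sup condition for $b_\T$ via the explicit Fourier solution of the divergence equation, and items (ii)--(iii) by density and by the bounded inverse furnished by the stability estimates. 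The only caveat concerns your alternative Fredholm argument for item (iii): at the endpoint $\theta=n$ (allowed for $n\ge 3$) the map $\mathbf u\mapsto(\mathbf U\cdot\nabla)\mathbf u$ lands only in the critical space $\mathbf L_{2n/(n+2)\#}$, whose embedding into $\mathbf H_\#^{-1}$ is continuous but not compact, so that alternative does not cover all admissible $\mathbf U$, whereas your primary argument (bijection plus bounded inverse) does.
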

\begin{proof}
(i) We intend to use Theorem \ref{B-B}, which requires boundedness  of the bilinear form 
$a_{\T;\mathbf U}(\cdot ,\cdot ):\dot{\mathbf H}_\#^1\times \dot{\mathbf H}_\#^1\to\mathbb R$ and coercivity of the bilinear form
$a_{\T;\mathbf U}(\cdot ,\cdot ):\dot{\mathbf H}_{\#\sigma}^1\times \dot{\mathbf H}_{\#\sigma}^1\to {\mathbb R}$.

Indeed, by \eqref{TensNorm} and \eqref{eq:mik14},
\begin{multline}
\left|\left\langle a_{ij}^{\alpha \beta }E_{j\beta }({\bf u}),E_{i\alpha }({\bf v})\right\rangle _{\T}\right|
\le n^4 \|{\mathbb A}\| \|{\mathbb E}({\bf u})\|_{L_{2\#}^{n\times n}} \|{\mathbb E}({\bf v})\|_{L_{2\#}^{n\times n}}\\
\le n^4 \|{\mathbb A}\| \|\nabla{\bf u}\|_{L_{2\#}^{n\times n}} \|\nabla{\bf v}\|_{L_{2\#}^{n\times n}}
 \le 4\pi^2n^4 \|{\mathbb A}\| \| \mathbf u\|_{\dot{\mathbf H}_\#^1} \| \mathbf v\|_{\dot{\mathbf H}_\#^1}.
\end{multline}
On the other hand, by \eqref{eq:mik37a2a} with $\mathbf U$ for $\mathbf v_1$ and ${\mathbf u}$ for $\mathbf v_2$
\begin{align}
\left|\left\langle ({\mathbf U}\cdot \nabla ){\mathbf u},{\bf v}\right\rangle _{\T}\right|
&\le \|({\mathbf U}\cdot \nabla ){\mathbf u}\|_{{\mathbf H}^{-1}_{\#}} \|{\bf v}\|_{{\mathbf H}^{1}_{\#}}
\nonumber\\
&\le {\cn 2\pi}{\mg C_{2\theta/(\theta-2)\#}} \|{\mathbf U}\|_{{\mathbf L}_{\gr\theta\#}} \|{\mathbf u}\|_{{\mathbf H}^1_{\#}}\|{\bf v}\|_{{\mathbf H}^{1}_{\#}}.
\end{align}
Hence
\begin{align}
|a_{\T;\mathbf U}({\bf u},{\bf v})|\le (4\pi^2n^4 \|{\mathbb A}\| 
+{\cn 2\pi}{\mg C_{2\theta/(\theta-2)\#}} \|{\mathbf U}\|_{{\mathbf L}_{\gr\theta\#}})
\| \mathbf u\|_{\dot{\mathbf H}_\#^1} \| \mathbf v\|_{\dot{\mathbf H}_\#^1},
\end{align}
which proves boundedness of the bilinear form 
$a_{\T;\mathbf U}(\cdot ,\cdot ):\dot{\mathbf H}_\#^1\times \dot{\mathbf H}_\#^1\to\mathbb R$.

The first Korn inequality \eqref{eq:mik15}, the relation $\sum_{i=1}^nE_{ii}({\bf w})=\div {\bf w}=0$ for  ${\bf w}\in 
{\dot{\mathbf H}_{\#\sigma}^1}$,
the ellipticity condition \eqref{eq:mik2}, and  equivalence of the norm $\|\nabla (\cdot )\|_{L_{2\#}^{n\times n}}$
to the norm $\|\cdot \|_{\dot{\mathbf H}_\#^1}$ in $\dot{\mathbf H}_\#^1$, see \eqref{eq:mik14},
imply that 
\begin{align}
\label{a-1-v2-S-}
\left\langle a_{ij}^{\alpha \beta }E_{j\beta }({\bf w}),E_{i\alpha }({\bf w})\right\rangle _{\T}
&\geq C_{\mathbb A}^{-1}\|{\mathbb E}({\bf w})\|_{L_{2\#}^{n\times n}}^2
\geq\frac{1}{2}C_{\mathbb A}^{-1}\|\nabla {\bf w}\|_{L_{2\#}^{n\times n}}^2
\nonumber\\
&\geq \pi^2C_{\mathbb A}^{-1}\|{\bf w}\|_{\dot{\mathbf H}_\#^1}^2\quad
\forall \, {\bf w}\in \dot{\mathbf H}_{\#\sigma}^1.
\end{align}
On the other hand, since ${\mathbf U}\in {\mathbf L}_{\theta\#\sigma}$,  Section~\ref{S7.3.3}(iii) implies that
\begin{align*}
\left\langle ({\mathbf U}\cdot \nabla ){\mathbf w},{\bf w}\right\rangle _{\T}&=0 \quad
\forall \, {\bf w}\in {\mathbf H}_{\#}^1.
\end{align*}
Hence
\begin{align}\label{a-1-v2-S}
a_{\T;\mathbf U}({\bf w},{\bf w})\geq\pi^2C_{\mathbb A}^{-1}\|{\bf w}\|_{\dot{\mathbf H}_\#^1}^2\quad
\forall \, {\bf w}\in \dot{\mathbf H}_{\#\sigma}^1.
\end{align}
Inequality \eqref{a-1-v2-S} shows that the bilinear form
$a_{\T;\mathbf U}(\cdot ,\cdot ):\dot{\mathbf H}_{\#\sigma}^1\times \dot{\mathbf H}_{\#\sigma}^1\to {\mathbb R}$ is coercive.

The boundedness of the divergence operator
${\rm{div}}:\dot{\mathbf H}_\#^1\to \dot L_{2\#}$
implies that the bilinear form $b_\T:\dot{\mathbf H}_\#^1\times \dot L_{2\#}\to {\mathbb R}$ is bounded as well.
One can also check that for any $g\in\dot L_{2\#}$, the divergence PDE \eqref{Oseen-problem-div} has a unique solution in $\dot{\mathbf H}_\#^1/\dot{\mathbf H}_{\#\sigma}^1$. 
The PDE is equivalent to
 the algebraic equation \eqref{eq:mik21} for the Fourier coefficients,  and the solution is presented  as
\begin{align*}
\widehat{\mathbf u}(\bs\xi)&=\frac{\bs\xi}{2\pi i|\bs\xi|^2}\hat g(\bs\xi) \quad\ \forall\,\bs\xi\in \dot\Z^n.
\end{align*}
Moreover, then the divergence operator
\begin{align*}
-{\rm{div}}:\dot{\mathbf H}_\#^1/\dot{\mathbf H}_{\#\sigma}^1\to \dot L_{2\#},
\end{align*}
is an isomorphism,
cf. also, e.g., \cite[Lemmas 7-9 in p. 30]{Tartar1978},
\cite[Corollary 2.4 and Theorem 2.3 in Chapter 1]{Girault-Raviart1986},
\cite[Proposition 1.2(i) and Remark 1.4 in Chapter 1]{Temam2001}, 
\cite[Theorem 3.1]{ACM2015} and 
\cite[Theorem 3.1]{KMW-DCDS2021} 
for some non-periodic settings.
The isomorphism implies that
there exists a constant $c_0>0$ such that for any $q\in \dot L_{2\#}$ there exists ${\bf v}_q\in \dot{\mathbf H}_\#^1$ satisfying the equation $-{\rm{div}}\, {\bf v}_q=q$ and the inequality
$\|\mathbf v\|_{\dot{\mathbf H}_\#^1}\leq {c_0}\|q\|_{L_2\#}$. Therefore, the following inequality holds for such $\bf v$,
\begin{align*}
b_{\T}({\bf v}_q,q)=-\left\langle {\rm{div}}\, {\bf v}_q,q\right\rangle _{\T}
=\langle q,q\rangle _{\T}=\|q\|_{\dot L_{2\#}}^2
\geq c_0^{-1}\|{\bf v}_q\|_{\dot{\mathbf H}_\#^1}\|q\|_{\dot L_{2\#}}.
\end{align*}
This, in turn, implies that the bounded bilinear form $b_{\T}:\dot{\mathbf H}_\#^1\times \dot L_{2\#}\to {\mathbb R}$ satisfies the inf-sup condition
\begin{align*}
\inf _{q\in \dot L_{2\#}\setminus \{0\}}\sup _{{\bf w}\in \dot{\mathbf H}_\#^1\setminus \{\bf 0\}}\frac{b_{\T}({\bf w},q)}{\|{\bf w}\|_{\dot{\mathbf H}_\#^1}\|q\|_{\dot L_{2\#}}}
\geq \inf _{q\in \dot L_{2\#}\setminus \{0\}}\frac{b_{\T}({\bf v}_q,q)}{\|{\bf v}_q\|_{\dot{\mathbf H}_\#^1}\|q\|_{\dot L_{2\#}}}\geq c_0^{-1}.
\end{align*}
Then Theorem \ref{B-B} with $X=\dot{\mathbf H}_\#^1$, ${\mathcal M}=\dot L_{2\#}$, and $V=\dot{\mathbf H}_{\#\sigma}^1$ implies that problem \eqref{transmission-S-variational-dl-3-equiv-0-2} is well-posed, as asserted.

(ii) 
Due to  \eqref{E5.2} and \eqref{eq:mik37c2a}, operator \eqref{eq:mik29U1} is linear and continuous.

The dense embedding of the space $\dot{\mathbf C}^\infty_{\#}$ in ${\dot{\mathbf H}_\#^1}$ shows that system \eqref{Oseen-problem}-\eqref{Oseen-problem-div} has the equivalent mixed variational formulation \eqref{transmission-S-variational-dl-3-equiv-0-2} thus proving item (ii).

(iii)
Estimates \eqref{estimate-1-wp-S-2u}-\eqref{estimate-1-wp-S-2p} imply the existence of a continuous inverse to operator \eqref{eq:mik29U1} 
\hfill \end{proof}

\subsection{\mg Solution regularity for the stationary anisotropic Oseen system}

For simplicity, we will further limit ourself with the case ${\mathbf U}\in {\mathbf C}^\infty_{\#\sigma}$. Regularity of the non-periodic isotropic Oseen problems with less smooth ${\mathbf U}$ were considered, e.g., in \cite{ARB2010}, \cite{ARB2011}, and references therein.

\begin{theorem}\label{th:mik1-Os-inf}
Let $n\ge 2$ and condition \eqref{eq:mik2} hold.
Let ${\mathbf U}\in {\mathbf C}^\infty_{\#\sigma}$.

(i)
For any
$({\mathbf f},g)\in\dot{\mathbf H}_\#^{s-2}\times \dot H_\#^{s-1}$, $s\ge 1$,
the anisotropic Oseen system \eqref{Oseen-problem}-\eqref{Oseen-problem-div} has a unique solution
\begin{align}\label{E5.16incl}
({\mathbf u},p )\in \dot{\mathbf H}_\#^s\times \dot H_\#^{s-1}.
\end{align}
In addition, there exists a constant $C_s=C_s(C_{\mathbb A},\|\mathbb A\|, n,\mathbf U, s)>0$ such that
\begin{align}
\label{eq:mik31Uinf}
\|{\mathbf u}\|_{\dot{\mathbf H}_\#^{s}}+\|p \|_{\dot H_\#^{s-1} }
\leq C_s\left(\|\mathbf f \|_{\dot{\mathbf H}_\#^{s-2}}+\|g\|_{\dot H_\#^{s-1} }\right)
\end{align}
and the operator 
\begin{align}\label{eq:mik29Uinf}
{S}_U: \dot{\mathbf H}_\#^s\times \dot H_\#^{s-1} \to \dot{\mathbf H}_\#^{s-2}\times \dot H_\#^{s-1} , 
\end{align}
 is an isomorphism; $S_U$ is defined by \eqref{E5.2}.

(ii) Moreover, if 
$({\mathbf f},g)\in\dot{\mathbf C}^\infty_{\#}\times \dot {\mathcal C}^\infty_\#$
then $({\mathbf u},p)\in\dot{\mathbf C}^\infty_{\#}\times \dot {\mathcal C}^\infty_\#$.
\end{theorem}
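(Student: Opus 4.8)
The plan is to upgrade the $H^1$-level well-posedness of Theorem~\ref{Oseen-problemTh} to arbitrary $s\ge 1$ by a finite bootstrap, treating the Oseen system as a Stokes system with a modified right-hand side and repeatedly invoking the Stokes isomorphism of Theorem~\ref{th:mik1}. First I would note that for $s\ge 1$ the data $(\mathbf f,g)\in\dot{\mathbf H}_\#^{s-2}\times\dot H_\#^{s-1}$ embeds into $\dot{\mathbf H}_\#^{-1}\times\dot L_{2\#}$, so Theorem~\ref{Oseen-problemTh} already furnishes a unique solution $(\mathbf u,p)\in\dot{\mathbf H}_\#^1\times\dot L_{2\#}$ together with the estimate \eqref{estimate-1-wp-S-2u}--\eqref{estimate-1-wp-S-2p}. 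Since this is the only candidate, the higher regularity asserted in \eqref{E5.16incl} is a statement purely about this fixed pair, and uniqueness at every higher level is inherited from uniqueness here.

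Next I would rewrite \eqref{Oseen-problem}--\eqref{Oseen-problem-div} in Stokes form
\[
-\bs{\mathfrak L}\mathbf u+\nabla p=\tilde{\mathbf f},\quad \div\,\mathbf u=g,\qquad \tilde{\mathbf f}:=\mathbf f-(\mathbf U\cdot\nabla)\mathbf u .
\]
The decisive mapping fact is that, since $\mathbf U\in\mathbf C^\infty_{\#\sigma}$, one has $(\mathbf U\cdot\nabla)\mathbf u=\div(\mathbf u\otimes\mathbf U)$ because $\div\,\mathbf U=0$, and multiplication by a fixed $\mathbf C^\infty_\#$ function is bounded on $H_\#^r$ for every $r\in\R$; hence $\mathbf u\in\dot{\mathbf H}_\#^t$ forces $(\mathbf U\cdot\nabla)\mathbf u\in\dot{\mathbf H}_\#^{t-1}$, the zero-mean membership being automatic as the term is a divergence, with a bound $\|(\mathbf U\cdot\nabla)\mathbf u\|_{\dot{\mathbf H}_\#^{t-1}}\le C\,\|\mathbf u\|_{\dot{\mathbf H}_\#^{t}}$.

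The bootstrap then runs as follows: if $\mathbf u\in\dot{\mathbf H}_\#^t$ with $1\le t<s$, then $\tilde{\mathbf f}\in\dot{\mathbf H}_\#^{\min(s-2,\,t-1)}$, while $g\in\dot H_\#^{s-1}\subseteq\dot H_\#^{\sigma-1}$ for $\sigma:=\min(s,t+1)$; since $\sigma-2=\min(s-2,\,t-1)$, the pair $(\tilde{\mathbf f},g)$ sits in $\dot{\mathbf H}_\#^{\sigma-2}\times\dot H_\#^{\sigma-1}$, so Theorem~\ref{th:mik1} applies at index $\sigma$ and, by uniqueness of the Stokes solution, forces the same $\mathbf u$ into $\dot{\mathbf H}_\#^{\sigma}$. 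Thus each step replaces $t$ by $\min(s,t+1)$, and starting from $t=1$ the value $s$ is reached after $\lceil s-1\rceil$ steps. Once $\mathbf u\in\dot{\mathbf H}_\#^s$, the term $(\mathbf U\cdot\nabla)\mathbf u\in\dot{\mathbf H}_\#^{s-1}\subseteq\dot{\mathbf H}_\#^{s-2}$ gives $\tilde{\mathbf f}\in\dot{\mathbf H}_\#^{s-2}$, and the pressure estimate \eqref{eq:mik31p} yields $p\in\dot H_\#^{s-1}$, establishing \eqref{E5.16incl}. Chaining the finitely many Stokes estimates \eqref{eq:mik31u}--\eqref{eq:mik31p} with the multiplier bound and the base estimate \eqref{estimate-1-wp-S-2u}--\eqref{estimate-1-wp-S-2p} produces \eqref{eq:mik31Uinf} with a constant $C_s$ of the stated dependence. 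The isomorphism claim for \eqref{eq:mik29Uinf} then follows formally: $S_U$ is continuous (the Stokes part by \eqref{eq:mik29}, the transport part by the multiplier bound), injective by uniqueness, surjective by existence, and boundedly invertible by \eqref{eq:mik31Uinf}. Part (ii) is immediate, since smooth data lie in every $\dot{\mathbf H}_\#^{s-2}\times\dot H_\#^{s-1}$, whence by (i) the solution lies in every $\dot{\mathbf H}_\#^s\times\dot H_\#^{s-1}$, and $\bigcap_{s\in\R}\dot H_\#^s=\dot{\mathcal C}^\infty_\#$.

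The step I expect to require the most care is closing the bootstrap indices cleanly: verifying at each stage that $(\tilde{\mathbf f},g)$ genuinely lands in the product space $\dot{\mathbf H}_\#^{\sigma-2}\times\dot H_\#^{\sigma-1}$ demanded by Theorem~\ref{th:mik1}, and that the Stokes solution produced there coincides with the Oseen solution by uniqueness rather than being a new object. The underlying multiplier estimate for $(\mathbf U\cdot\nabla)\mathbf u$ with smooth $\mathbf U$, although routine, is exactly what makes the gain of one derivative per step possible and should be stated explicitly; it is also the precise point at which the argument would fail for non-smooth $\mathbf U$, where additional hypotheses would be required.
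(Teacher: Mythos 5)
Your proposal is correct and follows essentially the same route as the paper: start from the $\dot{\mathbf H}_\#^1\times\dot L_{2\#}$ well-posedness of Theorem~\ref{Oseen-problemTh}, rewrite the Oseen system as a Stokes system with right-hand side $\mathbf f-(\mathbf U\cdot\nabla)\mathbf u$, and bootstrap one derivative per step via the smooth-multiplier bound and the Stokes isomorphism of Theorem~\ref{th:mik1}, with the isomorphism and part~(ii) concluded exactly as in the paper. Your extra details (the rewriting $(\mathbf U\cdot\nabla)\mathbf u=\div(\mathbf u\otimes\mathbf U)$ to justify zero mean, and the explicit index bookkeeping $\sigma=\min(s,t+1)$) only make explicit what the paper leaves implicit.
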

\begin{proof}
Since  ${\mathbf U}\in {\mathbf C}^\infty_{\#\sigma}\subset {\mathbf L}_{\theta\#\sigma}$ for any
$\theta>1$,
Theorem \ref{Oseen-problemTh} implies that system \eqref{Oseen-problem}-\eqref{Oseen-problem-div} has a unique solution 
$({\bf u},p)\in \dot{\mathbf H}_\#^{s_1}\times  \dot{H}_{\#}^{s_1-1}$ with $s_1=1$.
Then $({\mathbf U}\cdot \nabla ){\mathbf u}\in  \dot{\mathbf H}_\#^{s_1-1}$ and
\begin{align}\label{E5.18-}
\|({\mathbf U}\cdot \nabla ){\mathbf u}\|_{\dot{\mathbf H}_\#^{s_1-1}}
\le C_0(\mathbf U)\|{\mathbf u}\|_{\dot{\mathbf H}_\#^{s_1}}
\end{align}
implying, due to estimate \eqref{estimate-1-wp-S-2u} in Theorem \ref{Oseen-problemTh},
\begin{multline}\label{E5.18}
\|({\mathbf U}\cdot \nabla ){\mathbf u}\|_{\dot{\mathbf H}_\#^{s_1-1}}
\le C_0(\mathbf U)\left(C_{uf}\|\mathbf f \|_{\dot{\mathbf H}_\#^{s_1-2}}+C_{ug;U}\|g\|_{\dot H_\#^{s_1-1} }\right)\\
\le C_{1s}(\mathbf U)\left(\|\mathbf f \|_{\dot{\mathbf H}_\#^{s-2}}+\|g\|_{\dot H_\#^{s-1} }\right).
\end{multline}
Hence the couple $({\mathbf u},p)$ satisfies the system
\begin{align}
\label{eq:mik16Uinf}
-\bs{\mathfrak L}{\mathbf u}+\nabla p&=\mathbf{f}^{(1)},\\
\label{eq:mik17Uinf}
{\rm{div}}\, {\mathbf u}&=g
\end{align}
with $\mathbf f^{(1)}:=\mathbf f -({\mathbf U}\cdot \nabla ){\mathbf u}\in \dot{\mathbf H}_\#^{s^{(1)}-2}$, 
where $s^{(1)}=\min\{s, s_1+1\}$.
By Theorem~\ref{th:mik1}(i), the Stokes system \eqref{eq:mik16Uinf}-\eqref{eq:mik17Uinf} has a unique solution in 
$\dot{\mathbf H}_{\#\sigma}^{\tilde s}\times \dot H_\#^{\tilde s-1}$ for any $\tilde s\le s^{(1)}$ and thus 
$({\mathbf u},p)\in \dot{\mathbf H}_{\#\sigma}^{s^{(1)}}\times \dot H_\#^{s^{(1)}-1}$
with the estimate
\begin{align}
\label{eq:mik31Uinf(1)}
\|{\mathbf u}\|_{\dot{\mathbf H}_\#^{s^{(1)}}}+\|p \|_{\dot H_\#^{s^{(1)}-1} }
\leq C^{(1)}\left(\|\mathbf f \|_{\dot{\mathbf H}_\#^{s-2}}+\|g\|_{\dot H_\#^{s-1} }\right)
\end{align}
implied by estimates \eqref{eq:mik31u}-\eqref{eq:mik31p} and \eqref{E5.18}; $C^{(1)}>0$ is a constant depending only on $C_{\mathbb A}$, $\|\mathbb A\|$, $n$, and $\mathbf U$.
If $s^{(1)}=s$,  this proves 
inclusion \eqref{E5.16incl} and estimate \eqref{eq:mik31Uinf}.

Otherwise $s^{(1)}=s_1+1<s$ and we arrange an iterative process by replacing in the previous paragraph $s_1$ with $s^{(1)}$ on each iteration until we arrive at the case $s^{(1)}=\min\{s, s_1+1\}=s$.
Note that in each iteration, $s_1$ increases by 1, which implies that the iteration process will stop after a finite number of iterations.
This proves inclusion \eqref{E5.16incl} and estimate \eqref{eq:mik31Uinf}.

The continuity of operator \eqref{eq:mik29} and estimate \eqref{E5.18-} together with representation \eqref{E5.2}  imply the continuity of operator \eqref{eq:mik29Uinf}.
Along with the existence of a continuous inverse to operator \eqref{eq:mik29Uinf} implied by estimate \eqref{eq:mik31Uinf}, this means
that  operator \eqref{eq:mik29Uinf} is an isomorphism.

 Moreover, if 
$({\mathbf f},g)\in\dot{\mathbf C}^\infty_{\#}\times \dot {\mathcal C}^\infty_\#$, item (i) implies that $({\mathbf u},p)\in \dot{\mathbf H}_{\#\sigma}^{s}\times \dot H_\#^{s-1}$ for arbitrary $s$ thus giving item (ii) of the theorem.
\end{proof}

\section{Stationary anisotropic periodic Navier-Stokes system}\label{sec:mik5}

\subsection{Existence of a weak solution to anisotropic incompressible periodic Navier-Stokes system}

In this section, using the Galerkin approximation we show the existence of a weak solution of  the anisotropic Navier-Stokes system in the incompressible case, with general data in $L_2$-based Sobolev spaces on a flat torus ${\T}$, for $n\ge 2$. 

Let us consider the Navier-Stokes system
\begin{align}
\label{eq:mik33}
-&\bs{\mathfrak L}{\mathbf u}+\nabla p+({\mathbf u}\cdot \nabla ){\mathbf u}=\mathbf{f},\\
\label{eq:mik34}
&{\rm{div}}\, {\mathbf u}=0,
\end{align}
for the couple of unknowns $({\mathbf u},p )\in \dot{\mathbf H}_{\#}^{1}\times \dot H_\#^0 $ and the given data
$\mathbf f\in \dot{\mathbf H}_{\#}^{-1} $.
As for the Stokes system, the incompressible Navier-Stokes system \eqref{eq:mik33}-\eqref{eq:mik34} can be re-written as one vector equation
\begin{align}
\label{eq:mik35}
&-\bs{\mathfrak L}{\mathbf u}+\nabla p + ({\mathbf u}\cdot \nabla ){\mathbf u}=\mathbf{f}
\end{align}
for the  unknowns $({\mathbf u},p )\in \dot{\mathbf H}_{\#\sigma}^{1}\times \dot H_{q\#}^0 $, with some $q>1$, and the given data
$\mathbf f\in \dot{\mathbf H}_{\#}^{-1} $.

Next we show the existence of a weak solution of the Navier-Stokes equation, generalising to anisotropic case the Galerkin approximation arguments from \cite[Chapter 2]{Temam2001}, cf. also \cite[Chapter 1, Section 7]{Lions1969}.

First of all, let us define the space $\widetilde {\mathbf V}_{\#\sigma}$ and its norm as
\begin{align}\label{tildeV}
\widetilde {\mathbf V}_{\#\sigma}= \dot{\mathbf H}_{\#\sigma}^{1}\cap \mathbf L_{n\#},\qquad
\|\mathbf v\|_{\widetilde{\mathbf V}_{\#\sigma}}=\left(\|\mathbf v\|^2_{\dot{\mathbf H}_\#^1}+\| \mathbf v\|^2_{\mathbf L_{n\#}} \right)^{1/2}.
\end{align}
For the adjoint operator, we have
\begin{align}\label{tildeV*}
\widetilde {\mathbf V}_{\#\sigma}^*= (\dot{\mathbf H}_{\#\sigma}^{1}\cap \mathbf L_{n\#})^*
=(\dot{\mathbf H}_{\#\sigma}^{1})^*\cup \mathbf L_{n/(n-1)\#}. 
\end{align}
If $n\in\{2,3,4\}$, then $\widetilde {\mathbf V}_{\#\sigma}= \dot{\mathbf H}_{\#\sigma}^{1}$; 
otherwise $\widetilde {\mathbf V}_{\#\sigma}$ is a proper subspace of $\dot{\mathbf H}_{\#\sigma}^{1}$.
The space $\widetilde {\mathbf V}_{\#\sigma}$ is also the closure of $\dot{\mathbf C}^\infty_{\#\sigma}$ in the norm \eqref{tildeV}.
Taking into account the mapping properties of operator \eqref{eq:mik37d}, we give, similar to \cite[Chapter 2, Eq. (1.25)]{Temam2001} the following variational formulation of the Navier-Stokes system \eqref{eq:mik33}-\eqref{eq:mik34}, i.e., equation \eqref{eq:mik35}, for any $n\ge 2$:

{\it For $\mathbf f\in \dot{\mathbf H}_{\#}^{-1} $, find ${\mathbf u}\in \dot{\mathbf H}_{\#\sigma}^{1}$ such that} 
\begin{align}
\label{eq:mik51a}
\langle a_{ij}^{\alpha \beta }E_{j\beta }({\mathbf u}),E_{i\alpha }({\mathbf v})\rangle _{\T }
+\langle({\mathbf u}\cdot \nabla ){\mathbf u}, \mathbf v\rangle_{\T}
=\langle\mathbf{f}, \mathbf v\rangle_{\T}\quad\forall\,\mathbf v\in \widetilde {\mathbf V}_{\#\sigma}.
\end{align}

Since $\dot{\mathbf C}^\infty_{\#\sigma}\subset \widetilde {\mathbf V}_{\#\sigma}$, any ${\mathbf u}\in \dot{\mathbf H}_{\#\sigma}^{1}$ satisfying the variational problem \eqref{eq:mik51a} is also a distributional solution of the Navier-Stokes system \eqref{eq:mik33}-\eqref{eq:mik34} in the sense of Leray (i.e., for any $\mathbf v\in \dot{\mathbf C}^\infty_{\#\sigma}$ in  \eqref{eq:mik51a}).

Now we are in a position to prove the following assertion.

\begin{theorem}\label{th:mik4}
Let $n\ge 2$ and condition \eqref{eq:mik2} hold.
If \,$\mathbf f\in \dot{\mathbf H}_{\#}^{-1}$, then
the anisotropic Navier-Stokes equation \eqref{eq:mik35}
has a solution $({\mathbf u},p)\in \dot{\mathbf H}_{\#\sigma}^{1} \times \dot H_{q\#}^0 $ (in the sense of distributions), where $q=2$ for $n\in\{2,3,4\}$, and $q=n/(n-2)$ for $n\ge 5$.
Moreover, the following estimate holds
\begin{align}
\label{eq:mik51ae}
\| \mathbf u\|_{\dot{\mathbf H}_\#^1}
\le \pi^{-2}C_{\mathbb A}\|\mathbf f\|_{\dot{\mathbf H}_\#^{-1}}.
\end{align}
\end{theorem}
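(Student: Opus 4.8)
The plan is to establish existence via the Galerkin method, following the classical Navier--Stokes scheme but adapted to the anisotropic operator $\bs{\mathfrak L}$ and the relaxed ellipticity condition \eqref{eq:mik2}. First I would fix the orthonormal basis of $\dot{\mathbf H}_{\#\sigma}^1$ given by the divergence-free Fourier modes $\mathbf{w}_k$ (eigenfunctions of the Stokes operator on the torus), and seek the $m$-th Galerkin approximation $\mathbf u_m = \sum_{k=1}^m c_{mk}\mathbf w_k$ solving the finite-dimensional projection of \eqref{eq:mik51a}: namely $\langle a_{ij}^{\alpha\beta}E_{j\beta}(\mathbf u_m), E_{i\alpha}(\mathbf w_\ell)\rangle_\T + \langle(\mathbf u_m\cdot\nabla)\mathbf u_m, \mathbf w_\ell\rangle_\T = \langle \mathbf f, \mathbf w_\ell\rangle_\T$ for $\ell = 1,\dots,m$. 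Existence of each $\mathbf u_m$ would follow from the Brouwer fixed point theorem (as the abstract already announces) applied to the continuous map on $\R^m$ defined by this system, using that the nonlinear term vanishes when tested against the solution itself.

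The key a priori estimate is the crux. Testing the Galerkin system with $\mathbf u_m$ and using the antisymmetry property $\langle(\mathbf u_m\cdot\nabla)\mathbf u_m, \mathbf u_m\rangle_\T = 0$ (the divergence-free cancellation established in Section~\ref{S7.3.3}(iii) and invoked already in \eqref{a-1-v2-S}), the nonlinear term drops out. The coercivity estimate \eqref{a-1-v2-S}, which combines the first Korn inequality \eqref{eq:mik15}, the trace-free ellipticity \eqref{eq:mik2}, and the norm equivalence \eqref{eq:mik14}, then yields $\pi^2 C_{\mathbb A}^{-1}\|\mathbf u_m\|_{\dot{\mathbf H}_\#^1}^2 \le \langle \mathbf f, \mathbf u_m\rangle_\T \le \|\mathbf f\|_{\dot{\mathbf H}_\#^{-1}}\|\mathbf u_m\|_{\dot{\mathbf H}_\#^1}$. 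This gives the uniform bound $\|\mathbf u_m\|_{\dot{\mathbf H}_\#^1} \le \pi^{-2}C_{\mathbb A}\|\mathbf f\|_{\dot{\mathbf H}_\#^{-1}}$, which is exactly estimate \eqref{eq:mik51ae} and, crucially, is independent of $m$.

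Next I would extract a weakly convergent subsequence $\mathbf u_m \rightharpoonup \mathbf u$ in $\dot{\mathbf H}_{\#\sigma}^1$; by the compact embedding $\dot{\mathbf H}_\#^1 \hookrightarrow \mathbf L_{2\#}$ (and into $\mathbf L_{r\#}$ for suitable $r$ via Sobolev embedding on the torus) the convergence is strong in $\mathbf L_{2\#}$, which suffices to pass to the limit in the nonlinear term $\langle(\mathbf u_m\cdot\nabla)\mathbf u_m, \mathbf v\rangle_\T = -\langle(\mathbf u_m\cdot\nabla)\mathbf v, \mathbf u_m\rangle_\T$ for each fixed basis element $\mathbf v$, using the integration-by-parts form that moves the derivative onto the smooth test function. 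A density argument then recovers \eqref{eq:mik51a} for all $\mathbf v\in\widetilde{\mathbf V}_{\#\sigma}$, and the bound \eqref{eq:mik51ae} is inherited by $\mathbf u$ through weak lower semicontinuity of the norm. Finally, recovering the pressure $p$ and its regularity class $\dot H_{q\#}^0$ (with $q=2$ for $n\in\{2,3,4\}$ and $q=n/(n-2)$ otherwise) would proceed by a de~Rham--type argument: having the momentum balance hold against all divergence-free test functions means $-\bs{\mathfrak L}\mathbf u + (\mathbf u\cdot\nabla)\mathbf u - \mathbf f$ is a gradient, and the Sobolev exponent $q$ is dictated by the integrability of the convective term $(\mathbf u\cdot\nabla)\mathbf u$, which lives in $\mathbf L_{q\#}$ by Hölder and the Sobolev embedding of $\dot{\mathbf H}_\#^1$ in dimension $n$.

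I expect the main obstacle to be the limit passage in the nonlinear convective term in high dimensions $n\ge 5$, where $\dot{\mathbf H}_\#^1$ no longer embeds into $\mathbf L_{n\#}$ and the natural test space is the proper subspace $\widetilde{\mathbf V}_{\#\sigma}$ rather than all of $\dot{\mathbf H}_{\#\sigma}^1$; one must verify that the quadratic form $\langle(\mathbf u_m\cdot\nabla)\mathbf u_m, \mathbf v\rangle_\T$ is well defined and that the strong $\mathbf L_{2\#}$ convergence, combined with the boundedness in $\mathbf L_{2n/(n-2)\#}$, genuinely allows passage to the limit against the restricted class of test functions. The trace-free relaxed ellipticity \eqref{eq:mik2} is also delicate: coercivity holds only on the divergence-free subspace, so it is essential that the Galerkin basis be chosen inside $\dot{\mathbf H}_{\#\sigma}^1$ so that \eqref{a-1-v2-S} applies directly to each $\mathbf u_m$.
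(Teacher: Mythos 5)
Your proposal follows essentially the same route as the paper's proof: a Galerkin scheme on a complete system of smooth divergence-free functions (the paper allows any such system complete in $\widetilde{\mathbf V}_{\#\sigma}$, of which your Fourier-mode basis is a valid special case), solvability of the finite-dimensional nonlinear systems via the Brouwer fixed point lemma, the uniform bound $\|\mathbf u_m\|_{\dot{\mathbf H}_\#^1}\le \pi^{-2}C_{\mathbb A}\|\mathbf f\|_{\dot{\mathbf H}_\#^{-1}}$ from the Korn/ellipticity/norm-equivalence coercivity combined with the cancellation $\langle(\mathbf u_m\cdot\nabla)\mathbf u_m,\mathbf u_m\rangle_\T=0$, weak compactness plus strong $\mathbf L_{2\#}$ convergence to pass to the limit in the convective term, and a de Rham-type recovery of the pressure with integrability of $({\mathbf u}\cdot\nabla){\mathbf u}$ dictating the exponent $q$. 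The subtleties you flag (coercivity only on the divergence-free subspace, and the test space $\widetilde{\mathbf V}_{\#\sigma}$ for $n\ge 5$) are exactly the ones the paper handles via its Lemma on limit passage and the boundedness of the quadratic operator into $\widetilde{\mathbf V}_{\#\sigma}^*$, so no gap remains.
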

\begin{proof}
As in \cite[Chapter 2, Theorem 1.2]{Temam2001} and \cite[Chapter 1, Theorem 7.1]{Lions1969}, we will use the Galerkin approximation.
First of all, let $\mathbf w_1, \mathbf w_2,\ldots,\mathbf w_l,\ldots$ be a system of linearly independent functions from 
$\dot{\mathbf C}^\infty_{\#\sigma}$ that is complete in $\widetilde {\mathbf V}_{\#\sigma}$.

For each integer $m\ge 1$, let us look for a solution
\begin{align}\label{E5.14}
\mathbf u_m=\sum_{l=1}^m\eta_{l,m}\mathbf w_l, \quad \eta_{l,m}\in\R
\end{align}
of the following discrete analogue of the variational problem \eqref{eq:mik51a},
\begin{multline}
\label{eq:mik51b}
\langle a_{ij}^{\alpha \beta }E_{j\beta }({\mathbf u}_m),E_{i\alpha }(\mathbf w_k)\rangle _{\T }
+\langle({\mathbf u}_m\cdot \nabla ){\mathbf u}_m, \mathbf w_k\rangle_{\T}\\
=\langle\mathbf{f}, \mathbf w_k\rangle_{\T}\quad\forall\, k\in \{1,\ldots,m\}.
\end{multline}
For a fixed $m$, equations \eqref{eq:mik51b} give an algebraic system of nonlinear (quadratic) equations for $\eta_{l,m}$,  $l\in \{1,\ldots,m\}$. 
Existence of a real solution of this system follows from Lemma~\ref{BL-lemma4.3}.
Indeed, let $\eta:=\{\eta_{l,m}\}_{l=1}^m$, $Q(\eta):=\{Q_{k}(\eta)\}_{k=1}^m$ denote the $m$-dimensional vectors, where
$$
Q_{k}(\eta):=\langle a_{ij}^{\alpha \beta }E_{j\beta }({\mathbf u}_m),E_{i\alpha }(\mathbf w_k)\rangle _{\T }
+\langle({\mathbf u}_m\cdot \nabla ){\mathbf u}_m, \mathbf w_k\rangle_{\T}
-\langle\mathbf{f}, \mathbf w_k\rangle_{\T},
$$
and ${\mathbf u}_m={\mathbf u}_m(\eta)$ is given by \eqref{E5.14}.
Note that $E_{jj}({\mathbf u}_m)=0$ since $\div\, {\mathbf u}_m=0$.
Then by representation \eqref{E5.14}, equality \eqref{eq:mik55}, the ellipticity condition \eqref{eq:mik2}, the first Korn inequality \eqref{eq:mik15}, and the norm equivalence inequality \eqref{eq:mik14}, we obtain
\begin{align}\label{E5.16}
(Q(\eta),\eta)&=\langle a_{ij}^{\alpha \beta }E_{j\beta }({\mathbf u}_m),E_{i\alpha }(\mathbf u_m)\rangle _{\T }
+\langle({\mathbf u}_m\cdot \nabla ){\mathbf u}_m, \mathbf u_m\rangle_{\T}
-\langle\mathbf{f}, \mathbf u_m\rangle_{\T}
\nonumber\\
&=\langle a_{ij}^{\alpha \beta }E_{j\beta }({\mathbf u}_m),E_{i\alpha }(\mathbf u_m)\rangle _{\T }
-\langle\mathbf{f}, \mathbf u_m\rangle_{\T}
\nonumber\\
&\ge C_{\mathbb A}^{-1}\|{\mathbb E}({\mathbf u_m})\|_{(L_{2\#} )^{n\times n}}^2
-\|\mathbf f\|_{\dot{\mathbf H}_\#^{-1}}\|\mathbf u_m\|_{\dot{\mathbf H}_\#^1}
\nonumber\\
&\ge \pi^2C_{\mathbb A}^{-1}\| \mathbf u_m\|^2_{\dot{\mathbf H}_\#^1}
-\|\mathbf f\|_{\dot{\mathbf H}_\#^{-1}}\|\mathbf u_m\|_{\dot{\mathbf H}_\#^1}
\nonumber\\
&=(\pi^2C_{\mathbb A}^{-1}\| \mathbf u_m\|_{\dot{\mathbf H}_\#^1}
-\|\mathbf f\|_{\dot{\mathbf H}_\#^{-1}})\|\mathbf u_m\|_{\dot{\mathbf H}_\#^1}.
\end{align}
Thus $(Q(\eta),\eta)\ge 0\ \forall\eta: |\eta|=\rho$, where $\rho$ is sufficiently large (so that 
$
\|\mathbf u_m(\eta)\|_{\dot{\mathbf H}_\#^1}
\ge C_{\mathbb A}\pi^{-2}\|\mathbf f\|_{\dot{\mathbf H}_\#^{-1}}
$
$\forall\eta: |\eta|=\rho$).
Hence by Lemma~\ref{BL-lemma4.3} there exists $\eta=\{\eta_{l,m}\}_{l=1}^m$ such that $ |\eta|\le \rho$ and $Q(\eta)=0$, and then $\mathbf u_m(\eta)$ solves \eqref{eq:mik51b}.

Multiplying equations \eqref{eq:mik51b} by $\{\eta_{k,m}\}$ and summing them up in $k\in \{1,\ldots,m\}$, we obtain 
\begin{align}
\label{eq:mik51c}
\langle a_{ij}^{\alpha \beta }E_{j\beta }({\mathbf u}_m),E_{i\alpha }(\mathbf u_m)\rangle _{\T }
+\langle({\mathbf u}_m\cdot \nabla ){\mathbf u}_m, \mathbf u_m\rangle_{\T}
=\langle\mathbf{f}, \mathbf u_m\rangle_{\T}.
\end{align}
Similar to \eqref{E5.16}, this implies
\begin{align}
\label{eq:mik51d}
\pi^2C_{\mathbb A}^{-1}\| \mathbf u_m\|^2_{\dot{\mathbf H}_\#^1}
&\le C_{\mathbb A}^{-1}\|{\mathbb E}({\mathbf u_m})\|_{(L_{2\#} )^{n\times n}}^2
\nonumber\\
&\le\langle a_{ij}^{\alpha \beta }E_{j\beta }({\mathbf u}_m),E_{i\alpha }(\mathbf u_m)\rangle _{\T }
\nonumber\\
&=\langle\mathbf{f}, \mathbf u_m\rangle_{\T}
\le \|\mathbf f\|_{\dot{\mathbf H}_\#^{-1}}\|\mathbf u_m\|_{\dot{\mathbf H}_\#^1}.
\end{align}
Thus, for any $m=1,2,\ldots$
\begin{align}
\label{eq:mik51e}
\| \mathbf u_m\|_{\dot{\mathbf H}_\#^1}
\le \pi^{-2}C_{\mathbb A}\|\mathbf f\|_{\dot{\mathbf H}_\#^{-1}}.
\end{align}

This means that the sequence $\mathbf u_m$ is bounded in $\dot{\mathbf H}_\#^1$ and in $\dot{\mathbf H}_{\#\sigma}^1$ and thus there exists a subsequence $\mathbf u_{m'}$ weakly converging in $\dot{\mathbf H}_\#^1$ and in $\dot{\mathbf H}_{\#\sigma}^1$ to a function $\mathbf u\in\dot{\mathbf H}_{\#\sigma}^1$.
On the other hand, since $\dot{\mathbf H}_\#^1$ is compactly embedded in $\dot{\mathbf L}_{2\#}$ and $\dot{\mathbf H}_{\#\sigma}^1$ is compactly embedded in $\dot{\mathbf L}_{2\#\sigma}$, there exists a subsequence of $\mathbf u_{m'}$, for which we will use the same notation, that strongly converges in $\dot{\mathbf L}_{2\#}$ and $\dot{\mathbf L}_{2\#\sigma}$ to $\mathbf u$.

Then due to Lemma~\ref{L6.2} we can take limit in \eqref{eq:mik51b} as $m\to\infty$ to obtain
\begin{align}
\label{eq:mik51f}
\langle a_{ij}^{\alpha \beta }E_{j\beta }({\mathbf u}),E_{i\alpha }(\mathbf v)\rangle _{\T }
+\langle({\mathbf u}\cdot \nabla ){\mathbf u}, \mathbf v\rangle_{\T}
=\langle\mathbf{f}, \mathbf v\rangle_{\T}
\end{align}
for any $\mathbf v\in\{\mathbf w_k\}_{k=1}^\infty$.
Since by definition the set $\mathbf v\in\{\mathbf w_k\}_{k=1}^\infty$ is complete in $\widetilde {\mathbf V}_{\#\sigma}$ and operator \eqref{eq:mik37d} 
is bounded and continuous, we conclude that equation \eqref{eq:mik51f} holds for any $\mathbf v\in \widetilde {\mathbf V}_{\#\sigma}$, that is $\mathbf u$ solves variational problem \eqref{eq:mik51a} and moreover, \eqref{eq:mik51e} implies that $\mathbf u$ satisfies estimate \eqref{eq:mik51ae}.

After ${\mathbf u}\in \dot{\mathbf H}_{\#\sigma}^{1}$ satisfying \eqref{eq:mik51a} is obtained, the pressure $p\in  \dot{\mathcal D}'_\# $ can be found from equation \eqref{eq:mik33} re-written using notation \eqref{lllE} as  
\begin{align}
\label{eq:mik33a}
&\nabla p=\mathbf{f}-({\mathbf u}\cdot \nabla ){\mathbf u}+
\boldsymbol{\mathfrak L}{\mathbf u}
\end{align}
and understood in the sense of distributions. 
In the right hand side of \eqref{eq:mik33a}, $\mathbf f\in \dot{\mathbf H}_{\#}^{-1}$, 
$\boldsymbol{\mathfrak L}{\mathbf u}\in \dot{\mathbf H}_{\#}^{-1}$, while 
$({\mathbf u}\cdot \nabla ){\mathbf u}\in \dot{\mathbf H}_{\#}^{-1}$ if $n\in\{2,3,4\}$, and $({\mathbf u}\cdot \nabla ){\mathbf u}\in \dot{\mathbf L}_{n/(n-1)\#}$ if $n\ge 5$, cf. \eqref{eq:mik37c}, \eqref{eq:mik37d}. 
This implies that in fact $p\in \dot{L}_{2\#}$ if $n\in\{2,3,4\}$, and $p\in \dot{L}_{2\#}\cup\dot{H}^1_{n/(n-1)\#}\subset\dot{L}_{n/(n-2)\#}$ if $n\ge 5$, cf. \cite[Theorem IX.3.1, Remark IX.3.1]{Galdi2011}, \cite[Section 5.1]{RRS2016}.
\end{proof}

\subsection{Solution uniqueness for the anisotropic periodic Navier-Stokes system}
In this section we show that under additional constraint on the norm of the given data the weak solution of the Navier-Stokes equation \eqref{eq:mik35}  is unique.

For the uniqueness in the non-periodic setting,  
for the isotropic case \eqref{eq:mik8} with $\lambda=0$ and $\mu =1$ cf., e.g., \cite[Lemma 3.1]{Seregin2015};  
for the anisotropic case cf. \cite[Theorem 5.4]{KMW-DCDS2021}, \cite[Theorem 7.3]{KMW-transv2021}.
\begin{theorem}
\label{NSuniqueness}
Let $n\in\{2,3,4\}$ and condition \eqref{eq:mik2} hold.
Let $\mathbf f\in \dot{\mathbf H}_{\#}^{-1}$ and
\begin{align}
\label{uniqueness}
\|\mathbf f\|_{\dot{\mathbf H}_{\#}^{-1}}<\frac{\pi^3}{\sqrt{2}}C_{\mathbb A}^{-2}C_{4\#}^{-2}, 
\end{align}
with the constants $C_{\mathbb A}$ and $C_{4\#}$ from the ellipticity condition \eqref{eq:mik2} and the Sobolev embedding inequality \eqref{E7.9}, respectively.
Then
the anisotropic Navier-Stokes equation \eqref{eq:mik35}
has a unique solution $({\mathbf u},p)\in \dot{\mathbf H}_{\#\sigma}^{1} \times \dot H_{\#}^0$.
\end{theorem}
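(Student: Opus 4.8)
The plan is to argue by contradiction, estimating the difference of two putative solutions and showing that the smallness condition \eqref{uniqueness} is incompatible with a nonzero difference. Suppose $(\mathbf u_1,p_1),(\mathbf u_2,p_2)\in \dot{\mathbf H}_{\#\sigma}^{1}\times\dot H_\#^0$ both solve \eqref{eq:mik35}, and set $\mathbf w:=\mathbf u_1-\mathbf u_2\in\dot{\mathbf H}_{\#\sigma}^{1}$. Because $n\in\{2,3,4\}$ we have $\widetilde{\mathbf V}_{\#\sigma}=\dot{\mathbf H}_{\#\sigma}^{1}$, so (by density of $\dot{\mathbf C}^\infty_{\#\sigma}$) each $\mathbf u_k$ satisfies the weak form \eqref{eq:mik51a} for every test field in $\dot{\mathbf H}_{\#\sigma}^{1}$; in particular $\mathbf w$ is admissible. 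Subtracting the two identities, using the pointwise splitting $(\mathbf u_1\cdot\nabla)\mathbf u_1-(\mathbf u_2\cdot\nabla)\mathbf u_2=(\mathbf w\cdot\nabla)\mathbf u_1+(\mathbf u_2\cdot\nabla)\mathbf w$, and taking $\mathbf v=\mathbf w$ yields
\begin{align*}
\langle a_{ij}^{\alpha\beta}E_{j\beta}(\mathbf w),E_{i\alpha}(\mathbf w)\rangle_{\T}
+\langle(\mathbf w\cdot\nabla)\mathbf u_1,\mathbf w\rangle_{\T}
+\langle(\mathbf u_2\cdot\nabla)\mathbf w,\mathbf w\rangle_{\T}=0.
\end{align*}
Since $\mathbf u_2\in\dot{\mathbf H}_{\#\sigma}^{1}$ is divergence free, the orthogonality property of Section~\ref{S7.3.3}(iii) (equivalently \eqref{eq:mik55}) gives $\langle(\mathbf u_2\cdot\nabla)\mathbf w,\mathbf w\rangle_{\T}=0$, leaving $\langle a_{ij}^{\alpha\beta}E_{j\beta}(\mathbf w),E_{i\alpha}(\mathbf w)\rangle_{\T}=-\langle(\mathbf w\cdot\nabla)\mathbf u_1,\mathbf w\rangle_{\T}$.

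Next I would bound the two sides, keeping everything in the gradient norm. For the left-hand side, the ellipticity condition \eqref{eq:mik2} and the Korn inequality \eqref{eq:mik15} give
\begin{align*}
\langle a_{ij}^{\alpha\beta}E_{j\beta}(\mathbf w),E_{i\alpha}(\mathbf w)\rangle_{\T}
\ge C_{\mathbb A}^{-1}\|\mathbb E(\mathbf w)\|_{(L_{2\#})^{n\times n}}^2
\ge \tfrac12 C_{\mathbb A}^{-1}\|\nabla\mathbf w\|_{(L_{2\#})^{n\times n}}^2 .
\end{align*}
For the right-hand side, since $n\le 4$ the embedding \eqref{E7.9} yields $\dot{\mathbf H}^1_\#\hookrightarrow\mathbf L_{4\#}$, and Hölder's inequality with exponents $(4,2,4)$ together with the lower bound in \eqref{eq:mik14} gives
\begin{align*}
\left|\langle(\mathbf w\cdot\nabla)\mathbf u_1,\mathbf w\rangle_{\T}\right|
\le \|\mathbf w\|_{\mathbf L_{4\#}}^2\,\|\nabla\mathbf u_1\|_{(L_{2\#})^{n\times n}}
\le \frac{C_{4\#}^2}{2\pi^2}\,\|\nabla\mathbf w\|_{(L_{2\#})^{n\times n}}^2\,\|\nabla\mathbf u_1\|_{(L_{2\#})^{n\times n}} .
\end{align*}

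The key quantitative input is a sharp a priori bound on $\|\nabla\mathbf u_1\|$. Testing \eqref{eq:mik51a} for $\mathbf u_1$ with $\mathbf v=\mathbf u_1$, the convective term drops by \eqref{eq:mik55}, and ellipticity, Korn, and \eqref{eq:mik14} give $\tfrac12 C_{\mathbb A}^{-1}\|\nabla\mathbf u_1\|^2\le\langle\mathbf f,\mathbf u_1\rangle_{\T}\le(\sqrt2\,\pi)^{-1}\|\mathbf f\|_{\dot{\mathbf H}_\#^{-1}}\|\nabla\mathbf u_1\|$, whence $\|\nabla\mathbf u_1\|\le\sqrt2\,\pi^{-1}C_{\mathbb A}\|\mathbf f\|_{\dot{\mathbf H}_\#^{-1}}$. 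Combining the three displays, if $\|\nabla\mathbf w\|\ne0$ I may cancel $\|\nabla\mathbf w\|^2$ to obtain $\tfrac12 C_{\mathbb A}^{-1}\le\frac{\sqrt2\,C_{\mathbb A}C_{4\#}^2}{2\pi^3}\|\mathbf f\|_{\dot{\mathbf H}_\#^{-1}}$, that is $\|\mathbf f\|_{\dot{\mathbf H}_\#^{-1}}\ge\frac{\pi^3}{\sqrt2}C_{\mathbb A}^{-2}C_{4\#}^{-2}$, contradicting \eqref{uniqueness}. Hence $\nabla\mathbf w=0$, and as $\mathbf w\in\dot{\mathbf H}^1_\#$ has zero mean, $\mathbf w=0$, i.e. $\mathbf u_1=\mathbf u_2$. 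Uniqueness of the pressure then follows from \eqref{eq:mik33a}: the identity forces $\nabla(p_1-p_2)=0$, and the zero-mean normalisation in $\dot H^0_\#$ gives $p_1=p_2$.

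The main obstacle I anticipate is the constant bookkeeping needed to land exactly on the threshold in \eqref{uniqueness}: one must work throughout with $\|\nabla\cdot\|_{(L_{2\#})^{n\times n}}$ and exploit both the Korn factor $\tfrac12$ and the sharp lower bound $2\pi^2\|\cdot\|^2_{\dot{\mathbf H}^1_\#}\le\|\nabla\cdot\|^2$ of \eqref{eq:mik14}, rather than first passing through the $\dot{\mathbf H}^1_\#$ norm, which would only produce the weaker constant $\pi^3/2$. A secondary point, genuinely using the restriction $n\in\{2,3,4\}$, is the justification that both $\langle(\mathbf u_2\cdot\nabla)\mathbf w,\mathbf w\rangle_{\T}$ and the convective term in the energy identity vanish with merely $\mathbf u_1,\mathbf u_2,\mathbf w\in\dot{\mathbf H}_{\#\sigma}^{1}\subset\mathbf L_{4\#}$; this integrability is precisely what makes the trilinear form well defined and antisymmetric, and is covered by Section~\ref{S7.3.3}(iii).
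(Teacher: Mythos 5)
Your proof is correct and follows essentially the same route as the paper's: the same splitting of the convective difference, the same use of \eqref{eq:mik55}, Korn's inequality \eqref{eq:mik15}, the embedding \eqref{E7.9}, the norm equivalence \eqref{eq:mik14}, and the same a priori gradient bound, yielding exactly the threshold constant $\pi^3 C_{\mathbb A}^{-2}C_{4\#}^{-2}/\sqrt{2}$. The only cosmetic difference is that you phrase the final step as a contradiction, while the paper states directly that inequality \eqref{uniqueness-1} under assumption \eqref{uniqueness} forces $\nabla(\mathbf u^{(1)}-\mathbf u^{(2)})=\mathbf 0$.
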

\begin{proof}
Assume that the Navier-Stokes equation \eqref{eq:mik35} and thus the variational problem \eqref{eq:mik51a} has two solutions $({\bf u}^{(1)},p^{(1)})$ and $({\bf u}^{(2)},p^{(2)})$ in the space 
$\dot{\mathbf H}_{\#\sigma}^{1}\times \dot H_{\#}^0$.
Note that  $\widetilde {\mathbf V}_{\#\sigma}= \dot{\mathbf H}_{\#\sigma}^{1}$ if $n\in\{2,3,4\}$.
Then  the first Korn inequality \eqref{eq:mik15}, the ellipticity condition \eqref{eq:mik2}, the variational formulation \eqref{eq:mik51a} and the norm equivalence inequality \eqref{eq:mik14} give for $k\in\{1,2\}$,
\begin{align}
\label{E6.27}
\|\nabla {\mathbf u^{(k)}}\|^2_{(L_{2\#})^{n\times n}}
&\le 2 \|{\mathbb E}({\mathbf u^{(k)}})\|_{(L_{2\#} )^{n\times n}}^2
\nonumber\\
&\le 2 C_{\mathbb A}\langle a_{ij}^{\alpha \beta }E_{j\beta }({\mathbf u}^{(k)}),E_{i\alpha }(\mathbf u^{(k)})\rangle _{\T }
\nonumber\\
&=2 C_{\mathbb A}\langle\mathbf{f}, \mathbf u^{(k)}\rangle_{\T}
\le 2 C_{\mathbb A}\|\mathbf f\|_{\dot{\mathbf H}_\#^{-1}}\|\mathbf u^{(k)}\|_{\dot{\mathbf H}_\#^1}
\nonumber\\
&\le \frac{\sqrt{2}}{\pi} C_{\mathbb A}\|\mathbf f\|_{\dot{\mathbf H}_\#^{-1}}\|\nabla {\mathbf u^{(k)}}\|_{(L_{2\#})^{n\times n}}.
\end{align}
Hence,
\begin{align}
\label{E6.28}
\|\nabla {\mathbf u^{(k)}}\|_{(L_{2\#})^{n\times n}}
&\le  \frac{\sqrt{2}}{\pi} C_{\mathbb A}\|\mathbf f\|_{\dot{\mathbf H}_\#^{-1}}, \quad k\in\{1,2\}.
\end{align}

The variational formulation \eqref{eq:mik51a} also implies,
\begin{multline}
\label{NS-var-eq-int-unique}
\big\langle a_{ij}^{\alpha \beta }E_{j\beta }({\bf u}^{(1)}-{\bf u}^{(2)}),E_{i\alpha }({\bf v })\big\rangle _\T\\
+\left\langle({\bf u}^{(1)}\cdot \nabla ){\bf u}^{(1)}-({\bf u}^{(2)}\cdot \nabla ){\bf u}^{(2)},{\bf v}\right\rangle _\T
=0 \quad \forall \ {\bf v}\in \dot{\mathbf H}_{\#\sigma}^{1}.
\end{multline}
Then by choosing ${\bf v}={\bf u}^{(1)}-{\bf u}^{(2)}$ in \eqref{NS-var-eq-int-unique}, we obtain
\begin{multline}
\label{NS-var-eq-int-unique-0}
\big\langle a_{ij}^{\alpha \beta }E_{j\beta }({\bf u}^{(1)}-{\bf u}^{(2)}),E_{i\alpha }({\bf u}^{(1)}-{\bf u}^{(2)})\big\rangle _\T\\
=-\left\langle\left(({\bf u}^{(1)}-{\bf u}^{(2)})\cdot \nabla )\right){\bf u}^{(1)},{\bf u}^{(1)}-{\bf u}^{(2)}\right\rangle _\T\\
-\left\langle({\bf u}^{(2)}\cdot \nabla )({\bf u}^{(1)}-{\bf u}^{(2)}),{\bf u}^{(1)}-{\bf u}^{(2)}\right\rangle _\T\,.
\end{multline}
Due to the membership of ${\bf u}^{(1)}$ and ${\bf u}^{(2)}$ in $\dot{\mathbf H}_{\#\sigma}^{1}$, relation \eqref{eq:mik55} yields
\begin{align}
\left\langle({\bf u}^{(2)}\cdot \nabla )({\bf u}^{(1)}-{\bf u}^{(2)}),{\bf u}^{(1)}-{\bf u}^{(2)}\right\rangle _\T=0\,,
\end{align}
which shows that equation \eqref{NS-var-eq-int-unique-0} reduces to
\begin{multline}
\label{NS-var-eq-int-uniqe-1}
\!\!\!\!\left\langle a_{ij}^{\alpha \beta }E_{j\beta }({\bf u}^{(1)}-{\bf u}^{(2)}),E_{i\alpha }({\bf u}^{(1)}-{\bf u}^{(2)})\right\rangle _\T\!\\
=\!-\left\langle\big( ({\bf u}^{(1)}-{\bf u}^{(2)})\cdot \nabla\big){\bf u}^{(1)},{\bf u}^{(1)}-{\bf u}^{(2)}\right\rangle _\T\,.
\end{multline}

On the other hand, in view of condition \eqref{eq:mik2} and the first Korn inequality \eqref{eq:mik15}, we deduce that
\begin{align}
\label{P-11-unique}
\|\nabla ({\bf u}^{(1)}-{\bf u}^{(2)})\|_{(L_{2\#})^{n\times n}}^2\, \leq
2C_{\mathbb A}\left\langle a_{ij}^{\alpha \beta }E_{j\beta }({\bf u}^{(1)}-{\bf u}^{(2)}),E_{i\alpha }({\bf u}^{(1)}-{\bf u}^{(2)})\right\rangle _\T\,.
\end{align}
Thus, by 
inequalities \eqref{eq:mik54-1b},  \eqref{E7.9}, \eqref{eq:mik13}, and \eqref{E6.28},
we obtain
\begin{align}
\label{NS-var-eq-int-uniqe-2}
&\left|\left\langle\left(({\bf u}^{(1)}-{\bf u}^{(2)})\cdot \nabla \right){\bf u}^{(1)},{\bf u}^{(1)}-{\bf u}^{(2)}\right\rangle _\T\right|
\nonumber\\
&\hspace{1em}\leq \|{\bf u}^{(1)}-{\bf u}^{(2)}\|^2_{\mathbf L_{4\#}}\|\nabla {\bf u}^{(1)}\|_{(L_{2\#})^{n\times n}}
\nonumber\\
&\hspace{1em}\leq C_{4\#}^2\|{\bf u}^{(1)}-{\bf u}^{(2)}\|_{{\mathbf H}_{\#}^{1}}^2
\|\nabla {\bf u}^{(1)}\|_{(L_{2\#})^{n\times n}}\nonumber\\
&\hspace{1em}\leq \frac{1}{\sqrt{2}\pi^3}C_{\mathbb A}C_{4\#}^2
\|\nabla ({\bf u}^{(1)}-{\bf u}^{(2)})\|_{(L_{2\#})^{n\times n}}^2 \|\mathbf f\|_{\dot{\mathbf H}_{\#}^{-1}}.
\end{align}
Then equalities \eqref{NS-var-eq-int-uniqe-1}-\eqref{NS-var-eq-int-uniqe-2} imply that
\begin{align}
\label{uniqueness-1}
\|\nabla ({\bf u}^{(1)}-{\bf u}^{(2)})\|_{(L_{2\#})^{n\times n}}^2
\leq \frac{\sqrt{2}}{\pi^3}C_{\mathbb A}^2C_{4\#}^2
\|\nabla ({\bf u}^{(1)}-{\bf u}^{(2)})\|_{(L_{2\#})^{n\times n}}^2 \|\mathbf f\|_{\dot{\mathbf H}_{\#}^{-1}}\,.
\end{align}
Assumption \eqref{uniqueness} shows that estimate \eqref{uniqueness-1} is possible only if ${\bf u}^{(1)}-{\bf u}^{(2)}={\bf 0}$.

Hence, equation \eqref{eq:mik35} implies $\nabla(p^{(1)}-p^{(2)})=0$ in $\Omega $. Then $p^{(1)}-p^{(2)}$ is a constant, i.e., $p^{(1)}=p^{(2)}$ in $\dot H_{\#}^0$.
\end{proof}

Note that in the second inequality in \eqref{NS-var-eq-int-uniqe-2} we used that by the Sobolev embedding theorem, 
$\|{\bf u}^{(1)}-{\bf u}^{(2)}\|_{\mathbf L_{4\#}}\leq C_{4\#}\|{\bf u}^{(1)}-{\bf u}^{(2)}\|_{{\mathbf H}_{\#}^{1}}$,
which is however not available for the dimensions $n>4$. This limited our uniqueness proof to the cases $n\in\{2,3,4\}$ only.

\subsection{Solution regularity for the anisotropic periodic Navier-Stokes system}

In this section, we show that the regularity of a solution of  the anisotropic incompressible Navier-Stokes system on ${\T}^n$, $n\in\{2,3,4\}\mg,$ is completely determined by the regularity of its right-hand side, as for the Stokes system.
To prove this we use the inclusions of the nonlinear term $({\mathbf u}^{(1)}\cdot \nabla ) {\mathbf u}$ given by Theorem~\ref{th:mik3} and the unique solvability of corresponding (linear) Stokes system along with the bootstrap argument.
This is sufficient to prove the regularity for $n\in\{2,3\}$.  However, to prove the regularity for $n=4$, we needed to accommodate more subtle norm estimates from \cite{Gerhardt1979} first.

\begin{theorem}\label{th:mik5}
Let $n\ge 2$ and condition \eqref{eq:mik2} hold.

(i) Let $s_1>{ -1+n/2}$. 
If $({\mathbf u},p)\in \dot{\mathbf H}_{\#\sigma}^{s_1} \times \dot H_\#^{s_1-1} $ is a solution of the anisotropic Navier-Stokes equation \eqref{eq:mik35} with a right hand side $\mathbf f\in \dot{\mathbf H}_{\#}^{s_2-2} $, where $s_2>s_1$,
then  $({\mathbf u},p)\in \dot{\mathbf H}_{\#\sigma}^{s_2} \times \dot H_\#^{s_2-1} $.

(ii) Moreover, if ${\mathbf f}\in\dot{\mathbf C}_\#^\infty$
then $({\mathbf u},p)\in\dot{\mathbf C}_\#^\infty\times \dot {\mathcal C}^\infty_\#$.
\end{theorem}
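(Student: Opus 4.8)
The plan is to run a bootstrap argument modelled on the proof of Theorem~\ref{th:mik1-Os-inf}, treating the nonlinear term $({\mathbf u}\cdot\nabla){\mathbf u}$ as a known forcing and repeatedly invoking the Stokes regularity of Theorem~\ref{th:mik1}. First I would rewrite the Navier--Stokes equation \eqref{eq:mik35} as the linear Stokes system
\begin{align*}
-\bs{\mathfrak L}{\mathbf u}+\nabla p={\mathbf f}^{(1)},\qquad \div\,{\mathbf u}=0,
\end{align*}
with right-hand side ${\mathbf f}^{(1)}:={\mathbf f}-({\mathbf u}\cdot\nabla){\mathbf u}$. The hypothesis $s_1>-1+n/2$ is precisely what is needed for the convection estimates of Theorem~\ref{th:mik3} to place $({\mathbf u}\cdot\nabla){\mathbf u}$ in $\dot{\mathbf H}_\#^{r}$ with $r=\min\{s_1-1,\,2s_1-1-n/2\}$, the point being that $r>s_1-2$ strictly under this hypothesis, so that ${\mathbf f}^{(1)}\in\dot{\mathbf H}_\#^{s^{(1)}-2}$ with $s^{(1)}:=\min\{s_2,\,r+2\}>s_1$.

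Since the Fourier coefficients of $({\mathbf u},p)$ are pinned down by the Stokes representation \eqref{eq:mik22} applied to the data $({\mathbf f}^{(1)},0)$, Theorem~\ref{th:mik1}(i) then upgrades the solution to $({\mathbf u},p)\in\dot{\mathbf H}_{\#\sigma}^{s^{(1)}}\times\dot H_\#^{s^{(1)}-1}$, together with the quantitative bounds \eqref{eq:mik31u}--\eqref{eq:mik31p}. If $s^{(1)}=s_2$ we are done; otherwise I would iterate, replacing $s_1$ by $s^{(1)}$ on each pass. The key quantitative observation is that the regularity gain per step, $s^{(1)}-s_1=\min\{1,\,s_1+1-n/2\}$, is strictly positive precisely because $s_1>-1+n/2$, and that the successive gains do not shrink (once the current smoothness index reaches $n/2$ the gain becomes a full unit). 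Hence the increasing sequence of indices advances by increments bounded below by the fixed positive number $\min\{1,s_1+1-n/2\}$, reaches $s_2$ after finitely many iterations, and part~(i) follows.

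For part~(ii), the membership ${\mathbf f}\in\dot{\mathbf C}^\infty_\#=\bigcap_{s}\dot{\mathbf H}_\#^{s}$ means ${\mathbf f}\in\dot{\mathbf H}_\#^{s_2-2}$ for every $s_2$, so part~(i) yields $({\mathbf u},p)\in\dot{\mathbf H}_{\#\sigma}^{s_2}\times\dot H_\#^{s_2-1}$ for arbitrarily large $s_2$; invoking $\bigcap_s\dot H_\#^s=\dot{\mathcal C}^\infty_\#$ then gives $({\mathbf u},p)\in\dot{\mathbf C}^\infty_\#\times\dot{\mathcal C}^\infty_\#$.

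I expect the main obstacle to be the bookkeeping of the nonlinear regularity gain near the borderline $s_1=-1+n/2$: one must verify that the Sobolev multiplication estimate underlying Theorem~\ref{th:mik3} genuinely tolerates the product $\dot{\mathbf H}_\#^{s_1}\cdot\dot{\mathbf H}_\#^{s_1-1}$ at the claimed order (with at worst an arbitrarily small loss that is harmless in the iteration since each gain stays strictly positive). This is the same endpoint phenomenon flagged for $n=4$, where the natural starting index $s_1=1$ coincides with the threshold $-1+n/2=1$ and the sharper estimates from \cite{Gerhardt1979} are required to enter the regime $s_1>-1+n/2$ in the first place; within the present theorem that regime is assumed at the outset, so the iteration itself is the routine part.
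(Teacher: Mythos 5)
Your proposal is correct and follows essentially the same bootstrap as the paper's proof: rewrite \eqref{eq:mik35} as a Stokes system with right-hand side $\mathbf f-({\mathbf u}\cdot\nabla){\mathbf u}$, place the convection term via Theorem~\ref{th:mik3}, apply the Stokes solvability/uniqueness (Corollary~\ref{cor:mik1}), and iterate with per-step gains bounded below. The only point to tidy is that the borderline needing a small loss is $s_1=n/2$ (where Theorem~\ref{th:mik3} is silent and your gain formula $\min\{1,\,s_1+1-n/2\}$ overstates the gain as a full unit), not $s_1=-1+n/2$; the paper handles it by taking $t_1=s_1-3/2\in(s_1-2,\,s_1-1)$ there, so the gain is $\delta=\min\{s_1+1-n/2,\,1,\,1/2\}$ --- exactly the \emph{arbitrarily small loss} you anticipated, and harmless for the finiteness of the iteration since the borderline is crossed at most once.
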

\begin{proof}
(i) Let $({\mathbf u},p)\in \dot{\mathbf H}_{\#\sigma}^{s_1} \times \dot H_\#^{s_1 -1} $ be a solution of  \eqref{eq:mik35} with  $\mathbf f\in \dot{\mathbf H}_{\#}^{s_2-2} $.
Then by Theorem~\ref{th:mik3}, for the nonlinear term we have the inclusion $({\mathbf u}^{(1)}\cdot \nabla )\mathbf u\in \dot{\mathbf H}_\#^{t_1}$ with $t_1=2s_1-1-n/2$ if $s_1<n/2$, with $t_1=s_1-1$ if $s_1>n/2$, and with any $t_1\in (s_1-2,s_1-1)$ (and we can further use $t_1=s_1-3/2$ for certainty) if $s_1=n/2$. 
Hence the couple $({\mathbf u},p)$ satisfies the equation
\begin{align}
\label{eq:mik53}
&-\bs{\mathfrak L}{\mathbf u}+\nabla p=\mathbf{f}^{(1)}
\end{align}
with $\mathbf f^{(1)}:=\mathbf f -({\mathbf u}^{(1)}\cdot \nabla )\mathbf u\in \dot{\mathbf H}_\#^{s^{(1)}-2}$, where $s^{(1)}=\min\{s_2, t_1+2\}$.
By Corollary~\ref{cor:mik1}(i), the linear equation \eqref{eq:mik53} has a unique solution in 
$\dot{\mathbf H}_{\#\sigma}^{s}\times \dot H_\#^{s-1}$ for any $s\le s^{(1)}$ and thus 
$({\mathbf u},p)\in \dot{\mathbf H}_{\#\sigma}^{s^{(1)}}\times \dot H_\#^{s^{(1)}-1}$.
If $s^{(1)}=s_2$, which we call case (a),  this proves item (i) of the theorem.

Otherwise we have case (b), when $s^{(1)}<s_2$,  i.e., $s^{(1)}=t_1+2$,
by the definition of $s^{(1)}$.
Then  we arrange an iterative process by replacing $s_1$ with $s^{(1)}=t_1+2$ on each iteration until we arrive at case (a), thus proving item (i) of the theorem.
Note that in case (b),
\begin{align*}
s^{(1)}-s_1\ge \delta:=\min\{s_1+1-n/2, 1, 1/2\}>0
\end{align*} 
in the first iteration, and $\delta$ does not decrease in the next iterations since  $s_1$ increases.
This implies that the iteration process will reach the case (a) and stop after a finite number of iterations. 

(ii) If ${\mathbf f}\in\dot{\mathbf C}_\#^\infty$, then for any $s_2\in\R$ we have $\mathbf f\in \dot{\mathbf H}_{\#}^{s_2-2} $  and item (i) implies that $({\mathbf u},p)\in \dot{\mathbf H}_{\#\sigma}^{s_2} \times \dot H_\#^{s_2-1} $. 
Hence $({\mathbf u},p)\in\dot{\mathbf C}_\#^\infty\times \dot {\mathcal C}^\infty_\#$.
\end{proof}

Combining Theorems \ref{th:mik4} and \ref{th:mik5}, we obtain the following assertion on existence and regularity of solution to the Navier-Stokes system on torus for $n\in \{2,3\}$.
\begin{theorem}\label{th:mik6}
Let $n\in \{2,3\}$ and condition \eqref{eq:mik2} hold.

(i) If $\mathbf f\in \dot{\mathbf H}_{\#}^{s-2} $, $s\ge 1$, then the anisotropic Navier-Stokes equation \eqref{eq:mik35} has a solution 
$({\mathbf u},p)\in \dot{\mathbf H}_{\#\sigma}^{s} \times \dot H_\#^{s-1} $.

(ii) Moreover, if ${\mathbf f}\in\dot{\mathbf C}_\#^\infty$ then equation \eqref{eq:mik35} has a solution 
$({\mathbf u},p)\in\dot{\mathbf C}_\#^\infty\times \dot {\mathcal C}^\infty_\#$.
\end{theorem}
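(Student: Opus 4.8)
The plan is to obtain this statement with essentially no new work, by chaining the existence result of Theorem~\ref{th:mik4} with the bootstrap regularity of Theorem~\ref{th:mik5}; the only genuine content is checking that the base regularity produced by the former lies above the threshold required by the latter, which is exactly what pins the dimension to $n\in\{2,3\}$. First I would observe that for $s\ge 1$ one has $s-2\ge -1$, hence the embedding $\dot{\mathbf H}_\#^{s-2}\subset\dot{\mathbf H}_\#^{-1}$ gives $\mathbf f\in\dot{\mathbf H}_\#^{-1}$, so Theorem~\ref{th:mik4} applies and furnishes a weak solution $(\mathbf u,p)\in\dot{\mathbf H}_{\#\sigma}^{1}\times\dot H_\#^{0}$ of equation \eqref{eq:mik35} (recall $q=2$ for $n\in\{2,3,4\}$). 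This is the base regularity $s_1=1$.

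The crucial step is to verify that $s_1=1$ satisfies the hypothesis $s_1>-1+n/2$ of Theorem~\ref{th:mik5}(i). For $n=2$ the threshold $-1+n/2$ equals $0$, and for $n=3$ it equals $1/2$, so $s_1=1$ strictly exceeds it in both cases. (For $n=4$ the threshold is exactly $1$, so the bootstrap cannot be launched from the energy-class solution, which is precisely why that case is excluded here and is treated separately via the finer estimates of Gerhardt.) If $s=1$ there is nothing more to prove. If $s>1$, I would apply Theorem~\ref{th:mik5}(i) with $s_2=s>s_1=1$ to upgrade the solution to $(\mathbf u,p)\in\dot{\mathbf H}_{\#\sigma}^{s}\times\dot H_\#^{s-1}$, which is item (i).

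For item (ii), if $\mathbf f\in\dot{\mathbf C}_\#^\infty$ then $\mathbf f\in\dot{\mathbf H}_\#^{s-2}$ for every $s\in\R$, so item (i) places the solution in $\dot{\mathbf H}_{\#\sigma}^{s}\times\dot H_\#^{s-1}$ for all $s$; invoking the identity $\bigcap_{s\in\R}\dot H_\#^s=\dot{\mathcal C}^\infty_\#$ recorded in Section~\ref{sec:mik3} then yields $(\mathbf u,p)\in\dot{\mathbf C}_\#^\infty\times\dot{\mathcal C}^\infty_\#$, which is item (ii). The main obstacle is therefore only the threshold bookkeeping of the previous paragraph: no new a priori estimate is needed beyond those already established, and the entire argument is a transitivity of the two preceding theorems once the compatibility $1>-1+n/2$ is confirmed for $n\in\{2,3\}$.
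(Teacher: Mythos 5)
Your proposal is correct and is essentially the paper's own argument: the paper proves this theorem in one line by "combining Theorems \ref{th:mik4} and \ref{th:mik5}", and your write-up simply makes explicit the details that combination requires (the embedding $\dot{\mathbf H}_\#^{s-2}\subset\dot{\mathbf H}_\#^{-1}$ for $s\ge 1$, the threshold check $1>-1+n/2$ for $n\in\{2,3\}$ that launches the bootstrap, and the intersection identity for the smooth case). Your parenthetical remark on why $n=4$ fails at the threshold also matches the paper's own comment following the theorem.
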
 

Note that in the {\em isotropic case} \eqref{eq:mik7} with $\lambda=0$, similar results for the Navier-Stokes system in flat torus as well as in domains of $\R^n$  are available, e.g., in \cite{Galdi2011, RRS2016, Seregin2015, Sohr2001,  Temam2001}.

One can easily check that the arguments leading to the regularity Theorem~\ref{th:mik6} are at this stage not applicable for $n\ge 4$ since to apply them we would need the existence of $({\mathbf u},p)$ in $\dot{\mathbf H}_{\#\sigma}^{s_1} \times \dot H_\#^{s_1-1} $ for $s_1>{ -1+n/2}$, which Theorem  \ref{th:mik4} does not provide.
Nevertheless, in the following assertion it appeared to be possible to   accommodate arguments of \cite{Gerhardt1979} 
to our anisotropic periodic setting  and  to prove the solution existence with $s_1=2$ and then the regularity for $n=4$.
\begin{theorem}\label{th:mik7}
Let $n\in \{2,3,4\}$  and condition \eqref{eq:mik2} hold.
If $\mathbf f\in \dot{\mathbf H}_{\#}^{0} $, then the anisotropic Navier-Stokes equation \eqref{eq:mik35} has a solution 
$({\mathbf u},p)\in \dot{\mathbf H}_{\#\sigma}^{2} \times \dot H_{\#}^{1}$
and the estimate 
\begin{align}
\label{eq:mik5.23}
\| \mathbf u\|_{\dot{\mathbf H}_{\#}^2} + \|p\|_{\dot{H}_{\#}^1}
\le  C_1\|\mathbf f \|_{\dot{\mathbf H}_{\#}^{0}} 
+C_{-1} \|\mathbf f\|^2_{\dot{\mathbf H}_{\#}^{-1}}
\end{align}
holds for some constants $C_1, C_{-1}\ge 0$.
\end{theorem}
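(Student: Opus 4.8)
The plan is to separate the subcritical dimensions from the critical one. For $n\in\{2,3\}$ the claim is already contained in Theorem~\ref{th:mik6}(i) taken with $s=2$, so that $\mathbf f\in\dot{\mathbf H}_\#^{0}$ yields $(\mathbf u,p)\in\dot{\mathbf H}_{\#\sigma}^{2}\times\dot H_\#^{1}$, while~\eqref{eq:mik5.23} comes out by tracking the constants through that bootstrap, the quadratic term reflecting the quadratic nonlinearity controlled through the a~priori bound~\eqref{eq:mik51ae}. The substance of the theorem is the critical case $n=4$, where $-1+n/2=1$ and the bootstrap of Theorem~\ref{th:mik5} cannot be started from the weak solution of Theorem~\ref{th:mik4}. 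Here I would rerun the Galerkin scheme of the proof of Theorem~\ref{th:mik4}, but choosing for $\{\mathbf w_k\}$ the real divergence-free trigonometric fields built from $e^{2\pi i\mathbf x\cdot\bs\xi}\mathbf e$, $\bs\xi\in\dot\Z^n$, $\mathbf e\perp\bs\xi$: these belong to $\dot{\mathbf C}^\infty_{\#\sigma}$, are complete in $\widetilde{\mathbf V}_{\#\sigma}=\dot{\mathbf H}_{\#\sigma}^1$, are eigenfields of $-\Delta$, and --- decisively --- satisfy $-\Delta\mathbf u_m\in\mathrm{span}\{\mathbf w_1,\dots,\mathbf w_m\}$. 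Lemma~\ref{BL-lemma4.3} then provides $\mathbf u_m$ solving~\eqref{eq:mik51b} with the uniform bound~\eqref{eq:mik51e}.

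Next I would derive a uniform $\dot{\mathbf H}_\#^2$ bound by testing~\eqref{eq:mik51b} with the admissible field $\mathbf v=-\Delta\mathbf u_m$, which annihilates the pressure. The viscous term equals $\langle\boldsymbol{\mathfrak L}\mathbf u_m,\Delta\mathbf u_m\rangle$ and, written in Fourier coefficients with the extra weight $|\bs\xi|^2$ coming from $-\Delta$ and using $\widehat{\mathbf u}_m(\bs\xi)\perp\bs\xi$, is bounded below exactly as in the proof of Lemma~\ref{ADN-system-ext} (via the bilinear form $a_0$ and the ellipticity~\eqref{eq:mik2}) by $c\,C_{\mathbb A}^{-1}\|\mathbf u_m\|_{\dot{\mathbf H}_\#^2}^2$; this is the anisotropic substitute for the coercivity of $-\Delta$ and is the only place where the symmetry and ellipticity of $\mathbb A$ enter. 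The forcing term $\langle\mathbf f,-\Delta\mathbf u_m\rangle\le\|\mathbf f\|_{\dot{\mathbf H}_\#^{0}}\|\Delta\mathbf u_m\|_{\dot{\mathbf H}_\#^{0}}$ is absorbed by Young's inequality. Everything then reduces to the convective term $\langle(\mathbf u_m\cdot\nabla)\mathbf u_m,-\Delta\mathbf u_m\rangle$, which after integration by parts and use of $\div\,\mathbf u_m=0$ becomes a cubic form in $\nabla\mathbf u_m$ dominated by $\|\nabla\mathbf u_m\|_{\mathbf L_{3\#}}^3$ --- precisely the quantity controlled in~\cite{Gerhardt1979}.

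The main obstacle is this convective term in dimension $n=4$. The straightforward bound $\|\nabla\mathbf u_m\|_{\mathbf L_{3\#}}^3\lesssim\|\mathbf u_m\|_{\dot{\mathbf H}_\#^1}\|\mathbf u_m\|_{\dot{\mathbf H}_\#^2}^2$ is sharp here and carries no interpolation gain, because both $\dot{\mathbf H}_\#^1\hookrightarrow\mathbf L_{4\#}$ and the (failing) embedding $\dot{\mathbf H}_\#^2\hookrightarrow\mathbf L_{\infty\#}$ are borderline for $n=4$; it only delivers an inequality of the shape $(c\,C_{\mathbb A}^{-1}-C\|\mathbf u_m\|_{\dot{\mathbf H}_\#^1})\|\mathbf u_m\|_{\dot{\mathbf H}_\#^2}^2\le C\|\mathbf f\|_{\dot{\mathbf H}_\#^{0}}\|\mathbf u_m\|_{\dot{\mathbf H}_\#^2}$, i.e.\ an $H^2$ estimate conditional on a smallness of the data. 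Removing that restriction for arbitrary $\mathbf f$ is exactly the subtle point I would import from~\cite{Gerhardt1979}: adapting those finer norm estimates of the trilinear term to the anisotropic periodic framework so as to produce, uniformly in $m$, a genuine bound $\|\mathbf u_m\|_{\dot{\mathbf H}_\#^2}\le C_1\|\mathbf f\|_{\dot{\mathbf H}_\#^{0}}+C_{-1}\|\mathbf f\|_{\dot{\mathbf H}_\#^{-1}}^2$, where the quadratic term again reflects the a~priori control~\eqref{eq:mik51e} of $\|\mathbf u_m\|_{\dot{\mathbf H}_\#^1}$ through $\|\mathbf f\|_{\dot{\mathbf H}_\#^{-1}}$. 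I expect this to be the hardest and most technical step.

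Finally, the uniform bound lets me extract a subsequence converging weakly in $\dot{\mathbf H}_{\#\sigma}^2$ and, by the compact embedding $\dot{\mathbf H}_{\#\sigma}^2\hookrightarrow\dot{\mathbf H}_{\#\sigma}^1\hookrightarrow\mathbf L_{4\#}$, strongly in $\mathbf L_{4\#}$ with $\nabla\mathbf u_{m'}\to\nabla\mathbf u$ in $\mathbf L_{2\#}$; this suffices to pass to the limit in the nonlinear term as in Lemma~\ref{L6.2}, so the limit $\mathbf u\in\dot{\mathbf H}_{\#\sigma}^2$ solves the variational problem~\eqref{eq:mik51a} and inherits the $H^2$ bound by weak lower semicontinuity. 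Since $\mathbf u\in\dot{\mathbf H}_\#^2$ now gives $(\mathbf u\cdot\nabla)\mathbf u\in\dot{\mathbf L}_{2\#}=\dot{\mathbf H}_\#^{0}$ (it is mean-zero, and $\|(\mathbf u\cdot\nabla)\mathbf u\|_{\mathbf L_{2\#}}\le\|\mathbf u\|_{\mathbf L_{4\#}}\|\nabla\mathbf u\|_{\mathbf L_{4\#}}<\infty$ by the critical embedding~\eqref{E7.9}), I would recover the pressure and close~\eqref{eq:mik5.23} by applying the Stokes isomorphism of Corollary~\ref{cor:mik1}(i) with $s=2$ to the right-hand side $\mathbf f-(\mathbf u\cdot\nabla)\mathbf u$, obtaining $p\in\dot H_\#^{1}$ together with the stated bound on $\|\mathbf u\|_{\dot{\mathbf H}_\#^2}+\|p\|_{\dot H_\#^1}$.
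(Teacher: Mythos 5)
Your proposal has a genuine gap at precisely the step you defer to ``importing from \cite{Gerhardt1979}'', and the gap is structural, not technical. In your Galerkin scheme the convecting field in the troublesome trilinear term is $\mathbf u_m$ itself, so any Gerhardt-type estimate must be applied with $u=\mathbf u_m$ and $w=\nabla\widetilde{\mathbf u}$ where the ``$u$'' slot varies with $m$. But the whole content of the Gerhardt estimates (Lemmas \ref{L6.4}--\ref{L6.5} in the paper) is that the absorption constant $\delta$ can be made small \emph{uniformly} only when $u$ ranges over a \emph{compact} subset $K_{\theta\#}$ of $\mathbf L_{\theta\#}$ (for $n=4$ one needs $\theta\ge 4$); for a single fixed $u$ the constant $c_\epsilon(u)$ depends on $u$. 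Your Galerkin sequence $\{\mathbf u_m\}$ is only \emph{bounded} in $\dot{\mathbf H}_\#^1$, and in dimension $4$ the embedding $H^1_\#\hookrightarrow L_{4\#}$ is the critical, non-compact one, so $\{\mathbf u_m\}$ is not precompact in $\mathbf L_{4\#}$ and Lemma \ref{L6.5} simply does not apply. This is why your own computation stalls at the conditional estimate $(c\,C_{\mathbb A}^{-1}-C\|\mathbf u_m\|_{\dot{\mathbf H}_\#^1})\|\mathbf u_m\|^2_{\dot{\mathbf H}_\#^2}\le\dots$: within the nonlinear Galerkin framework there is no mechanism to remove the smallness restriction, and no amount of refinement of the trilinear estimate fixes this, because the needed splitting of $u$ into a small-$L_4$ part plus an $L_\infty$ part cannot be done uniformly over a merely bounded set in $L_4$.

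The paper's proof avoids this by a linearization device that your plan lacks. It starts from the weak solution $({\mathbf u},p)$ of Theorem \ref{th:mik4} (a \emph{fixed} function), approximates it by smooth fields $\mathbf u_k\to\mathbf u$ strongly in $\dot{\mathbf H}^1_{\#\sigma}$ --- so that $\{\mathbf u_k\}$ \emph{is} a compact subset of $\mathbf L_{\theta\#}$ --- and solves the \emph{linear} Oseen equation $-\bs{\mathfrak L}\widetilde{\mathbf u}_k+\nabla\widetilde p_k=\mathbf f-(\mathbf u_k\cdot\nabla)\widetilde{\mathbf u}_k$, whose solution lies in $\dot{\mathbf H}^2_{\#\sigma}\times\dot H^1_\#$ by Theorem \ref{th:mik1-Os-inf}. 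Because the convecting field $\mathbf u_k$ and the estimated field $\widetilde{\mathbf u}_k$ are now decoupled, Lemma \ref{L6.5} applies with $K_{\theta\#}=\{\mathbf u_k\}$ and yields the uniform bound \eqref{E5.25a}; weak limits then solve the Oseen equation with convection $\mathbf u$, and the Oseen uniqueness of Theorem \ref{Oseen-problemTh}(ii) identifies the limit with $(\mathbf u,p)$, giving the $H^2\times H^1$ regularity of the original weak solution. To repair your proof you would have to insert exactly this step (or prove strong $\dot{\mathbf H}^1_\#$ convergence of a Galerkin subsequence, which you do not address). A secondary issue: for $n=3$ your claim that \eqref{eq:mik5.23} follows from the bootstrap of Theorem \ref{th:mik6} ``by tracking constants'' is not right either --- the bootstrap there needs two iterations ($s_1=1\to 3/2\to 2$) and produces bounds polynomial of degree four in the data norms, not the stated quadratic form; the paper obtains \eqref{eq:mik5.23} for all $n\in\{2,3,4\}$ at once from the linearized scheme.
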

\begin{proof}
Let $({\mathbf u},p)\in \dot{\mathbf H}_{\#\sigma}^{1} \times \dot H_{\#}^{0}$ be the solution of \eqref{eq:mik35} provided by Theorem~\ref{th:mik4} and thus satisfying \eqref{eq:mik51ae}. 
Let $\mathbf u_k\in\dot {\mathbf C}^\infty_{\#\sigma}$ be a sequence 
converging to $\mathbf u$ in  $\dot{\mathbf H}_{\#\sigma}^{1}$ and  let us consider the following Oseen equation ({\mg a} linearised version of equation \eqref{eq:mik35}) for 
$(\widetilde{\mathbf  u}_k,\widetilde{p}_k)$,
\begin{align}
\label{eq:mik35lin}
&-\bs{\mathfrak L}\widetilde{\mathbf  u}_k+\nabla \widetilde{p}_k
=\mathbf{f}-({\mathbf u}_k\cdot \nabla )\widetilde{\mathbf  u}_k.
\end{align}
By Theorem \ref{th:mik1-Os-inf}, for every $k$ there exists a solution $(\widetilde{\mathbf  u}_k,\widetilde{p}_k)\in \dot{\mathbf H}_{\#\sigma}^{2} \times \dot H_{\#}^{1}$ of the linear Oseen equation \eqref{eq:mik35lin}. 
Moreover, since $\mathbf u_k\in\dot {\mathbf C}^\infty_{\#\sigma}$, estimate \eqref{estimate-1-wp-S-2u} from Theorem \ref{Oseen-problemTh} implies that
\begin{align}
\label{eq:mik51ek}
\| \widetilde{\mathbf  u}_k\|_{\dot{\mathbf H}_{\#}^1}
\le C_{\mathbb A}\pi^{-2}\|\mathbf f\|_{\dot{\mathbf H}_\#^{-1}}.
\end{align}

In addition, considering \eqref{eq:mik35lin} as a linear anisotropic Stokes equation with a given right hand side, by Corollary \ref{cor:mik1} we obtain the estimate
\begin{align}\label{E5.25}
\|\widetilde{\mathbf  u}_k\|_{\dot{\mathbf H}_{\#\sigma}^{2}}+\|\widetilde{p}_k \|_{\dot H_{\#}^{1}}
\leq C\left(\|\mathbf f \|_{\dot{\mathbf H}_{\#}^{0}} 
+\|({\mathbf u}_k\cdot \nabla )\widetilde{\mathbf  u}_k \|_{\dot{\mathbf H}_{\#}^{0}}\right)
\end{align}
with $C=C_{uf}+C_{pf}$.

Let $\theta\in\R$ be such that $2<\theta<\infty$ if $n=2$, while  $\theta=n$ if $n\in\{3,4\}$.
Employing Lemma \ref{L6.5} with ${\mathbf u}_k$ for $u$, the sequence $\{{\mathbf u}_k\}$ for $K_{\theta\#}$, and $\nabla\widetilde{\mathbf  u}_k$ for $w$, we obtain by \eqref{eq:mik14} and \eqref{eq:mik51ek} that
for any $\delta>0$ there exists a constant $\widetilde C_\delta{\mg =C_\delta(K_{\theta\#})}>0$ such that
\begin{multline}\label{E5.28}
\|({\mathbf u}_k\cdot \nabla )\widetilde{\mathbf  u}_k \|_{\dot{\mathbf H}_{\#}^{0}}
\le\delta  \|\nabla\widetilde{\mathbf  u}_k\|_{(H^1_{\#})^{n\times n}}
+\widetilde C_\delta\|{\mathbf u}_k\|_{\mathbf L_{\theta\#}} \|\nabla\widetilde{\mathbf  u}_k\|_{(L_{2\#})^{n\times n}}\\
\le 2\pi\delta \|\widetilde{\mathbf  u}_k\|_{\dot{\mathbf H}^2_{\#}}
+2\pi \widetilde C_\delta\|{\mathbf u}_k\|_{\mathbf L_{\theta\#}}  \|\widetilde{\mathbf  u}_k\|_{\dot{\mathbf H}^1_{\#}}\\
\le 2\pi\delta  \|\widetilde{\mathbf  u}_k\|_{\dot{\mathbf H}^2_{\#}}
+2\pi^{-1}C_{\mathbb A}C_{\theta\#} \widetilde C_\delta\|{\mathbf u}_k\|_{\dot{\mathbf H}^1_{\#}}\|\mathbf f\|_{\dot{\mathbf H}_\#^{-1}}.
\end{multline}
Here we also took into account 
that for $2\le n \le 4$ there exists a constant $C_{\theta\#}>0$ independent of $\mathbf v$, such that
$
\|{\mathbf v}\|_{\mathbf L_{\theta\#}} \le C_{\theta\#}\|{\mathbf v}\|_{{\mathbf H}^1_{\#}}
$
for any  $\mathbf v\in {\mathbf H}^1_{\#}$, due to the Sobolev embedding theorem.

Since ${\mathbf u}_k$ converges to ${\mathbf u}$ in $\dot{\mathbf H}_{\#\sigma}^{1}$, estimate \eqref{eq:mik51ae} means that there exists $k_0\in \N$ such that for $k\ge k_0$ (which we will further assume)
$$
\|{\mathbf u}_k\|_{\dot{\mathbf H}^1_{\#}}\le 2\|{\mathbf u}\|_{\dot{\mathbf H}^1_{\#}}
\le 2\pi^{-2}C_{\mathbb A}\|\mathbf f\|_{\dot{\mathbf H}_\#^{-1}}
$$
and hence \eqref{E5.28} implies
\begin{align}\label{E6.21}
\|({\mathbf u}_k\cdot \nabla )\widetilde{\mathbf  u}_k \|_{\dot{\mathbf H}_{\#}^{0}}
\le 2\pi\delta  \|\widetilde{\mathbf  u}_k\|_{\dot{\mathbf H}^2_{\#}}
+4\pi^{-3}C_{\mathbb A}^2C_{\theta\#} \widetilde C_\delta \|\mathbf f\|^2_{\dot{\mathbf H}_\#^{-1}}.
\end{align}
Substituting \eqref{E6.21} in \eqref{E5.25} with $\delta=1/(4\pi C)$ 
we obtain
\begin{align}\label{E5.25a}
\|\widetilde{\mathbf  u}_k\|_{\dot{\mathbf H}_{\#\sigma}^{2}}+\|\widetilde{p}_k \|_{\dot H_{\#}^{1}}
\leq 2C\|\mathbf f \|_{\dot{\mathbf H}_{\#}^{0}} 
+  C_{-1} \|\mathbf f\|^2_{\dot{\mathbf H}_{\#}^{-1}},
\end{align}
where $C_{-1}:=8\pi^{-3}CC_{\mathbb A}^2C_{\theta\#} \widetilde C_\delta $.

Together with \eqref{eq:mik35lin} this implies that there exist subsequences of $\{\widetilde{\mathbf  u}_k\}$ and $\{\widetilde{p}_k\}$ weakly converging in 
${\dot{\mathbf H}_{\#\sigma}^{2}}$ and $\dot H_{\#}^{1}$, respectively, 
to a weak solution
$(\widetilde{\mathbf  u},\widetilde{p})\in {\dot{\mathbf H}_{\#\sigma}^{2}}\times\dot H_{\#}^{1}\subset \dot{\mathbf H}_{\#\sigma}^{1} \times \dot H_{\#}^{0}$ 
of the Oseen equation  
\begin{align}
\label{eq:mik35lim}
&-\bs{\mathfrak L}\widetilde{\mathbf  u}+\nabla \widetilde{p}=\mathbf{f}-({\mathbf u}\cdot \nabla )\widetilde{\mathbf  u}
\end{align} 
satisfying the estimate
\begin{align}\label{E5.25b}
\|\widetilde{\mathbf  u}\|_{\dot{\mathbf H}_{\#\sigma}^{2}}+\|\widetilde{p} \|_{\dot H_{\#}^{1}}
\leq 2C\|\mathbf f \|_{\dot{\mathbf H}_{\#}^{0}} 
+C_{-1} \|\mathbf f\|^2_{\dot{\mathbf H}_{\#}^{-1}}.
\end{align}

Since $({\mathbf u},p)\in \dot{\mathbf H}_{\#\sigma}^{1} \times \dot H_{\#}^{0}$ satisfy \eqref{eq:mik35}, the solution uniqueness for the Oseen equation \eqref{eq:mik35lim} with a fixed $\mathbf u$ in 
${\dot{\mathbf H}_{\#\sigma}^{1}}\times\dot H_{\#}^{0}$ (see Theorem \ref{Oseen-problemTh}(ii)) implies that 
$({\mathbf u},p)=(\widetilde{\mathbf  u},\widetilde{p})\in{\dot{\mathbf H}_{\#\sigma}^{2}}\times\dot H_{\#}^{1}$ and estimate \eqref{eq:mik5.23} holds with $C_1=2C$.
\end{proof}

Note that in the third inequality in \eqref{E5.28} we used the Sobolev embedding theorem that implied that
there exists a constant $C_{\theta\#}>0$ independent of $\mathbf v$, such that
$
\|{\mathbf v}\|_{\mathbf L_{\theta\#}} \le C_{\theta\#}\|{\mathbf v}\|_{{\mathbf H}^1_{\#}}
$
for any  $\mathbf v\in {\mathbf H}^1_{\#}$.
This estimate is however not available for the dimensions $n>4$, which limited our regularity proof in Theorem \ref{th:mik7} to the cases $n\in\{2,3,4\}$ only.

Combining Theorems \ref{th:mik7} and \ref{th:mik5}, we obtain the following assertion on existence and regularity of solution to the Navier-Stokes system on torus for $n\in\{2,3,4\}$. Note that for $n\in\{2,3\}$ the assertion is already obtained in Theorem \ref{th:mik6}.

\begin{theorem}\label{th:mik8}
Let $n\in\{2,3,4\}$ and condition \eqref{eq:mik2} hold.

(i) If $\mathbf f\in \dot{\mathbf H}_{\#}^{s-2} $, $s\ge 2$, then the anisotropic Navier-Stokes equation \eqref{eq:mik35} has a solution 
$({\mathbf u},p)\in \dot{\mathbf H}_{\#\sigma}^{s} \times \dot H_\#^{s-1} $.

(ii) Moreover, if ${\mathbf f}\in\dot{\mathbf C}_\#^\infty$ then equation \eqref{eq:mik35} has a solution 
$({\mathbf u},p)\in\dot{\mathbf C}_\#^\infty\times \dot {\mathcal C}^\infty_\#$.
\end{theorem}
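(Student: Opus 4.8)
The plan is to reduce everything to the two results already established, namely the base existence Theorem~\ref{th:mik7} and the regularity bootstrap Theorem~\ref{th:mik5}, so that the proof amounts to checking that the starting regularity supplied by the former clears the threshold required by the latter. Since Theorem~\ref{th:mik6} already settles $n\in\{2,3\}$, the genuine content is the inclusion of $n=4$, which is exactly what Theorem~\ref{th:mik7} was designed to enable.

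First I would treat item (i). Given $\mathbf f\in \dot{\mathbf H}_{\#}^{s-2}$ with $s\ge 2$, the exponent satisfies $s-2\ge 0$, whence $\dot{\mathbf H}_{\#}^{s-2}\subseteq \dot{\mathbf H}_{\#}^{0}$ and so $\mathbf f\in \dot{\mathbf H}_{\#}^{0}$. Theorem~\ref{th:mik7} then furnishes a solution $(\mathbf u,p)\in \dot{\mathbf H}_{\#\sigma}^{2}\times \dot H_{\#}^{1}$ of \eqref{eq:mik35}. If $s=2$, this is already the claimed inclusion and item (i) holds. For $s>2$ I would invoke the regularity Theorem~\ref{th:mik5}(i) with $s_1=2$ and $s_2=s$. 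The point on which the argument turns is that its hypothesis $s_1>-1+n/2$ is met: for $n\in\{2,3,4\}$ one has $-1+n/2\le 1<2=s_1$. Consequently Theorem~\ref{th:mik5}(i) upgrades the solution to $(\mathbf u,p)\in \dot{\mathbf H}_{\#\sigma}^{s}\times \dot H_{\#}^{s-1}$, completing item (i).

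Item (ii) is then immediate: if $\mathbf f\in \dot{\mathbf C}_\#^\infty$, then $\mathbf f\in \dot{\mathbf H}_{\#}^{s-2}$ for every $s\in\R$, so item (i) places the solution in $\dot{\mathbf H}_{\#\sigma}^{s}\times \dot H_{\#}^{s-1}$ for all $s$; the characterization $\bigcap_{s\in\R}\dot H_\#^s=\dot {\mathcal C}^\infty_\#$ recorded in Section~\ref{sec:mik3} then yields $(\mathbf u,p)\in \dot{\mathbf C}_\#^\infty\times \dot {\mathcal C}^\infty_\#$.

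Since the substantive analysis is already packaged in Theorems~\ref{th:mik7} and~\ref{th:mik5}, I expect no real obstacle at this stage; the proof is a short assembly. The one thing worth emphasizing, rather than a difficulty, is the role of the improved base regularity: a starting exponent of only $s_1=1$ (as delivered by the weak existence Theorem~\ref{th:mik4}) would fail the bootstrap threshold $-1+n/2=1$ precisely when $n=4$, whereas the exponent $s_1=2$ obtained from Theorem~\ref{th:mik7} clears it, which is exactly why extending the result from $n\in\{2,3\}$ to $n=4$ is feasible here. The only bookkeeping to carry out is the verification $2>-1+n/2$ uniformly over $n\in\{2,3,4\}$.
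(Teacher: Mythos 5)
Your proposal is correct and follows exactly the paper's intended argument: the paper states Theorem~\ref{th:mik8} as the direct combination of the base existence result of Theorem~\ref{th:mik7} (giving $(\mathbf u,p)\in \dot{\mathbf H}_{\#\sigma}^{2}\times \dot H_{\#}^{1}$) with the bootstrap of Theorem~\ref{th:mik5}, whose threshold $s_1>-1+n/2$ is cleared by $s_1=2$ precisely for $n\in\{2,3,4\}$. Your handling of the edge case $s=2$ and of item (ii) via $\bigcap_{s\in\R}\dot H_\#^s=\dot{\mathcal C}^\infty_\#$ is likewise what the paper does.
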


\section{Some auxiliary results}\label{sec:mik6}
\subsection{\bf Abstract mixed variational formulations}
\label{B-B-theory}
Let us produce the well-posedness result for the abstract mixed formulation, related to  Babu\v{s}ka \cite{Babuska1973} and Brezzi \cite[Theorem 1.1]{Brezzi1974}
(see also Theorem 2.34 and Remark 2.35(i) in Ern \& Guermond \cite{Ern-Guermond2004}, and Brezzi \& Fortin \cite{Brezzi-Fortin1991}).
\begin{theorem}\label{B-B}
Let $X$ and ${\mathcal M}$ be two real Hilbert spaces. Let $a(\cdot ,\cdot):X\times X\to {\mathbb R}$ and $b(\cdot ,\cdot):X\times {\mathcal M}\to {\mathbb R}$ be bounded bilinear forms. Let $f\in X'$ and $g\in {\mathcal M}'$. Let $V$ be the subspace of $X$ defined by
\begin{align}
\label{V}
V:=\left\{v\in X: b(v,q)=0\quad \forall \, q\in {\mathcal M}\right\}.
\end{align}
Assume that $a(\cdot ,\cdot ):V\times V\to {\mathbb R}$ is coercive, which means that there exists a constant $C_a>0$ such that
\begin{align}
\label{coercive}
a(w,w)\geq C_a^{-1}\|w\|_X^2\quad \forall \, w\in V,
\end{align}
and that $b(\cdot ,\cdot):X\times {\mathcal M}\to {\mathbb R}$ satisfies
the Babu\v{s}ka-Brezzi condition
\begin{align}
\label{inf-sup-sm}
&\inf _{q\in {\mathcal M}\setminus \{0\}}\sup_{v\in X\setminus \{0\}}\frac{b(v,q)}{\|v\|_X\|q\|_{\mathcal M}}\geq C_b^{-1} \,,
\end{align}
with some constant $C_b>0$. Then the mixed variational {formulation}
\begin{equation}
\label{mixed-variational}
\left\{\begin{array}{ll}
a(u,v)+b(v,p)\hspace{-0.5em}&=f(v) \quad \forall \, v\in X,\\
b(u,q)&=g(q) \quad \forall \, q\in {\mathcal M},
\end{array}
\right.
\end{equation}
has a unique solution $(u,p)\in X\times {\mathcal M}$ and
\begin{align}
\label{mixed-Cu}
&\|u\|_{X}\leq C_a\|f\|_{X'}+C_b(1+\|a\|C_a)\|g\|_{\mathcal M'},\\
\label{mixed-Cp}
&\|p\|_{{\mathcal M}}\leq C_b(1+\|a\|C_a)\|f\|_{X'}+{\|a\|}C_b^{2}(1+\|a\|C_a)\|g\|_{\mathcal M'},
\end{align}
{where $\|a\|$ is the norm of the bilinear form $a(\cdot ,\cdot )$.}
\end{theorem}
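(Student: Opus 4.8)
The plan is to prove Theorem \ref{B-B} by the classical Babu\v{s}ka--Brezzi decomposition of the saddle-point problem \eqref{mixed-variational} into a constraint equation, a coercive problem on the kernel $V$, and a pressure-recovery step. First I would encode the form $b$ by the bounded operator $B\colon X\to{\mathcal M}'$ with $\langle Bv,q\rangle=b(v,q)$, and reinterpret the inf-sup condition \eqref{inf-sup-sm}. Writing $B^*\colon{\mathcal M}\to X'$ for the adjoint (so $B^*q=b(\cdot,q)$), one has $\sup_{v\neq 0}b(v,q)/\|v\|_X=\|B^*q\|_{X'}$, hence \eqref{inf-sup-sm} is exactly the lower bound $\|B^*q\|_{X'}\ge C_b^{-1}\|q\|_{\mathcal M}$. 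By the closed range theorem this makes $B^*$ injective with closed range and $B$ surjective, with $\mathrm{Range}(B^*)$ equal to the annihilator $V^\circ$ of $V$ in $X'$, and the restriction $B|_{V^\perp}\colon V^\perp\to{\mathcal M}'$ an isomorphism with $\|(B|_{V^\perp})^{-1}\|\le C_b$. Establishing this equivalence and the a priori bound $C_b$ on the relevant preimages is the functional-analytic heart of the argument and the step I expect to require the most care.

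Next, given $g\in{\mathcal M}'$, surjectivity of $B$ produces a unique $u_0\in V^\perp$ with $b(u_0,q)=g(q)$ for all $q\in{\mathcal M}$ and $\|u_0\|_X\le C_b\|g\|_{{\mathcal M}'}$. Seeking the solution as $u=u_0+w$ with $w\in V$ reduces the first line of \eqref{mixed-variational}, tested against $v\in V$ (where $b(v,p)=0$), to the coercive equation $a(w,v)=f(v)-a(u_0,v)$ for all $v\in V$. Since $a$ is bounded and coercive on $V$ by \eqref{coercive}, the Lax--Milgram theorem gives a unique $w\in V$ with $\|w\|_X\le C_a(\|f\|_{X'}+\|a\|\,\|u_0\|_X)$. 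Setting $u=u_0+w$ then satisfies $b(u,q)=g(q)$ for all $q$ and, by construction, $f(v)-a(u,v)=0$ for all $v\in V$; the triangle inequality combined with the two bounds yields $\|u\|_X\le C_a\|f\|_{X'}+C_b(1+\|a\|C_a)\|g\|_{{\mathcal M}'}$, which is precisely \eqref{mixed-Cu}.

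To recover the pressure I would consider the functional $v\mapsto f(v)-a(u,v)$ in $X'$. It annihilates $V$ by the previous step, hence lies in $V^\circ=\mathrm{Range}(B^*)$; the inf-sup property therefore gives a unique $p\in{\mathcal M}$ with $b(v,p)=f(v)-a(u,v)$ for all $v\in X$, and the lower bound on $B^*$ yields $\|p\|_{\mathcal M}\le C_b\,\sup_{v\neq 0}|f(v)-a(u,v)|/\|v\|_X\le C_b(\|f\|_{X'}+\|a\|\,\|u\|_X)$. Substituting the bound on $\|u\|_X$ and expanding produces exactly $\|p\|_{\mathcal M}\le C_b(1+\|a\|C_a)\|f\|_{X'}+\|a\|C_b^2(1+\|a\|C_a)\|g\|_{{\mathcal M}'}$, i.e. \eqref{mixed-Cp}. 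The pair $(u,p)$ then solves \eqref{mixed-variational}.

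Finally, for uniqueness I would set $f=0$, $g=0$: the second equation forces $u\in V$, so testing the first with $v=u$ gives $a(u,u)+b(u,p)=0$ with $b(u,p)=0$, whence coercivity forces $u=0$; then $b(v,p)=0$ for all $v\in X$, and injectivity of $B^*$ (equivalently \eqref{inf-sup-sm}) forces $p=0$. The only genuine bookkeeping is to propagate the three constants $C_a$, $C_b$, and $\|a\|$ so that the final estimates reproduce \eqref{mixed-Cu}--\eqref{mixed-Cp} verbatim, which the computation above confirms.
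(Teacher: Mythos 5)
Your proof is correct. Note that the paper itself does not prove Theorem \ref{B-B}: it states it as a known result and cites Babu\v{s}ka, Brezzi, Ern \& Guermond (Theorem 2.34 and Remark 2.35(i)), and Brezzi \& Fortin. Your argument --- encoding $b$ by the operator $B$ with adjoint $B^*$, using the closed range theorem to get $\mathrm{Range}(B^*)=V^\circ$ and the isomorphism $B|_{V^\perp}\colon V^\perp\to{\mathcal M}'$ with inverse bound $C_b$, splitting $u=u_0+w$ with $u_0\in V^\perp$ lifting $g$ and $w\in V$ obtained from Lax--Milgram, then recovering $p$ from the functional $v\mapsto f(v)-a(u,v)\in V^\circ$ --- is exactly the classical proof found in those references, and your bookkeeping reproduces the constants in \eqref{mixed-Cu} and \eqref{mixed-Cp} verbatim, including the quantitative step $\|u_0\|_X\le C_b\|Bu_0\|_{{\mathcal M}'}$ for $u_0\in V^\perp$, which follows from writing $u_0=B^*z$ and estimating $\|u_0\|_X^2=\langle Bu_0,z\rangle\le\|Bu_0\|_{{\mathcal M}'}\|z\|_{\mathcal M}\le C_b\|Bu_0\|_{{\mathcal M}'}\|u_0\|_X$.
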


\subsection{Brower fixed point theorem application} 
In the main text we need the following well-known result that follows from the Brower fixed point theorem (see, e.g., \cite[{\mg Chapter 1,} Lemma 4.3]{Lions1969}, \cite[Lemma IX.3.1]{Galdi2011}).
\begin{lemma}\label{BL-lemma4.3}
Let $\eta\to Q(\eta)$ be a continuous map of $\R^m$ to itself, such that for some $\rho>0$, 
\begin{align}
(Q(\eta),\eta)\ge 0\quad \forall\eta:\ |\eta|=\rho.
\end{align}
Here for $\eta=\{\eta_j\}, \zeta=\{\zeta_j\}\in\R^m$ we denote
$$
(\eta,\zeta):=\sum_{j=1}^m\eta_j\zeta_j,\quad |\eta|:=(\eta,\eta)^{1/2}.
$$
Then there exists $\eta$ such that $|\eta|\le\rho$ and $Q(\eta)=0$.
\end{lemma}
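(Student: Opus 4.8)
The plan is to argue by contradiction and reduce the statement to a direct application of the Brouwer fixed point theorem on the closed ball $\overline B_\rho:=\{\eta\in\R^m:|\eta|\le\rho\}$. Suppose, contrary to the assertion, that $Q(\eta)\ne 0$ for every $\eta\in\overline B_\rho$. Since $Q$ is continuous and never vanishes on the compact set $\overline B_\rho$, the quantity $|Q(\eta)|$ is continuous and bounded away from zero there, so the map
\begin{align*}
P(\eta):=-\rho\,\frac{Q(\eta)}{|Q(\eta)|}
\end{align*}
is well defined and continuous on $\overline B_\rho$. Moreover $|P(\eta)|=\rho$ for all $\eta$, so $P$ maps $\overline B_\rho$ continuously into itself (indeed onto the sphere $|\eta|=\rho$).

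Next I would invoke the Brouwer fixed point theorem for the continuous self-map $P$ of the nonempty convex compact set $\overline B_\rho$: there exists $\eta^\ast\in\overline B_\rho$ with $P(\eta^\ast)=\eta^\ast$. Because $|P(\eta)|=\rho$ identically, the fixed point satisfies $|\eta^\ast|=\rho$, and hence lies on the sphere where the hypothesis of the lemma is available.

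The sign contradiction is then immediate. Taking the inner product of the fixed-point identity with $\eta^\ast$ gives
\begin{align*}
\rho^2=|\eta^\ast|^2=(\eta^\ast,\eta^\ast)=(\eta^\ast,P(\eta^\ast))
=-\rho\,\frac{(\eta^\ast,Q(\eta^\ast))}{|Q(\eta^\ast)|}.
\end{align*}
The left-hand side is strictly positive (as $\rho>0$), while the hypothesis $(Q(\eta^\ast),\eta^\ast)\ge 0$ (applicable since $|\eta^\ast|=\rho$) together with $|Q(\eta^\ast)|>0$ forces the right-hand side to be nonpositive, a contradiction. Therefore the assumption fails, i.e. $Q$ must vanish at some $\eta$ with $|\eta|\le\rho$, which is the claim.

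I expect the only point requiring care — and the place where the no-zero hypothesis is genuinely consumed — to be the well-definedness and continuity of the normalised map $P$ on the \emph{entire} closed ball: this needs $Q$ nonvanishing not merely on the bounding sphere but throughout $\overline B_\rho$, which is exactly what the contradiction assumption supplies. The remaining ingredients (compactness and convexity of $\overline B_\rho$, and the fact that every fixed point of $P$ necessarily satisfies $|\eta^\ast|=\rho$) are routine, and the use of Brouwer's theorem is the single nontrivial external input.
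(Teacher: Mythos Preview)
Your proof is correct and is precisely the standard argument found in the references the paper cites (Lions, Chapter~1, Lemma~4.3; Galdi, Lemma~IX.3.1). The paper itself does not supply a proof of this lemma but merely states it as a well-known consequence of the Brouwer fixed point theorem, so there is nothing to compare against beyond noting that your argument matches the classical one.
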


\subsection{Advection term properties}\label{S7.2}

Let the quadratic operator $\bs{\mathit B}: {\mathbf w}\mapsto \bs{\mathit B}\mathbf w$ be defined as 
$
\bs{\mathit B}{\mathbf w}:=({\mathbf w}\cdot \nabla ){\mathbf w}.
$

Let in this section the dimension $n\ge 2$. 
To formulate assertions valid both for $n=2$ and $n>2$, let us define the set $I_n$ as 
\begin{align}\label{In}
I_2:=(2,\infty),\quad I_n:=[n,\infty) \mbox{ if }n>2.
\end{align}
{\mg Let $\theta\in I_n$ and let us denote $q_\theta:=2\theta/(\theta-2)$. Then}
\begin{align}\label{2theta}
2<2\theta/(\theta-2)<\infty \mbox{ if } n=2;\quad 2<2\theta/(\theta-2)\le 2n/(n-2) \mbox{ if } n>2.
\end{align}
By the Sobolev embedding theorem (see, e.g., \cite[Section 2.2.4, Corollary 2]{Runst-Sickel1996}), for any $\theta\in I_n$
the space ${\mathbf H}_{\#}^{1}$ is continuously embedded in the space $\mathbf L_{2\theta/(\theta-2)\#}$
and there exists a constant $C_{2\theta/(\theta-2)\#}>0$ independent of $\mathbf v$, such that
\begin{align}\label{E7.9}
\|{\mathbf v}\|_{\mathbf L_{2\theta/(\theta-2)\#}} \le C_{2\theta/(\theta-2)\#}\|{\mathbf v}\|_{{\mathbf H}^1_{\#}}
\quad
\forall\  \mathbf v\in {\mathbf H}^1_{\#},\ \forall\,\theta\in I_n .
\end{align}

\subsubsection{} \label{S7.2.2}
By the H\"older inequality, for any $\theta\in I_n$
\begin{multline}\label{eq:mik54-1}
|\langle({\mathbf v}_1\cdot \nabla ){\mathbf v}_2,{\mathbf v}_3\rangle _{\T}|
\!\!\le\!\|{\mathbf v}_1\|_{\mathbf L_{2\theta/(\theta-2)\#}} \|\nabla {\mathbf v}_2\|_{(L_{2\#})^{n\times n}} 
\|{\mathbf v}_3\|_{\mathbf L_{\theta\#}}\\
 \forall\ {\mathbf v}_1\in \mathbf L_{2\theta/(\theta-2)\#},\
{\mathbf v}_2\in {\mathbf H}_{\#}^{1},\ 
{\mathbf v}_3\in \mathbf L_{\theta\#},\ \forall\,\theta\in I_n.
\end{multline}
Due to  \eqref{E7.9} {\cn and \eqref{eq:mik14}}, inequality \eqref{eq:mik54-1} gives
\begin{multline}\label{eq:mik54-2}
|\langle({\mathbf v}_1\cdot \nabla ){\mathbf v}_2,{\mathbf v}_3\rangle _{\T}|
\!\!\le\!{\cn 2\pi}C_{2\theta/(\theta-2)\#}\|{\mathbf v}_1\|_{{\mathbf H}_{\#}^{1}} \|{\mathbf v}_2\|_{{\mathbf H}_{\#}^{1}} 
\|{\mathbf v}_3\|_{\mathbf L_{\theta\#}}\\
 \forall\ {\mathbf v}_1,{\mathbf v}_2\in {\mathbf H}_{\#}^{1},\ 
{\mathbf v}_3\in \mathbf L_{\theta\#},\ \forall\,\theta\in I_n.
\end{multline}
This implies that the trilinear form $\langle({\mathbf v}_1\cdot \nabla ){\mathbf v}_2,{\mathbf v}_3\rangle _{\T}$ is bounded and continuous on ${\mathbf H}_{\#}^{1}\times{\mathbf H}_{\#}^{1}\times  \mathbf L_{\theta\#},\ \forall\,\theta\in I_n$. 
Taking into account that the space $\mathbf L_{\theta/(\theta-1)\#}$ is dual to the space $\mathbf L_{\theta\#}$,
this implies that if ${\mathbf v}_1,{\mathbf v}_2\in {\mathbf H}_{\#}^{1}$, then
$$
({\mathbf v}_1\cdot \nabla ){\mathbf v}_2\in{\mathbf L_{\theta/(\theta-1)\#}}
,\ \forall\,\theta\in I_n
$$
and the following estimate holds for the bilinear {\cn term}
\begin{align*}
\|({\mathbf v}_1\cdot \nabla ){\mathbf v}_2\|_{\mathbf L_{\theta/(\theta-1)\#}}
\le {\cn 2\pi} C_{2\theta/(\theta-2)\#} \|{\mathbf v}_1\|_{\mathbf H^1_{\#}} \|{\mathbf v}_2\|_{\mathbf H^1_{\#}},\ \forall\,\theta\in I_n.
\end{align*}
This means that the quadratic operator
\begin{align}
\label{eq:mik37a}
&{\bs{\mathit B}}:{\mathbf H}^{1}_{\#}\to \mathbf L_{\theta/(\theta-1)\#},\ \forall\,\theta\in I_n
\end{align} 
is bounded and continuous. 

Moreover, if ${\mathbf v}_1\in{\mathbf H}_{\#\sigma}^{1}$ and ${\mathbf v}_2\in{\mathbf H}_{\#}^{1}$, then
\begin{align}\label{E7.19}
\int_{\T}({\mathbf v}_1\cdot \nabla ){\mathbf v}_2dx
=\int_{\T}\partial_i({v}_{1,i}{\mathbf v}_{2})dx - \int_{\T}(\div\, {\mathbf v}_1 ){\mathbf v}_{2}dx=0,
\end{align}
and hence
$
({\mathbf v}_1\cdot \nabla ){\mathbf v}_2\in\dot{\mathbf L}_{\theta/(\theta-1)\#} ,\ \forall\,\theta\in I_n, 
$
implying the boundedness and continuity of the quadratic operator
\begin{align}
\label{eq:mik37c}
&{\bs{\mathit B}}:\dot{\mathbf H}_{\#\sigma}^{1}\to \dot{\mathbf L}_{\theta/(\theta-1)\#},\ \forall\,\theta\in I_n.
\end{align}
For $n\ge 3$, taking $\theta=n$,
this particularly implies that the quadratic operator
\begin{align}
\label{eq:mik37d}
&{\bs{\mathit B}}:\dot{\mathbf H}_{\#\sigma}^{1}\to\widetilde {\mathbf V}_{\#\sigma}^*
=(\dot{\mathbf H}_{\#\sigma}^{1})^*\cup \mathbf L_{n/(n-1)\#}.
\end{align}
is also bounded and continuous.
In fact, operator \eqref{eq:mik37d} is bounded and continuous also for $n=2$.

Indeed, for $n=2$, estimate \eqref{eq:mik54-2} with $\theta=4$ leads to the estimate
\begin{multline}\label{eq:mik54-2n2}
|\langle({\mathbf v}_1\cdot \nabla ){\mathbf v}_2,{\mathbf v}_3\rangle _{\T}|
\!\!\le\!{\cn 2\pi}C_{4\#}\|{\mathbf v}_1\|_{{\mathbf H}_{\#}^{1}} \|{\mathbf v}_2\|_{{\mathbf H}_{\#}^{1}} 
\|{\mathbf v}_3\|_{\mathbf L_{4\#}}\\
 \forall\ {\mathbf v}_1,{\mathbf v}_2\in {\mathbf H}_{\#}^{1},\ 
{\mathbf v}_3\in \mathbf L_{4\#}.
\end{multline}
Assume now that ${\mathbf v}_3\in {\mathbf H}_{\#}^{1}$ and again take into account that by the Sobolev embedding theorem 
the space ${\mathbf H}_{\#}^{1}$ is continuously embedded in the space $\mathbf L_{4\#}$
and 
\begin{align}\label{E7.9n2}
\|{\mathbf v}_3\|_{\mathbf L_{4\#}} \le C_{4\#}\|{\mathbf v}_3\|_{{\mathbf H}^1_{\#}}
\quad
\forall\  \mathbf v_3\in {\mathbf H}^1_{\#},\ n=2.
\end{align}
Due to  \eqref{E7.9n2}, inequality \eqref{eq:mik54-2n2} gives
\begin{multline*}
|\langle({\mathbf v}_1\cdot \nabla ){\mathbf v}_2,{\mathbf v}_3\rangle _{\T}|
\!\!\le\!{\cn 2\pi}C^2_{4\#}\|{\mathbf v}_1\|_{{\mathbf H}_{\#}^{1}} \|{\mathbf v}_2\|_{{\mathbf H}_{\#}^{1}} 
\|{\mathbf v}_3\|_{{\mathbf H}_{\#}^{1}}\\
 \forall\ {\mathbf v}_1,{\mathbf v}_2,{\mathbf v}_3\in {\mathbf H}_{\#}^{1},\ n=2.
\end{multline*}
This implies that the trilinear form $\langle({\mathbf v}_1\cdot \nabla ){\mathbf v}_2,{\mathbf v}_3\rangle _{\T}$ is bounded and continuous on ${\mathbf H}_{\#}^{1}\times{\mathbf H}_{\#}^{1}\times  {\mathbf H}_{\#}^{1}$. 
Taking into account that the space ${\mathbf H}_{\#}^{-1}$ is dual to the space ${\mathbf H}_{\#}^{1}$,
this implies that if ${\mathbf v}_1,{\mathbf v}_2\in {\mathbf H}_{\#}^{1}$, then
$$
({\mathbf v}_1\cdot \nabla ){\mathbf v}_2\in{\mathbf H}_{\#}^{-1},\ n=2,
$$
and the following estimate holds for the bilinear operator
\begin{align*}
\|({\mathbf v}_1\cdot \nabla ){\mathbf v}_2\|_{{\mathbf H}_{\#}^{-1}}
\le {\cn 2\pi}C^2_{4\#} \|{\mathbf v}_1\|_{\mathbf H^1_{\#}} \|{\mathbf v}_2\|_{\mathbf H^1_{\#}},\ n=2.
\end{align*}
This means that the quadratic operator
\begin{align*}
&{\bs{\mathit B}}:{\mathbf H}^{1}_{\#}\to{\mathbf H}_{\#}^{-1},\ n=2
\end{align*} 
is bounded and continuous. 
Moreover, if ${\mathbf v}_1\in{\mathbf H}_{\#\sigma}^{1}$ and ${\mathbf v}_2\in{\mathbf H}_{\#}^{1}$, then by \eqref{E7.19}
we also obtain
$
({\mathbf v}_1\cdot \nabla ){\mathbf v}_2\in\dot{\mathbf H}_{\#}^{-1} 
$
and the continuity of the quadratic operator
\begin{align}
\label{eq:mik37cn2}
&{\bs{\mathit B}}:\dot{\mathbf H}_{\#\sigma}^{1}\to \dot{\mathbf H}_{\#}^{-1},\ n=2.
\end{align}
Since ${\mathbf H}_{\#}^{-1}=({\mathbf H}_{\#}^{1})^*\subset({\mathbf H}_{\#\sigma}^{1})^*
\subset(\dot{\mathbf H}_{\#\sigma}^{1})^*\cup \mathbf L_{n/(n-1)\#}
=\widetilde {\mathbf V}_{\#\sigma}^*
$, the continuity of operator \eqref{eq:mik37cn2}
implies also the boundedness and continuity of operator
\begin{align*}
&{\bs{\mathit B}}:\dot{\mathbf H}_{\#\sigma}^{1}\to\widetilde {\mathbf V}_{\#\sigma}^*, \ n=2,
\end{align*}
that is, of operator \eqref{eq:mik37d} also for $n=2$.

\subsubsection{} 
Let us also give another well known result, which proof can be easily accommodated to the periodic case from the one  found, e.g., in \cite[{\mg Chapter} 2, Lemma 1.5]{Temam2001}, see also \cite[{\mg Chapter 1,} Teorem 7.1]{Lions1969}.
\begin{lemma}\label{L6.2}
Let $\mathbf u_k$ converges to $\mathbf u$ weakly in $\dot{\mathbf H}_{\#\sigma}^1$  and strongly in $\dot{\mathbf L}_{2\#\sigma}$.
Then 
$
\langle\bs{\mathit B}{\mathbf u}_k, \mathbf v\rangle_{\T}\to \langle({\mathbf u}\cdot \nabla ){\mathbf u}, \mathbf v\rangle_{\T}\quad
\forall\, \mathbf v\in  \dot{\mathbf C}^\infty_{\#\sigma}.
$
\end{lemma}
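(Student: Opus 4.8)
The plan is to exploit the antisymmetry of the trilinear advection form, which transfers the gradient from the (only weakly convergent) factor $\mathbf u_k$ onto the smooth test field $\mathbf v$; once the derivative no longer hits $\mathbf u_k$, only \emph{strong} $\dot{\mathbf L}_{2\#\sigma}$ convergence is needed. First I would fix $\mathbf v\in\dot{\mathbf C}^\infty_{\#\sigma}$ and recall that each $\mathbf u_k$ and the weak limit $\mathbf u$ lie in $\dot{\mathbf H}_{\#\sigma}^1$, hence are divergence-free. For divergence-free fields the integration-by-parts identity (the same relation \eqref{eq:mik55} already used above) gives
\begin{align*}
\langle\bs{\mathit B}\mathbf u_k,\mathbf v\rangle_\T
=\langle(\mathbf u_k\cdot\nabla)\mathbf u_k,\mathbf v\rangle_\T
=-\langle(\mathbf u_k\cdot\nabla)\mathbf v,\mathbf u_k\rangle_\T
=-\int_\T u_{k,i}\,\partial_i v_j\,u_{k,j}\,d\mathbf x .
\end{align*}
The decisive gain is that $\partial_i v_j\in\mathcal C^\infty_\#$ is bounded, so the integrand is a bounded smooth coefficient times a quadratic expression in $\mathbf u_k$.

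Next I would pass to the limit on the right-hand side. Using $\|\nabla\mathbf v\|_{\mathbf L_{\infty\#}}<\infty$ I estimate
\begin{align*}
\Big|\int_\T \partial_i v_j\,(u_{k,i}u_{k,j}-u_iu_j)\,d\mathbf x\Big|
\le \|\nabla\mathbf v\|_{\mathbf L_{\infty\#}}\,\|u_{k,i}u_{k,j}-u_iu_j\|_{L_{1\#}},
\end{align*}
and control the $L_{1\#}$ factor by the standard bilinear splitting
\begin{align*}
\|u_{k,i}u_{k,j}-u_iu_j\|_{L_{1\#}}
\le \|u_{k,i}\|_{L_{2\#}}\,\|u_{k,j}-u_j\|_{L_{2\#}}
+\|u_{k,i}-u_i\|_{L_{2\#}}\,\|u_j\|_{L_{2\#}} .
\end{align*}
The norms $\|\mathbf u_k\|_{\mathbf L_{2\#}}$ are bounded (the sequence converges strongly in $\dot{\mathbf L}_{2\#\sigma}$) and $\|\mathbf u_k-\mathbf u\|_{\mathbf L_{2\#}}\to0$, so this tends to $0$. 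Therefore $-\langle(\mathbf u_k\cdot\nabla)\mathbf v,\mathbf u_k\rangle_\T\to-\langle(\mathbf u\cdot\nabla)\mathbf v,\mathbf u\rangle_\T$. Applying the antisymmetry identity once more to the divergence-free limit $\mathbf u$, for which $(\mathbf u\cdot\nabla)\mathbf u$ is well defined by the mapping property \eqref{eq:mik37d}, I get $-\langle(\mathbf u\cdot\nabla)\mathbf v,\mathbf u\rangle_\T=\langle(\mathbf u\cdot\nabla)\mathbf u,\mathbf v\rangle_\T$, which is the asserted limit.

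The only genuine obstacle is the quadratic nonlinearity: weak $\dot{\mathbf H}^1_\#$ convergence by itself cannot be passed through the product $u_{k,i}u_{k,j}$. The antisymmetry step is exactly what removes this difficulty, since after moving the derivative onto $\mathbf v$ the remaining expression is a product of two factors for which strong $\mathbf L_{2\#}$ convergence suffices, and that strong convergence is furnished by the compact embedding $\dot{\mathbf H}^1_\#\hookrightarrow\dot{\mathbf L}_{2\#}$. I would note that the argument uses only $\mathbf v\in\dot{\mathbf C}^\infty_{\#\sigma}$ (divergence-free and smooth enough to carry the boundedness of $\nabla\mathbf v$), consistent with the statement.
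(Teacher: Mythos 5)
Your proposal is correct and follows essentially the same route as the paper: the paper does not write out a proof of Lemma \ref{L6.2} but refers to \cite[Chapter 2, Lemma 1.5]{Temam2001} and \cite[Chapter 1, Theorem 7.1]{Lions1969}, whose argument is precisely yours --- use the antisymmetry of the trilinear advection form to move the derivative onto the smooth divergence-free test field, then pass to the limit in the quadratic term via strong $\dot{\mathbf L}_{2\#\sigma}$ convergence and the bilinear splitting, and apply antisymmetry once more to the limit. The only cosmetic refinement would be to justify the antisymmetry identity for $\mathbf u_k\in\dot{\mathbf H}_{\#\sigma}^1$ by density of $\dot{\mathbf C}^\infty_{\#\sigma}$ together with the boundedness estimates \eqref{eq:mik54-2}, \eqref{eq:mik54-2a}, rather than citing \eqref{eq:mik55} (which is its special case with coinciding second and third arguments), but all the needed ingredients are in Section \ref{S7.2} of the paper.
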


\subsubsection{}
Similar to \eqref{eq:mik54-1}, we  have,
\begin{multline}\label{eq:mik54-1b}
|\langle({\mathbf v}_1\cdot \nabla ){\mathbf v}_2,{\mathbf v}_3\rangle _{\T}|
\!\!\le\!\|{\mathbf v}_1\|_{\mathbf L_{\theta\#}} \|\nabla {\mathbf v}_2\|_{(L_{2\#})^{n\times n}} 
\|{\mathbf v}_3\|_{\mathbf L_{2\theta/(\theta-2)\#}}\\
 \forall\ {\mathbf v}_1\in \mathbf L_{\theta\#},\
{\mathbf v}_2\in {\mathbf H}_{\#}^{1},\ 
{\mathbf v}_3\in \mathbf L_{2\theta/(\theta-2)\#}.
\end{multline}
Due {\mg to} \eqref{E7.9}, inequality \eqref{eq:mik54-1b} implies
\begin{multline}\label{eq:mik54-2a}
|\langle({\mathbf v}_1\cdot \nabla ){\mathbf v}_2,{\mathbf v}_3\rangle _{\T}|
\!\!\le{\cn 2\pi}C_{2\theta/(\theta-2)\#}\|{\mathbf v}_1\|_{\mathbf L_{\theta\#}} \|{\mathbf v}_2\|_{{\mathbf H}_{\#}^{1}} 
\|{\mathbf v}_3\|_{{\mathbf H}_{\#}^{1}}\\
 \forall\ \mathbf v_1\in \mathbf L_{\theta\#},\
 \forall\  \mathbf v_2, \mathbf v_3\in {\mathbf H}_{\#}^{1}.
\end{multline}

This implies that if ${\mathbf v}_1\in \mathbf L_{\theta\#}$, ${\mathbf v}_2\in {\mathbf H}_{\#}^{1}$, then
$
({\mathbf v}_1\cdot \nabla ){\mathbf v}_2\in {\mathbf H}_{\#}^{-1}
$
and the following estimate holds for the bilinear operator
\begin{align}
\label{eq:mik37a2a}
&\|({\mathbf v}_1\cdot \nabla ){\mathbf v}_2\|_{\mathbf H^{-1}_{\#}}
\le {\cn 2\pi}C_{2\theta/(\theta-2)\#} \|{\mathbf v}_1\|_{\mathbf L_{\theta\#}} \|{\mathbf v}_2\|_{\mathbf H^1_{\#}}.
\end{align}
Moreover,  if   ${\mathbf v}_1\in \mathbf L_{\theta\#\sigma}$, ${\mathbf v}_2\in {\mathbf H}_{\#}^{1}$, 
then again \eqref{E7.19} holds, implying that  by \eqref{eq:mik37a2a},
\begin{align}
\label{eq:mik37c2a}
&({\mathbf v}_1\cdot \nabla ){\mathbf v}_2\in \dot{\mathbf H}_{\#}^{-1},\nonumber\\
&\|({\mathbf v}_1\cdot \nabla ){\mathbf v}_2\|_{\dot{\mathbf H}^{-1}_{\#}}
\le {\cn 2\pi}C_{2\theta/(\theta-2)\#} \|{\mathbf v}_1\|_{\mathbf L_{\gr\theta\#}} \|{\mathbf v}_2\|_{{\mathbf H}^1_{\#}}.
\end{align}

\subsubsection{}\label{S7.3.3}  
The  divergence theorem and periodicity
imply the following identity for any ${\mathbf v}_1,{\mathbf v}_2,{\mathbf v}_3\in \mathbf C^\infty_\#$.
\begin{align}
\label{eq:mik54}
\left\langle({\mathbf v}_1\cdot \nabla ){\mathbf v}_2,{\mathbf v}_3\right\rangle _{\T}
&=\int_{\T}\nabla\cdot\left({\mathbf v}_1({\mathbf v}_2\cdot {\mathbf v}_3)\right)d{\mathbf x}
-\left\langle(\nabla \cdot{\mathbf v}_1){\mathbf v}_3
+({\mathbf v}_1\cdot \nabla ){\mathbf v}_3,{\mathbf v}_2\right\rangle _{\T}\nonumber
\\
&=
-\left\langle({\mathbf v}_1\cdot \nabla ){\mathbf v}_3,{\mathbf v}_2\right\rangle _{\T}
-\left\langle(\nabla \cdot{\mathbf v}_1){\mathbf v}_3,{\mathbf v}_2\right\rangle _{\T}
 \end{align}
In view of \eqref{eq:mik54} we obtain the identity
\begin{align*}
\left\langle({\mathbf v}_1\cdot \nabla ){\mathbf v}_2,{\mathbf v}_3\right\rangle _{\T}
&\!\!=\!-\left\langle({\mathbf v}_1\cdot \nabla ){\mathbf v}_3,{\mathbf v}_2\right\rangle _{\T}\quad
 \forall\ {\mathbf v}_1\in \mathbf C^\infty_{\#\sigma},\
{\mathbf v}_2,\, {\mathbf v}_3\in \mathbf C^\infty_\#\,,
\end{align*}
and hence the following well known formula for any  ${\mathbf v}_1\in \mathbf C^\infty_{\#\sigma}$,
${\mathbf v}_2\in \mathbf C^\infty_\#$,
\begin{equation}
\label{eq:mik55}
\left\langle ({\mathbf v}_1\cdot \nabla ){\mathbf v}_2,{\mathbf v}_2\right\rangle _{\T}=0.
\end{equation}

(i) The dense embedding of the space $\mathbf C^\infty_\#$ into ${\mathbf H}_{\#}^{1}$ and into $\mathbf L_{\theta\#}$, the dense embedding of the space $\mathbf C^\infty_{\#\sigma}$ into ${\mathbf H}_{\#\sigma}^{1}$, and estimate \eqref{eq:mik54-2} ensuring the boundedness of the corresponding dual product in \eqref{eq:mik55}, imply that relation \eqref{eq:mik55} holds also for any  
${\mathbf v}_1\in {\mathbf H}_{\#\sigma}^{1}$ and 
${\mathbf v}_2\in {\mathbf H}_{\#}^{1}\cap \mathbf L_{\theta\#}$.

(ii) Particularly, for $n=2$, ${\mathbf H}_{\#}^{1}\cap \mathbf L_{\theta\#}={\mathbf H}_{\#}^{1}\ \forall\ \theta\in (2,\infty)$; for $n\in\{3,4\}$, we can choose $\theta=n$ and take into account that ${\mathbf H}_{\#}^{1}\cap \mathbf L_{n\#}={\mathbf H}_{\#}^{1}$. Hence  if $n\in\{2,3,4\}$,  \eqref{eq:mik55} holds for any  
${\mathbf v}_1\in {\mathbf H}_{\#\sigma}^{1}$ and 
${\mathbf v}_2\in {\mathbf H}_{\#}^{1}$.

(iii) Taking into account also the dense embedding  of $\mathbf C^\infty_{\#\sigma}$ into ${\mathbf L}_{\theta\#\sigma}$, along with estimate \eqref{eq:mik54-2a} ensuring the boundedness of the corresponding dual product in the left hand side of  \eqref{eq:mik55}, we conclude that when  $n\ge 2$, 
\eqref{eq:mik55} holds for any  
${\mathbf v}_1\in {\mathbf L}_{\theta\#\sigma}$ and 
${\mathbf v}_2\in {\mathbf H}_{\#}^{1}$.

\subsubsection{}
Due to Theorem 1 in Section 4.6.1 of \cite{Runst-Sickel1996} 
and equivalence of the Bessel potential norms on square and norms \eqref{eq:mik10} for the Sobolev spaces on torus, we have the following assertion. 
\begin{theorem}\label{th:mik3s1s2}
Let $n\ge 1$, $\tilde s_1\le \tilde s_2$ and $\tilde s_1+\tilde s_2>0$.

(i) If  $\tilde s_2<n/2$
then there exists a constant $C_1=C_1(\tilde s_1, \tilde s_2,n)$ such that for any ${ v}_1\in { H}_{\#}^{\tilde s_1}$ and 
${ v}_2\in { H}_{\#}^{\tilde s_2}$, we have ${ v}_1{ v}_2\in{{ H}_{\#}^{\tilde s_1+\tilde s_2-n/2}}$ and
\begin{align}
\label{eq:mik41-1}
&\left\|{ v}_1{ v}_2\right\|_{{ H}_{\#}^{\tilde s_1+\tilde s_2-n/2}}
\le C_{1}\|{ v}_1\|_{{ H}_{\#}^{\tilde s_1}} \|{ v}_2\|_{{ H}_{\#}^{\tilde s_2}}.
\end{align}

(ii) If $\tilde s_2>n/2$ then  there exists a constant $C_2=C_2(\tilde s_1, \tilde s_2,n)$ such that for any ${ v}_1\in { H}_{\#}^{\tilde s_1}$ and 
${ v}_2\in { H}_{\#}^{\tilde s_2}$, we have ${ v}_1{ v}_2\in{{ H}_{\#}^{\tilde s_1}}$ and
\begin{align}
\label{eq:mik41-2}
&\left\|{ v}_1{ v}_2\right\|_{{ H}_{\#}^{\tilde s_1}}
\le C_{2}\|{ v}_1\|_{{ H}_{\#}^{\tilde s_1}} \|{ v}_2\|_{{ H}_{\#}^{\tilde s_2}}.
\end{align}
\end{theorem}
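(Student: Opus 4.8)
The plan is to reduce the assertion to the known pointwise multiplication theorem for Bessel potential spaces and to transport it to the torus by means of the norm equivalence recorded earlier (the periodic norm \eqref{eq:mik10} on $H_\#^s$ is equivalent to the standard Bessel potential norm on the cube $\T=[0,1)^n$ viewed as a bounded domain). Concretely, I would first identify $H_\#^s$ with the Bessel potential space $H^s(\T)$, note that the product of two $1$-periodic distributions is again $1$-periodic, and hence read off $\|v_1v_2\|_{H_\#^{\tilde s_1+\tilde s_2-n/2}}$ (resp. $\|v_1v_2\|_{H_\#^{\tilde s_1}}$) from the corresponding norm on the cube. Then I would invoke Theorem~1 in Section~4.6.1 of \cite{Runst-Sickel1996} with integrability indices $p_1=p_2=2$ (so that $H_\#^{\tilde s_j}=F^{\tilde s_j}_{2,2}$), in the form valid on bounded domains. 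This yields exactly the two regimes of the statement: when the higher smoothness $\tilde s_2$ lies below the critical value $n/2$ one loses $n/2$ derivatives and lands in $H_\#^{\tilde s_1+\tilde s_2-n/2}$, whereas when $\tilde s_2>n/2$ the space $H_\#^{\tilde s_2}$ embeds into $L_\infty$ and acts as a pointwise multiplier, preserving the lower index $\tilde s_1$. The hypotheses $\tilde s_1\le\tilde s_2$ and $\tilde s_1+\tilde s_2>0$ are precisely the admissibility conditions of that theorem; in particular they force $|\tilde s_1|\le\tilde s_2$ in case (ii) and a positive smoothness budget in case (i).

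A self-contained Fourier-analytic proof is also available and is in fact the mechanism underlying the cited result. Writing the Fourier coefficients of the product as the convolution $\widehat{v_1v_2}(\bs\xi)=\sum_{\bs\eta\in\Z^n}\hat v_1(\bs\eta)\hat v_2(\bs\xi-\bs\eta)$, one sets $a(\bs\eta)=\rho(\bs\eta)^{\tilde s_1}\hat v_1(\bs\eta)$ and $b(\bs\eta)=\rho(\bs\eta)^{\tilde s_2}\hat v_2(\bs\eta)$, so that $\|a\|_{\ell_2}=\|v_1\|_{H_\#^{\tilde s_1}}$ and $\|b\|_{\ell_2}=\|v_2\|_{H_\#^{\tilde s_2}}$, and reduces the claim to the weighted bilinear bound
\begin{align*}
\Big\|\sum_{\bs\eta\in\Z^n} K(\bs\xi,\bs\eta)\,a(\bs\eta)\,b(\bs\xi-\bs\eta)\Big\|_{\ell_2(\bs\xi)}
\le C\,\|a\|_{\ell_2}\,\|b\|_{\ell_2},\qquad
K(\bs\xi,\bs\eta):=\frac{\rho(\bs\xi)^{s_0}}{\rho(\bs\eta)^{\tilde s_1}\,\rho(\bs\xi-\bs\eta)^{\tilde s_2}},
\end{align*}
where $s_0$ equals $\tilde s_1+\tilde s_2-n/2$ in case (i) and $\tilde s_1$ in case (ii). The standard way to establish this is to split $\Z^n$ into the regions $\{|\bs\eta|\le|\bs\xi-\bs\eta|\}$ and $\{|\bs\eta|\ge|\bs\xi-\bs\eta|\}$, on each of which the larger frequency controls $\rho(\bs\xi)$ via $\rho(\bs\xi)\le C\max\{\rho(\bs\eta),\rho(\bs\xi-\bs\eta)\}$; after this reduction one is left with two discrete convolution operators to which a Schur/Young test applies, the summability of the residual weight being guaranteed exactly by $\tilde s_2<n/2$ (case (i)) or $\tilde s_2>n/2$ (case (ii)) together with $\tilde s_1+\tilde s_2>0$.

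I expect the main obstacle to be the careful bookkeeping of the exponent signs in the direct route, and equivalently, in the reduction route, the verification that the admissibility hypotheses of the cited theorem coincide with $\tilde s_1\le\tilde s_2$, $\tilde s_1+\tilde s_2>0$, $\tilde s_2\ne n/2$, and that the theorem transfers from $\R^n$ to the periodic/cube setting unchanged. In case (i) the crux is that the weight sums of the form $\sum_{\bs\eta\in\Z^n}\rho(\bs\eta)^{-2t}$ arising after the Cauchy--Schwarz splitting converge, which happens precisely when $t>n/2$; this is exactly where the loss of $n/2$ derivatives originates, and the borderline $\tilde s_2=n/2$ is excluded because the corresponding sum diverges, matching the omission of that endpoint in the statement.
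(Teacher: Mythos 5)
Your first route coincides with the paper's own proof: the paper establishes this theorem purely by citing Theorem 1 in Section 4.6.1 of \cite{Runst-Sickel1996} together with the equivalence of the periodic norms \eqref{eq:mik10} with the standard Bessel-potential norms on the cube, which is exactly your reduction with $p_1=p_2=2$ and $H_\#^{s}=F^{s}_{2,2}$, including the correct matching of the hypotheses $\tilde s_1\le\tilde s_2$, $\tilde s_1+\tilde s_2>0$, $\tilde s_2\ne n/2$ to the two regimes. The supplementary Fourier-convolution/Schur-test argument you sketch is a reasonable account of the mechanism behind the cited result, but it remains a sketch and is not part of the paper's argument; the citation route is the operative proof and it is sound.
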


Theorem \ref{th:mik3s1s2} immediately leads to the following result.
\begin{theorem}\label{th:mik3}
Let $n\ge 2$.

(i) If \, $0<s<n/2$
then the quadratic operators
\begin{align}
\label{eq:mik37-}
&{\bs{\mathit B}}:{\mathbf H}_{\#}^{s}\to {\mathbf H}_{\#}^{2s-1-n/2},\\
\label{eq:mik37}
&{\bs{\mathit B}}:\dot{\mathbf H}_{\#\sigma}^{s}\to \dot{\mathbf H}_{\#}^{2s-1-n/2}
\end{align}
are well defined, continuous and bounded, i.e., there exists $C_{n,s}>0$ such that
\begin{align}
\label{eq:mik38}
&\left\|\bs{\mathit B} {\mathbf w}\right\|_{{\mathbf H}_{\#}^{2s-1-n/2}}
\le  C_{n,s}\|{\mathbf w}\|^2_{{\mathbf H}_{\#}^{s}} 
\quad\forall\ {\mathbf w}\in {\mathbf H}_{\#}^{s}.
\end{align}

(ii) If $s>n/2$ then  the  quadratic operators
\begin{align}\label{eq:mik39-}
&{\bs{\mathit B}}:{\mathbf H}_{\#}^{s}\to {\mathbf H}_{\#}^{s-1},\\
\label{eq:mik39}
&{\bs{\mathit B}}:\dot{\mathbf H}_{\#\sigma}^{s}\to \dot{\mathbf H}_{\#}^{s-1}
\end{align}
are well defined, continuous and bounded, i.e., there exists $C_{n,s}>0$ such that
\begin{align}
\label{eq:mik40}
&\left\|\bs{\mathit B} {\mathbf w}\right\|_{{\mathbf H}_\#^{s-1}}
\le  C_{n,s}\|{\mathbf w}\|^2_{{\mathbf H}_{\#}^{s}} 
\quad\forall\ {\mathbf w}\in {\mathbf H}_{\#}^{s}.
\end{align}
\end{theorem}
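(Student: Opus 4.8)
The plan is to reduce both parts to the scalar multiplication estimate of Theorem \ref{th:mik3s1s2}, applied component-wise, and then to upgrade boundedness to continuity via the underlying bilinear structure. First I would write the advection term out component-wise as $(\bs{\mathit B}\mathbf{w})_k = w_j\,\partial_j w_k$ (summation over $j$), so that each scalar summand is a product of a factor $w_j\in H_\#^s$ and a factor $\partial_j w_k\in H_\#^{s-1}$ (the latter because differentiation maps $H_\#^s$ continuously into $H_\#^{s-1}$, cf. \eqref{eq:mik13}). I would then invoke Theorem \ref{th:mik3s1s2} with $\tilde s_1 = s-1$ and $\tilde s_2 = s$, so that $\tilde s_1\le\tilde s_2$, checking the two hypotheses $\tilde s_1+\tilde s_2 = 2s-1>0$ and the position of $\tilde s_2=s$ relative to $n/2$.

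For part (ii), where $s>n/2$, the branch $\tilde s_2=s>n/2$ of Theorem \ref{th:mik3s1s2}(ii) applies directly and places each product in $H_\#^{\tilde s_1}=H_\#^{s-1}$, with $\|w_j\partial_j w_k\|_{H_\#^{s-1}}\le C\|w_j\|_{H_\#^s}\|\partial_j w_k\|_{H_\#^{s-1}}\le C'\|w_j\|_{H_\#^s}\|w_k\|_{H_\#^s}$; the side condition $2s-1>0$ is automatic since $s>n/2\ge 1$. Summing over the finitely many index pairs and absorbing their count into $C_{n,s}$ yields \eqref{eq:mik40} and hence operator \eqref{eq:mik39-}. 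For the solenoidal operator \eqref{eq:mik39} I would additionally note that $\bs{\mathit B}\mathbf{w}$ has vanishing mean: on $\dot{\mathbf H}_{\#\sigma}^s$ the divergence-free condition lets one write $(\bs{\mathit B}\mathbf{w})_k=\partial_j(w_jw_k)$, a pure derivative whose zeroth Fourier coefficient vanishes, so $\bs{\mathit B}\mathbf{w}\in\dot{\mathbf H}_\#^{s-1}$. For part (i), where $0<s<n/2$, the branch $\tilde s_2=s<n/2$ of Theorem \ref{th:mik3s1s2}(i) places each product in $H_\#^{\tilde s_1+\tilde s_2-n/2}=H_\#^{2s-1-n/2}$, which is exactly the target exponent, giving \eqref{eq:mik38} and operator \eqref{eq:mik37-}; the solenoidal operator \eqref{eq:mik37} and the zero-mean of its image follow as before.

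The hard part, and the step I expect to be the main obstacle, is verifying the hypothesis $\tilde s_1+\tilde s_2=2s-1>0$ in part (i), since it fails on the sub-range $0<s\le 1/2$, which is nonempty for every $n\ge 2$. For the solenoidal operators this is circumvented cleanly by the divergence form $(\bs{\mathit B}\mathbf{w})_k=\partial_j(w_jw_k)$: one applies Theorem \ref{th:mik3s1s2}(i) to the product $w_jw_k$ of two $H_\#^s$ factors, i.e. with $\tilde s_1=\tilde s_2=s$, whose sum condition $2s>0$ holds throughout $0<s<n/2$, obtaining $w_jw_k\in H_\#^{2s-n/2}$ and then differentiating into $H_\#^{2s-1-n/2}$. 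This reroutes the estimate so that only $2s>0$ rather than $2s-1>0$ is needed, covering the full range for the divergence-free case that is used in the applications; for the non-solenoidal operator \eqref{eq:mik37-} the naive pairing $(s-1,s)$ requires $s>1/2$, and the remaining sub-range has to be handled either by restricting to $s>1/2$ or by appealing to the finer product estimates of \cite{Runst-Sickel1996} underlying Theorem \ref{th:mik3s1s2}.

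Finally, continuity of each quadratic map follows from its boundedness through the identity $\bs{\mathit B}\mathbf{w}-\bs{\mathit B}\mathbf{w}_0=\beta(\mathbf{w}-\mathbf{w}_0,\mathbf{w})+\beta(\mathbf{w}_0,\mathbf{w}-\mathbf{w}_0)$, where $\beta(\mathbf{a},\mathbf{b}):=(\mathbf{a}\cdot\nabla)\mathbf{b}$ is the associated bilinear operator. The same product estimates above bound $\beta$ in the relevant norms, so $\bs{\mathit B}\mathbf{w}\to\bs{\mathit B}\mathbf{w}_0$ as $\mathbf{w}\to\mathbf{w}_0$, establishing continuity in each of \eqref{eq:mik37-}, \eqref{eq:mik37}, \eqref{eq:mik39-}, and \eqref{eq:mik39}.
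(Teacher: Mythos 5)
Your proposal follows essentially the same route as the paper's proof: component-wise reduction of $(\bs{\mathit B}\mathbf w)_k=w_j\partial_jw_k$ to the multiplication estimates of Theorem \ref{th:mik3s1s2} with the pairing $\tilde s_1=s-1$, $\tilde s_2=s$; zero mean of $\bs{\mathit B}\mathbf w$ for solenoidal $\mathbf w$ via the divergence structure; and continuity from boundedness via the splitting $\bs{\mathit B}\mathbf w-\bs{\mathit B}\mathbf w'=\big((\mathbf w-\mathbf w')\cdot\nabla\big)\mathbf w+(\mathbf w'\cdot\nabla)(\mathbf w-\mathbf w')$, which is literally the identity in the paper's proof. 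The genuine difference is your handling of part (i) on the sub-range $0<s\le 1/2$. The paper's entire argument for part (i) is the sentence that Theorem \ref{th:mik3s1s2}(i) implies estimate \eqref{eq:mik38}, and this application tacitly requires $\tilde s_1+\tilde s_2=2s-1>0$, i.e.\ $s>1/2$ --- exactly the hypothesis you flag as failing on $(0,1/2]$. So the obstacle you identified is a real (if minor) gap in the paper's own proof, not an artifact of your reading. Your repair for the solenoidal operator \eqref{eq:mik37} --- writing $(\bs{\mathit B}\mathbf w)_k=\partial_j(w_jw_k)$, applying Theorem \ref{th:mik3s1s2}(i) with $\tilde s_1=\tilde s_2=s$ (so only $2s>0$ is needed), and then differentiating --- is correct, covers the whole range $0<s<n/2$, and suffices for every use of the theorem in the paper, since the bootstrap in Theorem \ref{th:mik5} applies $\bs{\mathit B}$ only to divergence-free fields (and this matters precisely for $n=2$, where $s_1$ there may lie in $(0,1/2]$); for such small $s$ the divergence form is in fact the natural definition of $\bs{\mathit B}\mathbf w$, as the pointwise pairing of $H_\#^{s}$ with $H_\#^{s-1}$ is not separately meaningful. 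Your caveat about the full-space operator \eqref{eq:mik37-} is also warranted: there the extra term $(\mathrm{div}\,\mathbf w)\,w_k$ blocks the same trick, so \eqref{eq:mik37-} is justified only for $s>1/2$ by these methods, and restricting the range (as you suggest) is the honest resolution. In short, your proof is correct, coincides with the paper's where the paper's works, and is more careful than the paper's on the low-regularity range.
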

\begin{proof}
If a function $\mathbf w$ is periodic, then evidently the function  $\bs{\mathit B}\mathbf w$ is periodic as well.

(i) Let  $0<s<n/2$.
Theorem \ref{th:mik3s1s2}(i) implies estimate \eqref{eq:mik38} and then the boundedness of operator \eqref{eq:mik37-}.

Further, if ${\mathbf u}\in \dot{\mathbf H}_{\#\sigma}^{s}$ then due to the periodicity,
$$
\langle \bs{\mathit B}\mathbf u,1\rangle_{\T}
=\langle {\mathbf u}\cdot \nabla ){\mathbf u} ,1\rangle_{\T}
=-\langle ({\rm div}\, {\mathbf u}){\mathbf u},1 \rangle_{\T}
=\mathbf 0.
$$
Together with estimate \eqref{eq:mik38} this implies that quadratic operator \eqref{eq:mik37} is well defined and bounded.

Let ${\mathbf w},{\mathbf w}'\in {\mathbf H}_{\#\sigma}^{1}$.
Then by  \eqref{eq:mik41-1} we obtain
\begin{align*}
\big\|\bs{\mathit B}{\mathbf w}-\bs{\mathit B}{\mathbf w}'\big\|_{{\mathbf H}_{\#}^{2s-1-n/2}}
&\le\left\|({\mathbf w}\cdot \nabla ){\mathbf w}-({\mathbf w}'\cdot \nabla ){\mathbf w}'\right\|_{{\mathbf H}_{\#}^{2s-1-n/2}}\nonumber\\
&\le\left\|(({\mathbf w}-{\mathbf w}')\cdot \nabla ){\mathbf w} 
+ ({\mathbf w}'\cdot \nabla )({\mathbf w}-{\mathbf w}')\right\|_{{\mathbf H}_{\#}^{2s-1-n/2}}\nonumber\\
&\le  C_{n,s}\left\|{\mathbf w}-{\mathbf w}'\right\|_{{\mathbf H}_{\#}^{s}}\left( \|{\mathbf w}\|_{{\mathbf H}_{\#}^{s}}
+ \|{\mathbf w}'\|_{{\mathbf H}_{\#}^{s}}
\right).
\end{align*}
This estimate shows that operator \eqref{eq:mik37-} and \eqref{eq:mik37} are continuous. 

(ii) Let  $s>n/2$.
Theorem \ref{th:mik3s1s2}(ii) implies estimate \eqref{eq:mik40}. Then by the same arguments as in item (i), one can prove that operators \eqref{eq:mik39-} and \eqref{eq:mik39} are also well defined, bounded and continuous.
\end{proof}

\subsubsection{} 
The following assertion is essentially an adaptation of {\mg a} result from  \cite[Lemma and Remark 2.3(ii)]{Gerhardt1979} to the periodic setting for the particular case of $L_2$-based Sobolev spaces.

\begin{lemma}\label{L6.4}
Let $n\ge 2$ and $\theta\in I_n$.
Then for any $u\in L_{\theta\#}$ and any $\epsilon>0$ there exists a constant $c_\epsilon(u)>0$ depending only on $u$, $n$, and $\epsilon$, such that 
\begin{align}\label{E6.10}
\int_\T|u|^2\,|w|^2 dx\le \epsilon \|w\|^2_{H^1_{\#}}+c_\epsilon(u) \|w\|^2_{L_{2\#}}\quad\forall\ w\in H^1_{\#}.
\end{align}
\end{lemma}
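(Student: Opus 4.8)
The plan is to reduce the estimate to the Sobolev embedding \eqref{E7.9} by splitting $u$ into a bounded part and a part with arbitrarily small $L_{\theta\#}$-norm. First I would fix $\theta\in I_n$ and recall the notation $q_\theta:=2\theta/(\theta-2)$. The point of this exponent is that $|u|^2$ and $|w|^2$ are exactly H\"older-conjugate: since $\tfrac{2}{\theta}+\tfrac{\theta-2}{\theta}=1$, H\"older with the exponents $\theta/2$ and $\theta/(\theta-2)$ gives, for any scalar $a\in L_{\theta\#}$ and $w\in H_\#^1$,
\[
\int_\T|a|^2\,|w|^2\,dx\le \||a|^2\|_{L_{\theta/2\#}}\,\||w|^2\|_{L_{\theta/(\theta-2)\#}}=\|a\|_{L_{\theta\#}}^2\,\|w\|_{L_{q_\theta\#}}^2 ,
\]
and $|w|\in L_{q_\theta\#}$ is precisely the integrability furnished by the Sobolev embedding \eqref{E7.9} (applied to the scalar $w$), so that $\|w\|_{L_{q_\theta\#}}\le C_{q_\theta\#}\|w\|_{H_\#^1}$, where $C_{q_\theta\#}=C_{2\theta/(\theta-2)\#}$ is the embedding constant.

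Next, because $\theta<\infty$ throughout $I_n$, the space $\mathcal C^\infty_\#$ of smooth (hence bounded) periodic functions is dense in $L_{\theta\#}$. Given $\epsilon>0$, I would choose $v\in\mathcal C^\infty_\#$ with $\|u-v\|_{L_{\theta\#}}$ so small that $2C_{q_\theta\#}^2\|u-v\|_{L_{\theta\#}}^2\le\epsilon$. Using the elementary bound $|u|^2\le 2|v|^2+2|u-v|^2$, I split
\[
\int_\T|u|^2\,|w|^2\,dx\le 2\int_\T|v|^2\,|w|^2\,dx+2\int_\T|u-v|^2\,|w|^2\,dx .
\]
For the first term $v$ is bounded, giving $2\int_\T|v|^2|w|^2\,dx\le 2\|v\|_{L_{\infty\#}}^2\|w\|_{L_{2\#}}^2$. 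For the second, the H\"older--Sobolev estimate above applied to $a=u-v$ yields $2\int_\T|u-v|^2|w|^2\,dx\le 2C_{q_\theta\#}^2\|u-v\|_{L_{\theta\#}}^2\|w\|_{H_\#^1}^2\le\epsilon\|w\|_{H_\#^1}^2$. Setting $c_\epsilon(u):=2\|v\|_{L_{\infty\#}}^2$ then produces exactly \eqref{E6.10}.

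The argument is essentially routine once the splitting is fixed; the only genuine points to get right are that the density of bounded functions in $L_{\theta\#}$ relies on $\theta<\infty$ (which holds for every $\theta\in I_n$), and that the H\"older exponent $q_\theta$ coincides with the embedding exponent in \eqref{E7.9}. I expect the main (minor) obstacle to be the bookkeeping of the constant: one must select $v$ first, depending on $u$, $n$, and $\epsilon$, and only then define $c_\epsilon(u)=2\|v\|_{L_{\infty\#}}^2$, so that $c_\epsilon(u)$ legitimately depends only on $u$, $n$, $\epsilon$ and is independent of $w$, as required.
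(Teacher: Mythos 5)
Your proof is correct, but it takes a genuinely different route from the paper. The paper proves Lemma \ref{L6.4} by contradiction: assuming the estimate fails for some $\epsilon>0$ and every $c$, it normalizes the offending functions $\widetilde w_c=w_c/\|w_c\|_{H^1_{\#}}$, observes via the Sobolev embedding that $\|\widetilde w_c\|_{L_{q_\theta\#}}$ is bounded, deduces $\|\widetilde w_c\|_{L_{2\#}}\to 0$, extracts a subsequence with $|\widetilde w_c|^2$ converging weakly in $L_{q_\theta/2\#}$ to a limit that must be zero, and then tests against $|u|^2\in L_{\theta/2\#}$ to force $\epsilon=0$, a contradiction. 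Your argument instead is direct and constructive: approximate $u$ in $L_{\theta\#}$ by a bounded (smooth) $v$, split $|u|^2\le 2|v|^2+2|u-v|^2$, absorb the bounded part into the $\|w\|_{L_{2\#}}^2$ term and the small part into $\epsilon\|w\|_{H^1_{\#}}^2$ via the same H\"older--Sobolev pairing $\theta/2$, $\theta/(\theta-2)$ used in the paper. Your approach avoids weak compactness entirely and yields an explicit constant $c_\epsilon(u)=2\|v\|_{L_{\infty\#}}^2$, which the paper's non-constructive argument does not provide; what the paper's contradiction scheme buys is that it transfers almost verbatim to the stronger Lemma \ref{L6.5}, where uniformity of the constant over a compact subset $K_{\theta\#}$ of $L_{\theta\#}$ is required (your splitting argument could also be adapted there, e.g.\ via a finite $\epsilon$-net, but that adaptation is not immediate). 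Both the density of $\mathcal C^\infty_\#$ in $L_{\theta\#}$ (valid since $\theta<\infty$ on all of $I_n$) and the identification of the H\"older exponent $q_\theta=2\theta/(\theta-2)$ with the embedding exponent in \eqref{E7.9} are handled correctly in your write-up, so there is no gap.
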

\begin{proof}
Let us {\mg recall} that ${\mg q_\theta}:=2\theta/(\theta-2)>2$, see \eqref{In} and \eqref{2theta}.
Due to the Sobolev embedding theorem, the space $H^1_{\#}$ is continuously embedded in $L_{{\mg q_\theta}\#}$ and hence $w\in L_{{\mg q_\theta}\#}$. 
By the H\"older inequality this implies that $|u|^2\,|w|^2\in L_{1\#}$ and integral in the left hand side of \eqref{E6.10} is bounded.

Let us employ the contradiction argument and assume that there exist $u\in L_{\theta\#}$ and $\epsilon>0$ such that for any constant $c>0$ there exists $w_c$, such that 
\begin{align}\label{E6.11}
\int_\T|u|^2\,|w_c|^2 dx> \epsilon \|w_c\|^2_{H^1_{\#}}+c \|w_c\|^2_{L_{2\#}}.
\end{align}
Inequality \eqref{E6.11} does not hold if $w_c=0$ a.e. on $\T$, hence we can assume that $\|w_c\|_{H^1_{\#}}\ne 0$.
Inequality \eqref{E6.11} then implies that for any $c$ it will also hold for  
$\widetilde w_c=w_c/\|w_c\|_{H^1_{\#}}$. 
We evidently have 
$\|\widetilde w_c\|_{H^1_{\#}}=1$ and by the Sobolev embedding theorem $\|\widetilde w_c\|_{L_{{\mg q_\theta}\#}}$ is bounded by a constant that does not depend on $c$.
By the H\"older inequality for $u\in L_{\theta\#}$ and $\widetilde w_c\in L_{{\mg q_\theta}\#}$,  we have,
\begin{align}\label{E6.12}
\|u\|^2_{L_{\theta\#}}\|\widetilde w_c\|^2_{L_{{\mg q_\theta}\#}}\ge\int_\T|u|^2\,|\widetilde w_c|^2 dx> \epsilon 
+c \|\widetilde w_c\|^2_{L_{2\#}}.
\end{align}
Choosing $c\in\N$ and taking limit of \eqref{E6.12} as $c\to\infty$, the inequality implies that 
\begin{align}\label{E6.12a}
{\mg\| |\widetilde w_c|^2\|_{L_{1\#}}=\|\widetilde w_c\|^2_{L_{2\#}}}\to 0\quad \mbox{as }c\to\infty.
\end{align}
{\mg Note that ${\mg q_\theta}/2>1$.}
The boundedness of  sequence $\|\widetilde w_c\|_{L_{{\mg q_\theta}\#}}$ 
is equivalent to the boundedness of the sequence 
$\| |\widetilde w_c|^2\|_{L_{{\mg q_\theta}/2\#}}$ and
implies that there exists a subsequence of $\{|\widetilde w_c|^2\}$ weakly converging in $L_{{\mg q_\theta}/2\#}$ {\mg (and hence in $L_{1\#}$) to a function $W\in L_{{\mg q_\theta}/2\#}$, 
and then by \eqref{E6.12a}, $W=0$.}
Then the integral in inequality \eqref{E6.12} converges to zero, and the inequality implies that $\epsilon=0$,
which contradicts the 
assumption $\epsilon>0$.
This implies \eqref{E6.10}.
\end{proof}

Let us now prove a stronger version of Lemma~\ref{L6.4}, cf. \cite[Remark 2.3(iii)]{Gerhardt1979}.
\begin{lemma}\label{L6.5}
Let $n\ge 2$  and $\theta\in I_n$.
Let $K_{\theta\#}$ be a compact subset of $L_{\theta\#}$.
Then for 
any $\delta>0$ there exist {\mg constants $\widetilde C_\delta(K_{\theta\#})>0$ and}
$C_\delta(K_{\theta\#})>0$ {\mg(}depending only on $n$, $\theta$, $K_{\theta\#}$ and $\delta${\mg)} such that for all $w\in H^1_{\#}$ and $u\in K_{\theta\#}$,
\begin{align}\label{E6.10an}
\|uw\|_{L_{2\#}}
&\le \delta\|w\|_{H^1_{\#}}+\widetilde C_\delta{\mg(K_{\theta\#})}\|u\|_{L_{\theta\#}}\|w\|_{L_{2\#}}
\\
\label{E6.10b}
&\le \delta \|w\|_{H^1_{\#}}+C_\delta(K_{\theta\#})\|w\|_{L_{2\#}}.
\end{align}
\end{lemma}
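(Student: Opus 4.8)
The plan is to derive both inequalities from Lemma~\ref{L6.4} together with the boundedness of $K_{\theta\#}$ and a covering argument, and then to recover the factor $\|u\|_{L_{\theta\#}}$ in \eqref{E6.10an} by a case distinction on the size of $\|u\|_{L_{\theta\#}}$. Throughout I write $q_\theta:=2\theta/(\theta-2)$ and denote by $C_{2\theta/(\theta-2)\#}$ the Sobolev embedding constant from \eqref{E7.9}, so that H\"older's inequality (with $1/2=1/\theta+1/q_\theta$) and \eqref{E7.9} give the basic bound $\|uw\|_{L_{2\#}}\le\|u\|_{L_{\theta\#}}\|w\|_{L_{q_\theta\#}}\le C_{2\theta/(\theta-2)\#}\|u\|_{L_{\theta\#}}\|w\|_{H^1_{\#}}$ for every $u\in L_{\theta\#}$ and $w\in H^1_{\#}$. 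Since $K_{\theta\#}$ is compact it is bounded, so $M:=\sup_{u\in K_{\theta\#}}\|u\|_{L_{\theta\#}}<\infty$, and \eqref{E6.10b} will follow from \eqref{E6.10an} at once by setting $C_\delta(K_{\theta\#}):=M\,\widetilde C_\delta(K_{\theta\#})$; hence the whole task reduces to \eqref{E6.10an}.

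First I would prove the auxiliary uniform estimate that for each $\delta>0$ there is a constant $A_{K,\delta}>0$ (depending only on $n,\theta,K_{\theta\#},\delta$) with $\|uw\|_{L_{2\#}}\le\delta\|w\|_{H^1_{\#}}+A_{K,\delta}\|w\|_{L_{2\#}}$ for all $u\in K_{\theta\#}$ and $w\in H^1_{\#}$. To this end, taking square roots in \eqref{E6.10} and using $\sqrt{a+b}\le\sqrt a+\sqrt b$ turns Lemma~\ref{L6.4} into $\|u_0 w\|_{L_{2\#}}\le\sqrt\epsilon\,\|w\|_{H^1_{\#}}+\sqrt{c_\epsilon(u_0)}\,\|w\|_{L_{2\#}}$ for any fixed $u_0\in L_{\theta\#}$. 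Using the total boundedness of $K_{\theta\#}$, I would cover it by finitely many balls $B(u_j,\eta)$, $j=1,\dots,N$, with centres $u_j\in K_{\theta\#}$ and radius $\eta:=\delta/(2C_{2\theta/(\theta-2)\#})$, and apply the previous line at each $u_j$ with $\epsilon=(\delta/2)^2$. For an arbitrary $u\in K_{\theta\#}$, choosing $j$ with $\|u-u_j\|_{L_{\theta\#}}\le\eta$ and writing $\|uw\|_{L_{2\#}}\le\|(u-u_j)w\|_{L_{2\#}}+\|u_j w\|_{L_{2\#}}$, the first summand is bounded by $C_{2\theta/(\theta-2)\#}\eta\,\|w\|_{H^1_{\#}}=\tfrac\delta2\|w\|_{H^1_{\#}}$ via the basic bound, and the second by $\tfrac\delta2\|w\|_{H^1_{\#}}+\sqrt{c_{(\delta/2)^2}(u_j)}\,\|w\|_{L_{2\#}}$; thus the claim holds with $A_{K,\delta}:=\max_{1\le j\le N}\sqrt{c_{(\delta/2)^2}(u_j)}$.

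It then remains to upgrade this uniform additive constant into the factor $\|u\|_{L_{\theta\#}}$. Fix $\delta>0$ and set $m_0:=\delta/C_{2\theta/(\theta-2)\#}$. If $\|u\|_{L_{\theta\#}}\le m_0$, the basic bound already yields $\|uw\|_{L_{2\#}}\le C_{2\theta/(\theta-2)\#}\|u\|_{L_{\theta\#}}\|w\|_{H^1_{\#}}\le\delta\|w\|_{H^1_{\#}}$, which is \eqref{E6.10an} for any choice of $\widetilde C_\delta(K_{\theta\#})$. If $\|u\|_{L_{\theta\#}}> m_0$, the auxiliary estimate gives $\|uw\|_{L_{2\#}}\le\delta\|w\|_{H^1_{\#}}+A_{K,\delta}\|w\|_{L_{2\#}}$, and since $1<\|u\|_{L_{\theta\#}}/m_0$ I may replace the additive constant by $A_{K,\delta}\le(A_{K,\delta}/m_0)\|u\|_{L_{\theta\#}}$; both cases therefore give \eqref{E6.10an} with $\widetilde C_\delta(K_{\theta\#}):=A_{K,\delta}/m_0=A_{K,\delta}\,C_{2\theta/(\theta-2)\#}/\delta$. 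As observed above, \eqref{E6.10b} follows with $C_\delta(K_{\theta\#}):=M\,\widetilde C_\delta(K_{\theta\#})$.

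The step I expect to be the main obstacle is producing the explicit factor $\|u\|_{L_{\theta\#}}$ in \eqref{E6.10an} uniformly over $K_{\theta\#}$, since the covering argument only delivers a uniform additive constant and $0$ may belong to $K_{\theta\#}$, where no positive lower bound on $\|u\|_{L_{\theta\#}}$ is available. The resolution is precisely the split at the threshold $m_0$: small-norm elements are controlled directly by H\"older and the embedding \eqref{E7.9} with no low-order term at all, while elements bounded away from $0$ allow the uniform additive constant to be rewritten as a multiple of $\|u\|_{L_{\theta\#}}$. For subcritical exponents (that is, $n=2$ with any $\theta\in I_n$, or $n\ge3$ with $\theta>n$) one could alternatively avoid Lemma~\ref{L6.4} altogether, bounding $\|w\|_{L_{q_\theta\#}}$ by the Gagliardo--Nirenberg interpolation inequality and Young's inequality; only the critical case $\theta=n$, $n\ge3$ genuinely requires the compactness input of Lemma~\ref{L6.4}.
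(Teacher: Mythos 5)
Your proof is correct, but it takes a genuinely different route from the paper's. The paper does not reduce Lemma~\ref{L6.5} to Lemma~\ref{L6.4}; instead it reruns the contradiction argument of Lemma~\ref{L6.4} with $u$ varying over $K_{\theta\#}$: assuming the quadratic estimate $\int_\T|u|^2|w|^2\,dx\le\epsilon\|w\|^2_{H^1_{\#}}+\tilde c_\epsilon\|u\|^2_{L_{\theta\#}}\|w\|^2_{L_{2\#}}$ fails, it normalises $\widetilde w_c=w_c/\|w_c\|_{H^1_{\#}}$, shows $|\widetilde w_c|^2\rightharpoonup 0$ weakly in $L_{q_\theta/2\#}$ along a subsequence, and uses the compactness of $K_{\theta\#}$ to extract a subsequence of $\{|u_c|^2\}$ converging \emph{strongly} in $L_{\theta/2\#}$, so that the weak--strong pairing forces $\epsilon=0$, a contradiction; the multiplicative factor $\|u\|^2_{L_{\theta\#}}$ is thus built into the estimate from the start, and \eqref{E6.10an}--\eqref{E6.10b} follow by taking square roots and using boundedness of $K_{\theta\#}$. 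You instead invoke Lemma~\ref{L6.4} as a black box only at the finitely many centres $u_j$ of an $\eta$-net (total boundedness of $K_{\theta\#}$), absorb the perturbation $\|(u-u_j)w\|_{L_{2\#}}$ via H\"older and the embedding \eqref{E7.9}, and then recover the factor $\|u\|_{L_{\theta\#}}$ by the threshold split at $m_0=\delta/C_{2\theta/(\theta-2)\#}$, the small-norm case being handled by H\"older/Sobolev alone with no lower-order term; this case split correctly resolves the issue that elements of $K_{\theta\#}$ may have arbitrarily small norm. Both arguments are complete. Yours is more modular (no second weak-convergence contradiction argument), makes the constant's provenance explicit ($\widetilde C_\delta=\max_j\sqrt{c_{(\delta/2)^2}(u_j)}\;C_{2\theta/(\theta-2)\#}/\delta$), and your closing observation that compactness is genuinely needed only in the critical case $\theta=n$, $n\ge 3$ (boundedness plus interpolation sufficing in the subcritical cases) is correct and not made in the paper; what the paper's approach buys is a one-shot proof with no case distinction, structurally parallel to the proof of Lemma~\ref{L6.4}.
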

\begin{proof}
Let us first prove that for any $\epsilon>0$, there exist a constant $\tilde c_\epsilon>0$, depending only on $n$, $\theta$, and $\epsilon$, such that
\begin{align}\label{E6.10a}
\int_\T|u|^2\,|w|^2 dx
&\le \epsilon \|w\|^2_{H^1_{\#}}+\tilde c_\epsilon\|u\|^2_{L_{\theta\#}}\|w\|^2_{L_{2\#}}.
\end{align}
Let us recall that ${\mg q_\theta}:=2\theta/(\theta-2)>2$, cf. \eqref{In} and \eqref{2theta}.
For $u=0$ or $w=0$  inequality \eqref{E6.10a} evidently holds. 
To prove that it holds also for $\|u\|_{L_{\theta\#}}\ne 0$ and $\|w\|_{H^1_{\#}}\ne 0$ let us assume the contrary, namely, that there exists $\epsilon>0$ such  that for any constant $c>0$ there exists $u_c\in K_{\theta\#}$ and $w_c\in H^1_{\#}$ such that
\begin{align}\label{E6.17}
\int_\T |u_c|^2\,|w_c|^2 dx> \epsilon\|w_c\|^2_{H^1_{\#}}+c\|u_c\|^2_{L_{\theta\#}} \|w_c\|^2_{L_{2\#}}.
\end{align}
Inequality \eqref{E6.17} does not hold if  $w_c=0$, hence we can assume that $\|w_c\|_{H^1_{\#}}\ne 0$.
Inequality \eqref{E6.17} then implies that for any $c$ it will also hold for  
$\widetilde w_c=w_c/\|w_c\|_{H^1_{\#}}$. 
We evidently have 
$\|\widetilde w_c\|_{H^1_{\#}}=1$, and by the Sobolev embedding theorem all $\|\widetilde w_c\|_{L_{{\mg q_\theta}\#}}$ are bounded by a constant that does not depend on $c$.
By the H\"older inequality for $u_c\in L_{\theta\#}$ and $\widetilde w_c\in L_{{\mg q_\theta}\#}$,  inequality \eqref{E6.17} reduces to
\begin{align}\label{E7.32}
\|u_c\|^2_{L_{\theta\#}}\|\widetilde w_c\|^2_{L_{{\mg q_\theta}\#}}\ge\int_\T|u_c|^2\,|\widetilde w_c|^2 dx> \epsilon 
+c \|u_c\|^2_{L_{\theta\#}}\|\widetilde w_c\|^2_{L_{2\#}}.
\end{align}

Let us choose the sequence $\{c\}=\N$. 
Inequality \eqref{E7.32} implies that
\begin{align}\label{E6.19}
\| \tilde w_c\|^2_{L_{{\mg q_\theta}\#}}> c \|\tilde w_c\|^2_{L_{2\#}}.
\end{align}
Since all $\|\widetilde w_c\|_{L_{{\mg q_\theta}\#}}$ are bounded by a constant independent of $c$, inequality \eqref{E6.19}
implies that 
\begin{align}\label{E7.34}
\|\widetilde w_c\|_{L_{2\#}}\to 0\quad \mbox{as }c\to\infty.
\end{align}
The boundedness of  sequence $\|\widetilde w_c\|_{L_{{\mg q_\theta}\#}}$ 
is equivalent to the boundedness of sequence 
$\| |\widetilde w_c|^2\|_{L_{{\mg q_\theta}/2\#}}$ and
implies that there exists a subsequence of $\{|\widetilde w_c|^2\}$ weakly converging in $L_{{\mg q_\theta}/2\#}$ 
{\mg (and hence in $L_{1\#}$) to a function $W\in L_{{\mg q_\theta}/2\#}$, 
and then by \eqref{E7.34}, $W=0$. Here we took into account that ${\mg q_\theta}/2>1$.}
On the other hand, since the set $K_{\theta\#}$ is a compact subset of $L_{\theta\#}$, there exists a subsequence of $\{|u_c|^2\}$ (with the indices $c$ belonging to the subset of indices of the previous  subsequence) strongly converging in $L_{\theta/2\#}$. 

Then (see, e.g., Problem 6.33 in  \cite{Renardy-Rogers1992}) the integral in inequality \eqref{E7.32} converges to zero, and the inequality implies that $\epsilon=0$,
which contradicts the 
assumption $\epsilon>0$.
This implies \eqref{E6.10a}, which  after choosing $\epsilon=\delta^2$ and $\widetilde C_\delta=\sqrt{\tilde c_\epsilon}$ leads to \eqref{E6.10an}.
The set $K_{\theta\#}$ is compact and hence bounded in $L_{\theta\#}$, which then implies inequality \eqref{E6.10b}.
\end{proof}

Note that Lemma \ref{L6.5} particularly holds true when $K_{\theta\#}$ is a sequence converging in $L_{\theta\#}$.

\par\egroup
\end{document}